\newtheorem{theorem}{Theorem}[section]
\newtheorem{lemma}[theorem]{Lemma}
\newtheorem{corollary}[theorem]{Corollary}
\newtheorem{proposition}[theorem]{Proposition}
\newtheorem{conjecture}[theorem]{Conjecture}
\theoremstyle{definition}
\newtheorem{definition}[theorem]{Definition}
\theoremstyle{remark}
\newtheorem{remark}[theorem]{Remark}
\numberwithin{equation}{section}
\newcommand{\R}{{\mathbb R}}
\newcommand{\Q}{{\mathbb Q}}
\newcommand{\Z}{{\mathbb Z}}
\newcommand{\N}{{\mathbb N}}
\newcommand{\li}{\textnormal{li}}
\newcommand{\eps}{{\epsilon}}
\begin{document}

\title[The Erd\H os primitive set conjecture]{A proof of the Erd\H os primitive set conjecture}

\author{Jared Duker Lichtman}
\address{Mathematical Institute, University of Oxford, Oxford, OX2 6GG, UK}

\email{jared.d.lichtman@gmail.com}


\dedicatory{Dedicated to Carl Pomerance}

\begin{abstract}
A set of integers greater than 1 is primitive if no member in the set divides another. Erd\H{o}s proved in 1935 that the series $f(A) = \sum_{a\in A}1/(a \log a)$ is uniformly bounded over all choices of primitive sets $A$. Later he asked if this bound is attained for the set of prime numbers. In this article we answer in the affirmative.

As further applications of the method, we make progress towards a question of Erd\H{o}s, S\'ark\"ozy, and Szemer\'edi from 1968. We also refine the classical Davenport--Erd\H{o}s theorem on infinite divisibility chains, and extend a result of Erd\H{o}s, S\'ark\"ozy, and Szemer\'edi from 1966.

\end{abstract}

\subjclass[2010]{11B83; 11A05; 11N05; 05D40}

\keywords{primitive set, primitive sequence, divisibility chain, sets of multiples}

\maketitle

\section{Introduction}

A set of integers $A\subset \mathbb{Z}_{>1}$ is {\it primitive} if no member in $A$ divides another. For example, the integers in a dyadic interval $(x,2x]$ form a primitive set. Similarly the set of primes is primitive, along with the set $\mathbb{N}_k$ of numbers with exactly $k$ prime factors (with multiplicity), for each $k\ge1$. Another well-known example is the set of perfect numbers.\footnote{
Since Ancient Greece, a number $n$ is classified as `perfect,' `abundant,' or `deficient,' depending on whether the sum of its proper divisors equals $n$, is greater than $n$, or is less than $n$, respectively.}

The study of primitive sets emerged in the 1930s as a generalization of one special problem. A classical theorem of Davenport asserts that the set of abundant numbers has a positive asymptotic density. This was originally proved by sophisticated analytic methods, but Erd\H{o}s soon found an elementary proof by using primitive abundant numbers.\footnote{More precisely, `primitive non-deficient numbers'} The proof ideas led people to introduce the abstract definition of primitive sets and study them for their own sake. See Hall \cite{Hsetmult} or Halberstam--Roth \cite[\S 5]{HalbRoth} for detailed introductions to the subject.

There are a number of interesting and sometimes unexpected theorems about primitive sets. For instance, in 1934 Besicovitch \cite{Besicovitch} showed that the upper asymptotic density of a primitive set can be arbitrarily close to $1/2$, whereas in 1935 Behrend \cite{Behrend} and Erd\H{o}s \cite{Erdos35} proved the lower asymptotic density is always $0$. In fact, Erd\H{o}s proved the stronger result that
\begin{align*}
f(A) := \sum_{a\in A}\frac{1}{a\log a} \ < \ \infty,
\end{align*}
uniformly over all primitive sets $A$. Later Erd\H{o}s famously asked if the maximum is attained by the primes $\mathcal{P}$. This appears at least since 1974 \cite[p.11]{ErdosIndian}.

\begin{conjecture}[Erd\H{o}s primitive set conjecture] \label{conj:EPS}
For any primitive set $A$, \
$f(A) \le f(\mathcal P)$.
\end{conjecture}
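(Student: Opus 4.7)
The plan is to partition the primitive set $A$ according to the smallest prime factor of each element, and then reduce the conjecture to a local inequality at each prime. Writing $p(a)$ for the least prime factor of $a$, set $A_p := \{a \in A : p(a) = p\}$, so that $A = \bigsqcup_p A_p$ and
$$f(A) = \sum_{p} \sum_{a \in A_p} \frac{1}{a \log a}, \qquad f(\mathcal P) = \sum_p \frac{1}{p \log p}.$$
It would therefore suffice to prove the \emph{Erd\H os--Zhang inequality}
$$\sum_{a \in A_p} \frac{1}{a \log a} \ \le \ \frac{1}{p \log p} \qquad \text{for every prime } p,$$
and then sum over $p$. If $p \in A$, primitivity forces $A_p = \{p\}$ and one has equality; the interesting case is $p \notin A$.

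In that case every $a \in A_p$ factors as $a = pm$ with $m > 1$ and $p(m) \ge p$, and $B_p := \{a/p : a \in A_p\}$ is itself a primitive set of integers with least prime factor at least $p$. Since $\log(pm) = \log p + \log m$, the target inequality is equivalent to a genuine gain of a factor $\log m / \log(pm)$ at each $m \in B_p$, i.e.
$$\sum_{m \in B_p} \frac{1}{m\log p} - \sum_{m \in B_p} \frac{1}{m \log(pm)} \ge \sum_{m \in B_p} \frac{1}{m \log p} - \frac{1}{\log p},$$
which is exactly where the primitivity of $B_p$ must be exploited. The natural tool is a Mertens-type bound in the spirit of $\sum_{m \le x,\ p(m) \ge p} 1/m \ll \log x/\log p$, combined with a \emph{primitivity dividend}: because $B_p$ is primitive, the $1/m$ sum should be strictly smaller than the full sieved sum, by an amount which pays exactly for the logarithm. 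I would formalise this by introducing a weighted potential function $\omega(n)$ (a ``local'' version of $1/(n\log n)$) such that the ratio $\omega(pm)/\omega(p) \le 1$, averaging it against the characteristic function of $B_p$, and controlling the error via Buchstab iteration on the smallest prime factor.

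The main obstacle is sharpness: for small primes $p$ the sieve majorant on $\sum_{m \in B_p} 1/m$ alone is too weak — one can design primitive subsets of $p\mathbb{N}$ that approach the bound $1/(p\log p)$, and any crude estimate overshoots. So the argument must be tight simultaneously at every $p$, with no logarithmic slack, and must use primitivity beyond the trivial consequence $p(m)\ge p$. A secondary difficulty is handling elements of $A_p$ of widely varying size uniformly, which I would address by dyadic decomposition and by pushing the recursion on $p(\cdot)$ into the $B_p$ themselves.

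Finally, once the localized inequality is established for every prime, summation over $p$ yields $f(A) \le f(\mathcal P)$ and hence the conjecture. I expect the technical heart of the paper to be the construction of the weight $\omega$ and the verification of the local inequality, while the reduction and summation steps should be essentially bookkeeping.
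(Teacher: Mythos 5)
Your reduction ``it suffices to prove $f(A_p)\le f(p)$ for every prime $p$'' contains the central gap: that local inequality is precisely the statement that every prime is Erd\H{o}s strong, and for $p=2$ this remains an \emph{open question} which the paper explicitly does not resolve. The known sufficient condition for $p$ to be Erd\H{o}s strong, $e^\gamma\prod_{q<p}(1-\tfrac1q)\le 1/\log p$, fails at $p=2$ because $\log 2>e^{-\gamma}$, and it even fails unconditionally for infinitely many odd primes, so no estimate of the Mertens/sieve type you sketch can deliver the local bound uniformly in $p$. The paper instead proves the local inequality only for odd primes (Theorem \ref{thm:Estrongodd}) and treats $2$ by a completely different global argument: if $2\in A$ then $A_2=\{2\}$ and one sums the odd local bounds, while if $2\notin A$ one proves the cruder global bound $f(A)<1.60<f(\mathcal P)$ (Theorem \ref{thm:evenPS}) by decomposing $A$ according to the exact power of $2$ dividing each element and exploiting that $2^kp$ can lie in $A$ for at most one $k$.

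Second, even for odd $p$ your proposed mechanism (a sieve majorant on $\sum_{m\in B_p}1/m$ plus a ``primitivity dividend'') is essentially the 1935 Erd\H{o}s argument as refined by Lichtman--Pomerance, and it only yields $f(A_p)<e^\gamma\,{\rm d}({\rm L}_p)$, a bound that exceeds $f(p)$ for infinitely many $p$; indeed the supremum of $f(A)$ over the weaker class of {\rm L}-primitive sets is about $1.805>f(\mathcal P)$ (Proposition \ref{prop:Lprimsup}), so primitivity must enter beyond the disjointness of the sets ${\rm L}_a$. The missing idea is a quantitative statement that a primitive set contains few elements $a$ with largest prime factor $P(a)$ close to $a$: if $P(a)^{1+v}>a$ throughout $A_n$, then ${\rm d}({\rm L}_{A_n})$ is at most roughly $\sqrt v\,{\rm d}({\rm L}_n)$ (Proposition \ref{prop:maindensity}), proved by showing the sets ${\rm L}_{ac}$ remain pairwise disjoint for all $c$ supported on primes in $[P(a^*),P(a^*)^{1/\sqrt v})$ (Lemma \ref{lem:disjointLac}). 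Partitioning $A_n$ by the size of $\log a/\log P(a)$ and integrating the resulting savings $\int_0^1\frac{\dd v}{2\sqrt v(1+v)}=\pi/4$ is what pushes the constant below $f(p)$ for odd $p$. Without this extra disjointness input your argument cannot be ``tight simultaneously at every $p$'' as you require.
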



The prime sum is $f(\mathcal{P}) = \sum_p 1/(p\log p)=1.6366\cdots$ after computations of Cohen \cite{Cohen}. In 1993, Erd\H{o}s and Zhang \cite{EZ} proved the bound $f(A) < 1.84$ for all primitive $A$. Recently in 2019, Lichtman and Pomerance \cite{LPprim} improved the bound to $f(A) < e^\gamma=1.781\cdots$, where $\gamma$ is the Euler-Mascheroni constant. Note the tail of the series for $f(\mathcal{P})$ converges quite slowly $O(1/\log x)$, and moreover there are sets $A\subset[x,\infty)$ for which $f(A)\sim 1$ as $x\to\infty$ (in this connection see Conjecture \ref{conj:ESS} below). As such, Conjecture \ref{conj:EPS} is not susceptible to direct attack by computing partial sums up to $x$.

One potential strategy to approach Conjecture \ref{conj:EPS} is via integration. Namely,
\begin{align*}
f(A) = \sum_{a\in A}\frac{1}{a\log a} = \sum_{a\in A}\int_1^\infty a^{-t}\dd{t} = \int_1^\infty f_t(A)\dd{t},
\end{align*}
letting $f_t(A) = \sum_{a\in A} a^{-t}$. So Conjecture 1 would follow if for any $t>1$, primitive set $A$,
\begin{align}\label{eq:ftA}
f_t(A) \ \le \ f_t(\mathcal P).
\end{align}
However, it was shown in \cite{BM} that \eqref{eq:ftA} holds if and only if
\begin{align*}
t \ge \tau:=1.1403\cdots,
\end{align*}
where $t=\tau$ is the unique real solution to the equation
\begin{align*}
\sum_p p^{-t} = 1+\Big(1-\sum_p p^{-2t}\Big)^{1/2}.
\end{align*}
The fact that $\tau$ is markedly larger than 1 gives some indication as to why the Erd\H{o}s primitive set conjecture has remained open. 

Similar analysis actually enables a disproof of a natural analogue of Conjecture \ref{conj:EPS} for the translated sum $f(A,h)=\sum_{a\in A}1/a(\log a+h)$, in that there are primitive $A$ for which $f(A,h)>f(\mathcal P,h)$ once $h\ge81$ \cite{LDM}, \cite{Laib}. This was refined down to just $h\ge1.04$ in \cite{Ltrans}, and suggests that the original conjecture (when $h=0$), if true, is only `barely' so. 

Concerning \eqref{eq:ftA}, we also note Chan et al. \cite{CLP2prim} proved $f_t(A) \ \le \ f_t(\mathcal P)$ for all $t\ge .7983$ for all 2-primitive sets $A$, thereby resolving Conjecture 1 in this special case (also see \cite{CLPkprim}). Here a set $A$ is 2-primitive if no member of $A$ divides the product of 2 others.

A separate strategy for the problem is to split up $A$ according to the smallest prime factor. That is, for each prime $p$ let
$$A_p = \{ n\in A : n \text{ has least prime factor }p\}.$$
As in \cite{LPprim}, we say $p$ is {\it Erd\H{o}s strong} if the singleton set $\{p\}$ maximizes $f(A)$ among all primitive sets $A$ all of whose elements have least prime factor $p$. That is, $f(A_p)\le f(\{p\})=:f(p)$ for all primitive $A$. Conjecture \ref{conj:EPS} would follow if every prime is Erd\H{o}s strong, since then $f(A) = \sum_p f(A_p) \le f(\mathcal{P})$.

By a short argument in \cite{LPprim} (also see Lemma \ref{lem:LMertfA}), a sufficient condition for a prime $p$ to be Erd\H{o}s strong is that
\begin{align}\label{eq:Mertprim}
e^{\gamma}\prod_{q<p}\Big(1-\frac{1}{q}\Big) \ \le \ \frac{1}{\log p}.
\end{align}
Here $q$ runs over primes. Note the two sides of this inequality are asymptotically equal by Mertens' prime product theorem. By direct computation, \eqref{eq:Mertprim} is satisfied by the first $10^8$ odd primes, but fails for $p=2$ since $\log 2> e^{-\gamma}$.

Moreover $99.999973\%$ of primes\footnote{More precisely, the set of such primes has discrete, logarithmic density equal to $0.99999973\cdots $ within $\mathcal P$.} satisfy \eqref{eq:Mertprim}, assuming the Riemann Hypothesis and the Linear Independence hypothesis\footnote{Namely, the sequence of numbers $\gamma_n > 0$ such that $\zeta(\frac{1}{2} + i\gamma_n) = 0$ is linearly independent over $\Q$.} \cite{LPrace}. This result is intimately related to the celebrated work of Rubinstein and Sarnak \cite{RubSarn} on the prime number race between $\pi(x)$ and $\li(x)$. On the Riemann Hypothesis alone \eqref{eq:Mertprim} fails for a positive proportion of primes $p$ (in log density), and even unconditionally \eqref{eq:Mertprim} is known to fail for infinitely many primes $p$. This perhaps suggests Conjecture \ref{conj:EPS} might be false, or at least beyond the reach of unconditional tools.


In this article we establish Conjecture \ref{conj:EPS}.
\begin{theorem}\label{thm:EPS}
For any primitive set $A$, we have
$f(A) \le f(\mathcal P)$.
\end{theorem}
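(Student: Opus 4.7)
The plan is to prove the strong local statement that every prime is Erd\H{o}s strong, i.e.\ $f(A_p) \le f(p) := 1/(p\log p)$ for every prime $p$, where $A_p$ denotes the sub-collection of elements of $A$ whose least prime factor is $p$. Since $A = \bigsqcup_p A_p$ disjointly, summing these local inequalities gives $f(A) = \sum_p f(A_p) \le \sum_p f(p) = f(\mathcal P)$. The map $n \mapsto n/p$ identifies $A_p$ with a primitive set of $p$-rough integers, so the task reduces to a uniform weighted bound over such families.

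For primes $p$ satisfying the Mertens-type sufficient condition \eqref{eq:Mertprim}, Erd\H{o}s strength follows from the short argument recalled later as Lemma \ref{lem:LMertfA}. Since the excerpt states that \eqref{eq:Mertprim} holds for all odd primes below $10^8$, these primes are handled immediately. The essential remaining obstacle is the prime $p = 2$, for which $e^\gamma \log 2 > 1$ and the Mertens bound fails outright, together with the infinite family of larger primes for which \eqref{eq:Mertprim} also fails unconditionally.

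To handle all the primes where \eqref{eq:Mertprim} fails, I would seek a uniformly stronger sufficient condition. The natural direction is to refine by the second-smallest prime factor: partition each $A_p$ further into a singleton $\{p\}$ (if $p\in A$) and pieces $A_{p,q}$ indexed by primes $q \ge p$ giving the second-smallest prime factor of $n \in A_p$. Bounding each $f(A_{p,q})$ by a two-variable Mertens-type estimate $g(p,q)$ and summing yields a new sufficient condition expressed as a double Mertens product in $p$ and $q$. One then needs to verify this strengthened condition for every prime $p$, including $p=2$, via explicit estimates on the relevant two-variable Mertens sums.

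The main difficulty is quantitative sharpness. For $p=2$ the gap between $f(2) = 1/(2\log 2)$ and the best known unconditional bound on $f(A_2)$ coming from $e^\gamma$ is narrow, so the two-variable Mertens estimates must be essentially tight, with no slack to spare in lower-order terms. If a single refinement proves insufficient for borderline cases, the natural recourse is to iterate by peeling off further prime factors; ensuring that the cumulative loss across this recursion telescopes into a quantity still bounded by $f(\mathcal P)$ is the technical heart of the argument. This interaction between the primitivity hypothesis and the fine distribution of small prime factors is where the proof is expected to succeed or fail.
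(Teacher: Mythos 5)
Your high-level decomposition $f(A)=\sum_p f(A_p)$ is the right starting point, but the plan has two genuine gaps. First, you propose to prove that \emph{every} prime, including $p=2$, is Erd\H{o}s strong. The paper explicitly leaves this open for $p=2$: the best bound obtained there is $f(A_2)<0.80$ when $2\notin A$, which still exceeds $f(2)\approx 0.72$. The paper circumvents this by a global case split: if $2\in A$ then $A_2=\{2\}$ trivially and the odd primes are handled locally; if $2\notin A$ one proves the cruder global bound $f(A)<1.60<f(\mathcal P)$ (Theorem \ref{thm:evenPS}), absorbing the excess at $p=2$ into the slack available at the odd primes (each odd prime satisfies $f(A_p)\le b_p f(p)$ with $b_p<0.91$). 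Without some such compensation mechanism your outline has no route past $p=2$.

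Second, and more fundamentally, the proposed refinement by the second-\emph{smallest} prime factor produces no savings. If you partition $A_p$ into pieces $A_{pq}$ and bound each by the Mertens-type estimate $f(A_{pq})\le e^\gamma\,{\rm d}({\rm L}_{pq})$ (Lemma \ref{lem:LMertfA} with $n=pq$), then summing over $q\ge p$ gives
\begin{align*}
\sum_{q\ge p} e^\gamma\,{\rm d}({\rm L}_{pq}) \;=\; \frac{e^\gamma}{p}\sum_{q\ge p}\frac{1}{q}\prod_{r<q}\Big(1-\frac{1}{r}\Big) \;=\; \frac{e^\gamma}{p}\prod_{r<p}\Big(1-\frac{1}{r}\Big) \;=\; e^\gamma\,{\rm d}({\rm L}_p),
\end{align*}
i.e.\ exactly the one-variable bound you started with; iterating the peeling telescopes in the same way. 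The paper's actual gain comes from an entirely different source: the \emph{largest} prime factor. One saves a factor $\log P(a)/\log a$ for each composite $a$, and the key new input (Lemma \ref{lem:disjointLac} and Proposition \ref{prop:maindensity}) shows that the elements with $P(a)^{1+v}>a$ contribute density at most $\sqrt{v}\,r_q\,{\rm d}({\rm L}_n)$, because the sets ${\rm L}_{ac}$ for auxiliary multipliers $c\in C_a^v$ remain pairwise disjoint. Partitioning by $v$ and integrating yields the factor $\int_0^1 \frac{\dd v}{2\sqrt v(1+v)}=\pi/4$, and it is this $\pi/4$ (since $e^\gamma\pi/4<f(\mathcal P)$) that closes the argument. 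Your proposal contains no analogue of this step, so the "technical heart" you defer to is precisely the missing idea.
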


Moreover, we show that every odd prime is Erd\H{o}s strong.
\begin{theorem}\label{thm:Estrongodd}
For any primitive set $A$ and any prime $p>2$, we have $f(A_p) \le f(p)$.
\end{theorem}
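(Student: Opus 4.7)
The plan is to strengthen the approach of Lichtman--Pomerance, which reduces the Erd\H{o}s-strong property of a prime $p$ to the Mertens-type inequality \eqref{eq:Mertprim}. Since \eqref{eq:Mertprim} fails for infinitely many primes unconditionally, and for a positive proportion of primes under the Riemann Hypothesis, a genuinely refined argument is required.

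Fix an odd prime $p$ and a primitive set $A$ consisting entirely of integers with least prime factor $p$; the goal is to bound $\sum_{a \in A} 1/(a\log a)$ by $1/(p\log p)$. Writing each $a \in A$ as $a = pm$ where every prime factor of $m$ is at least $p$, one may group terms by $m$. The crude inequality $\log a \ge \log p$ used in \cite{LPprim} recovers exactly \eqref{eq:Mertprim}; I would instead use the exact identity $\log a = \log p + \log m$, producing a termwise decay factor $\log p/(\log p + \log m)$ that is strictly less than $1$ whenever $m > 1$ and genuinely small once $m$ is large. This is the extra slack intended to absorb the deficit of the Mertens estimate.

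The central step would be to establish a weighted sieve inequality of the form
\[
\sum_{m \in A'} \frac{1}{m}\cdot\frac{\log p}{\log p + \log m} \ \le \ \frac{1}{\log p},
\]
where $A' := \{m : pm \in A\}$ is a primitive set of integers whose least prime factor is at least $p$. Because $A'$ is primitive, one can further decompose it according to its own least prime factor and iterate, yielding a two-level sieve in which the inner estimate is a Mertens-type bound at a shifted scale, and the outer weight $\log p/(\log p + \log m)$ supplies the saving needed to bypass the naive single-level bound \eqref{eq:Mertprim}. A natural tool is the integral representation $\int_1^\infty m^{-t}\,\rd t = 1/(m\log m)$ used in the excerpt, which transforms the weighted sum into an integral of $f_t(A')$-type quantities; the values $t \ge \tau$ are immediately controlled by \eqref{eq:ftA}, while the range $1 < t < \tau$ is where the sieve must do the real work.

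The main obstacle is making the argument quantitative and uniform in $p$. In the regime $p \to \infty$ the weight $\log p/(\log p + \log m)$ tends to $1$ unless $m$ is genuinely large, so the improvement over \eqref{eq:Mertprim} is subtle and must beat an explicit Mertens error term of Rosser--Schoenfeld type; combining this with the primitivity of $A'$ sharply is the heart of the matter. For the first few odd primes, where the asymptotics are not yet effective but \eqref{eq:Mertprim} is comfortable, one can close the argument by direct numerical verification. The case $p = 2$ is excluded at the outset because $\log 2 > e^{-\gamma}$ leaves no slack at all for any refinement of this form.
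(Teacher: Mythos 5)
Your reformulation is correct in spirit but the proposal stops exactly where the difficulty begins. First, a normalization error: factoring $f(A_p)=\frac{1}{p\log p}\sum_{m\in A'}\frac{1}{m}\cdot\frac{\log p}{\log p+\log m}$ shows the central inequality you need is $\sum_{m\in A'}\frac{1}{m}\cdot\frac{\log p}{\log p+\log m}\le 1$, not $\le 1/\log p$; as displayed your inequality is false (the single term $m=1$ already contributes $1$). More importantly, your weight $\log p/(\log p+\log m)=\log p(a)/\log a$ is organized around the \emph{smallest} prime factor, but that weight cannot carry the argument: for composite $a\in A_p$ it is at most $1/2$, yet $\sum_{m\in A'}1/m$ can diverge (e.g.\ $A'=\{qq': q\le q',\ q\ge p\}$ is primitive), so no termwise factor of this shape closes the sum. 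Convergence comes only from the density weight $\prod_{q<P(a)}(1-1/q)\asymp 1/(e^\gamma\log P(a))$, and the genuine savings factor is then $\log P(a)/\log a$, governed by the \emph{largest} prime factor. The critical case — composite $a$ with $P(a)$ just below $a$, where this factor is near $1$ — is precisely where \eqref{eq:Mertprim} is not ``comfortable'' and where your plan offers nothing: iterating the least-prime-factor decomposition reproduces the Erd\H{o}s--Lichtman--Pomerance bound $e^\gamma$ at each level with no gain, and the $f_t$ remark merely restates the known obstruction that \eqref{eq:ftA} fails for $1<t<\tau$.

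The paper's proof supplies exactly the missing mechanism. Lemma \ref{lem:disjointLac} shows that for a primitive set with $P(a)^{1+v}>a$, not only the ${\rm L}_a$ but all the ${\rm L}_{ac}$ with $c$ supported on primes in $[P(a^*),P(a^*)^{1/\sqrt{v}})$ are pairwise disjoint; by self-similarity this forces ${\rm d}({\rm L}_{A_n})\le \sqrt{v}\,r_q\,{\rm d}({\rm L}_n)$ (Proposition \ref{prop:maindensity}), i.e.\ a primitive set cannot concentrate much L-density on nearly-prime elements. Partitioning by the value of $v$ with $P(a)^{1+v}\approx a$ and summing by parts then yields the factor $\int_0^1\frac{\dd v}{2\sqrt{v}(1+v)}=\pi/4$ (Proposition \ref{prop:oddstrongdLn}), giving $f(A_q)\le b_qf(q)$ with $b_q\to\pi/4<1$; explicit Rosser--Schoenfeld constants and a finite computation handle small odd $q$. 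Your proposal has no analogue of this density saving and no identified limiting constant below $1$, so as written it cannot beat the Mertens barrier that makes \eqref{eq:Mertprim} fail for infinitely many $p$.
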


It remains an open question whether $p=2$ is Erd\H{o}s strong.

Another question related to Conjecture \ref{conj:EPS}, in 1968 Erd\H{o}s, S\'ark\"ozy, and Szemer\'edi posed the following \cite[eq. (11)]{ESS68}.

\begin{conjecture}[Erd\H{o}s--S\'ark\"ozy--Szemer\'edi] \label{conj:ESS}
We have
\begin{align*}
\lim_{x\to\infty}\sup_{\substack{A\subset[x,\infty)\\A\textnormal{ primitive}}}f(A) \ \le \ 1.
\end{align*}
\end{conjecture}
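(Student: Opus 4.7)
My strategy to prove Conjecture~\ref{conj:ESS} would be to push the Erd\H{o}s-strong framework of Theorem~\ref{thm:Estrongodd} further by exploiting the lower-bound constraint $a\ge x$. I would begin by decomposing $A = \bigsqcup_{p} A_p$ according to least prime factor. The contribution from primes $p \ge x$ lying in $A$ is bounded by $\sum_{p\ge x}1/(p\log p) = 1/\log x + o(1/\log x)$, which vanishes as $x\to\infty$, so the crux is controlling the composite part of $A$.

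Each $a \in A_p$ with $p<x$ factors as $a = pm$ with $m \ge x/p$ and all prime factors of $m$ at least $p$; moreover, the set $B_p := \{m : pm \in A_p\}$ inherits primitivity from $A$. This yields the identity
\begin{align*}
f(A_p) \ = \ \sum_{m\in B_p}\frac{1}{pm(\log m + \log p)},
\end{align*}
a translated sum over a primitive set $B_p \subset [x/p,\infty)$ with least prime factor at least $p$. The hoped-for refinement is a weighted Erd\H{o}s-strong inequality of the shape
\begin{align*}
f(A_p) \ \le \ \frac{1}{p\log p}\cdot\psi\!\Big(\frac{\log(x/p)}{\log p}\Big),
\end{align*}
for some decreasing function $\psi$ with $\psi(u)\to 0$ as $u\to\infty$. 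Summing such a bound across $p$, splitting into the ranges $p\le x^{1/u}$ (where the weight $\psi$ is small) and $p>x^{1/u}$ (where the Mertens tail of $\sum 1/(p\log p)$ is small), and then optimizing in $u$, should drive the total bound down to $1+o(1)$.

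The main obstacle is establishing the weighted inequality above. The bound $f(A_p)\le 1/(p\log p)$ from Theorem~\ref{thm:Estrongodd} is sharp at $A_p=\{p\}$, but this configuration is forbidden once $p<x$, so genuine savings should be available; the difficulty is to extract them in a form that is uniform in both $p$ and $x$. As emphasized in the introduction, configurations such as $A\subset \mathbb{N}_k$ with $k\asymp \log\log x$ already yield $f(A)\sim 1$, so the extremal structure changes qualitatively with $x$, and any argument must navigate the transition between the ``prime-like'' and ``$\mathbb{N}_k$-like'' regimes. I expect this will require bootstrapping the recursion $A_p\leftrightarrow B_p$ together with the translated-sum machinery of \cite{Ltrans}, together with a quantitative version of the Erd\H{o}s-strong inequality that exhibits $\psi$ explicitly rather than implicitly through a singleton comparison.
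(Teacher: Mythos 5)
First, a point of scope: Conjecture~\ref{conj:ESS} is stated in the paper as an \emph{open conjecture}, not a theorem. The paper does not prove it; it only establishes the weaker bound $e^\gamma\pi/4\approx 1.399$ (Theorem~\ref{thm:ESS}), via a decomposition of $A$ according to the product $n\in\N_k$ of the $k$ smallest prime factors of each element, combined with Proposition~\ref{prop:oddstrongdLn} and the Erd\H{o}s--S\'ark\"ozy bound \eqref{eq:NkPnley}. So any complete proof you gave would necessarily go beyond the paper. Yours, however, is not a proof: it is a strategy whose pivotal step is explicitly labelled ``hoped-for'' and left unestablished.

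Worse, that pivotal step is false as stated. You ask for a bound $f(A_p)\le \frac{1}{p\log p}\,\psi\big(\log(x/p)/\log p\big)$ with $\psi(u)\to 0$ as $u\to\infty$, valid for primitive $A\subset[x,\infty)$. Take $A=\N_k$ and $x=2^k$, so $A\subset[x,\infty)$ is primitive. Then $A_2=\{2m: m\in\N_{k-1}\}$ and
\begin{align*}
f(A_2)=\sum_{m\in\N_{k-1}}\frac{1}{2m\log(2m)}=\tfrac12\,f(\N_{k-1})\,(1+o(1))\longrightarrow \tfrac12
\end{align*}
as $k\to\infty$, using \eqref{eq:fNksim1}. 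Your proposed bound for $p=2$ reads $f(A_2)\le \frac{1}{2\log 2}\psi(k-1)$, which forces $\psi(k-1)\ge \log 2+o(1)$; hence no $\psi$ tending to $0$ (indeed no $\psi$ with $\liminf\psi<\log 2$) can exist. You correctly note in your last paragraph that the $\N_k$-like regime is the obstruction, but you do not register that it already refutes the specific functional form you propose: the savings over the singleton bound $f(p)=1/(p\log p)$ do not improve indefinitely as $x/p$ grows, but saturate at a positive constant (conjecturally ${\rm d}({\rm L}_p)\cdot p\log p\cdot e^{\gamma}$-type quantities, cf.\ Proposition~\ref{prop:limsupLfAq} for the {\rm L}-primitive analogue). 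Any viable version of your plan would need a bound of the shape $f(A_p)\le c_p+o_x(1)$ with $\sum_p c_p=1$, which is essentially a restatement of the conjecture itself; the recursion $A_p\leftrightarrow B_p$ and the translated-sum machinery of \cite{Ltrans} give no mechanism for producing such constants.
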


This also appears in \cite[p.\,244]{MathErdos} as Problem 2.2, and in \cite[p.\,224]{MathErdosII} as Problem 2.

Not much has been proven in this direction until very recently. Recall the set $\mathbb{N}_k$ of numbers with exactly $k$ prime factors (with multiplicity) lies in $[2^k,\infty)$. Lichtman and Pomerance \cite{LPprim} proved $f(\N_k)\gg 1$, and in \cite{Lalmost} it was shown $f(\N_k)\sim 1$ as $k\to\infty$. This means that if Conjecture \ref{conj:ESS} holds, then the limit must attain an {\it equality} of 1. We note \cite[Theorem 4.1]{Lalmost} gives for all $\eps>0$,
\begin{align}\label{eq:fNksim1}
f(\N_k) = 1 + O_\eps(k^{\eps-1/2}).
\end{align}
Moreover, computations up to $k=20$ suggest the true rate of decay may be exponential $O(2^{-k})$, see \cite{Lalmost}.

The methods in this paper enable the following progress towards Conjecture \ref{conj:ESS}.
\begin{theorem}\label{thm:ESS}
We have
\begin{align*}
\lim_{x\to\infty}\sup_{\substack{A\subset[x,\infty)\\A\textnormal{ primitive}}}f(A) \ \le \ e^\gamma\frac{\pi}{4} \approx 1.399.
\end{align*}
\end{theorem}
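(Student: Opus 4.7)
The plan is to adapt the machinery from the proof of Theorem~\ref{thm:Estrongodd} to the setting $A\subset[x,\infty)$. First, I partition $A = A_2 \sqcup \bigsqcup_{p>2}A_p$ by least prime factor. Theorem~\ref{thm:Estrongodd} immediately gives
\[
\sum_{p>2}f(A_p) \ \le \ \sum_{p>2}\frac{1}{p\log p} \ = \ f(\mathcal P) - \frac{1}{2\log 2},
\]
so the task reduces to showing $\limsup_{x\to\infty} f(A_2) \le e^\gamma\pi/4 - (f(\mathcal P) - 1/(2\log 2)) \approx 0.484$.

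To bound $f(A_2)$, I stratify by the $2$-adic valuation: write $A_2 = \bigsqcup_{k\ge 1}A_2^{(k)}$ with $A_2^{(k)}=\{a\in A_2: 2^k\Vert a\}$, and set $M^{(k)}:=A_2^{(k)}/2^k$. Each $M^{(k)}$ is a primitive set of \emph{odd} integers in $[x/2^k,\infty)$, so every element has $P^-(m)\ge 3$; applying Theorem~\ref{thm:Estrongodd} to each $M^{(k)}$ yields $\sum_{m\in M^{(k)}}1/(m\log m) \le f(\mathcal P)-1/(2\log 2)$. Using $\log(2^km)=k\log 2 + \log m$,
\[
f(A_2) \ = \ \sum_{k\ge 1}\frac{1}{2^k}\sum_{m\in M^{(k)}}\frac{1}{m(k\log 2 + \log m)}.
\]
The naive bound (dropping $k\log 2$) yields only $f(A_2) \le f(\mathcal P) - 1/(2\log 2)$, producing the weaker total $f(A) \le 2(f(\mathcal P) - 1/(2\log 2)) \approx 1.83$. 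To sharpen this to $e^\gamma\pi/4 \approx 1.399$, I would use the integral representation $1/\log n = \int_0^\infty n^{-t}\,dt$ to convert $f(A_2)$ into a Dirichlet-series integral in $t$, and bound the resulting integrand using Theorem~\ref{thm:Estrongodd} for the odd-prime Dirichlet sum together with the geometric factor $\sum_{k\ge 1} 2^{-k(1+t)} = 1/(2^{1+t}-1)$.

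The main obstacle is extracting the precise constant $e^\gamma\pi/4$. The factor $e^\gamma$ should arise from Mertens' theorem via the odd-prime Dirichlet sum near the singular point $t\to 0^+$, while $\pi/4 = L(1,\chi_4)$ should emerge from the interplay of the $2$-adic geometric factor $1/(2^{1+t}-1)$ with the odd-prime Mertens product. Verifying that the constant is exactly $e^\gamma\pi/4$ (rather than a larger quantity) is delicate, since even a slight loss in the scale-by-scale bookkeeping would degrade the final result. An alternative route would be to avoid the integral representation and instead apply Abel summation at each level $k$, comparing $\sum_{m\in M^{(k)}} 1/(m(k\log 2+\log m))$ against $\sum_{m\in M^{(k)}} 1/(m\log m)$ and carefully tracking the correction to produce the $L(1,\chi_4)$ factor.
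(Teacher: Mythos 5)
Your decomposition cannot reach the constant $e^\gamma\pi/4$, and the gap appears already in the first step. Bounding $\sum_{p>2}f(A_p)\le f(\mathcal P)-f(2)\approx 0.915$ forces you to prove $\limsup_{x\to\infty}\sup f(A_2)\le e^\gamma\pi/4-0.915\approx 0.484$, and this is \emph{false}: the set $A=\{n\in\N_k:2\mid n\}$ is primitive, contained in $[2^k,\infty)$, and satisfies $A=A_2$ with $f(A)=f(\N_k)-f(\N_k(\mathcal P\setminus\{2\}))\to 1-\tfrac12=\tfrac12$ as $k\to\infty$ (using \eqref{eq:fNksim1} and $f(\N_k(\mathcal P\setminus\{2\}))\to\tfrac12$ from \cite[Cor.\ 4.2]{Lalmost}). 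Since $0.915+0.5>1.399$, no amount of care with the $2$-adic stratification can rescue the argument; the loss is in conceding the full $f(p)$ for each odd $p$. Even the sharper bound $f(A_p)\le b_pf(p)$ (valid here since $p\notin A$ for $p<x$) only yields $\sum_{p>2}f(A_p)\lesssim C_1+C_2\approx 0.797$, and the same treatment of $A_2$ gives another $\approx 0.797$, recovering the bound $1.595$ of Theorem \ref{thm:evenPS} but not $1.399$. Relatedly, the suggestion that $\pi/4$ should emerge as $L(1,\chi_4)$ from the $2$-adic geometric factor is numerology: in this paper $\pi/4=\int_0^1\frac{\rd v}{2\sqrt v\,(1+v)}=\arctan 1$ arises in Proposition \ref{prop:oddstrongdLn} from integrating the density saving $\sqrt v$ of Proposition \ref{prop:maindensity} against the weight $1/(1+v)$, and has nothing to do with the prime $2$.

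The paper's proof partitions $A$ quite differently: not by the least prime factor, but by the product $n$ of the $k$ \emph{smallest} prime factors of each element, with $k\to\infty$. Elements with at most $k$ prime factors contribute $o(1)$ once $x$ is large; among the anchors $n\in\N_k$, those with $P(n)\le y$ have negligible total density $\sum_n {\rm d}({\rm L}_n)$ by a lemma of Erd\H{o}s--S\'ark\"ozy \eqref{eq:NkPnley}; and for the remaining $n$ with $P(n)>y$, Proposition \ref{prop:oddstrongdLn} gives $f(A_n)\le(\tfrac{\pi}{4}e^\gamma+\eps)\,{\rm d}({\rm L}_n)$, after which $\sum_{n\in\N_k}{\rm d}({\rm L}_n)\le 1$ yields the constant $\tfrac{\pi}{4}e^\gamma$. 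This is exactly what a least-prime-factor partition cannot do: at level $k=1$ the ``bad'' anchors with small $P(n)$ (in particular $n=2$, with ${\rm d}({\rm L}_2)=\tfrac12$) carry a bounded, non-negligible share of the density, so the clean constant $\pi/4$ is unattainable. If you want to salvage your outline, you must let the anchor $n$ absorb arbitrarily many small prime factors before invoking the density bound.
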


\subsection*{Notation}
Let $p(a),P(a)$ denote the smallest and largest prime factors of $a\in\Z_{>1}$, respectively, and denote $a^*=a/P(a)$. Let $\Omega(n)$ denote the number of prime factors of $n$ (with multiplicity) and let $\N_k = \{n: \Omega(n)=k\}$. Define $f(a) = 1/(a\log a)$ and $f(A) = \sum_{a\in A}f(a)$ for $A\subset \Z_{>1}$. Let $\mathcal P$ be the set of prime numbers, whose elements we denote by $p$ and $q$, unless otherwise stated. Also $p^k\| n$ means $p^k\mid n$ and $p^{k+1}\nmid n$.

\subsection{Proof outline of Theorem \ref{thm:EPS}}

The proof is a refinement of the argument of \cite{LPprim}. The key new idea is to exploit the fact that $A$ cannot contain too many elements $a$ with $P(a)$ just slightly less than $a$. This improves the critical case in the argument of \cite{LPprim}, and ultimately leads to an improvement by a factor of $\pi/4$ from a contribution from each $a\in A$ which is not prime. Since $e^\gamma\pi/4<f(\mathcal P)$, this ultimately means that $f(A)$ is maximized when all elements are prime. (Additional care is needed for small numbers, using explicit bounds.)

Let us recall the rough argument of \cite{LPprim} (suppressing details for primes and small numbers). By Mertens' product theorem,
\begin{align}\label{eq:sumofden}
f(A)=\sum_{a\in A}\frac{1}{a\log{a}} < \sum_{a\in A}\frac{1}{a\log P(a)}\approx  e^\gamma\sum_{a\in A}\frac{1}{a}\prod_{p<P(a)}\Big(1-\frac{1}{p}\Big).
\end{align}
But $a^{-1}\prod_{p<P(a)}(1-p^{-1})$ is the natural density of ${\rm L}_a=\{ba\;:\;p\mid b \Rightarrow p\ge P(a)\}$, and these sets turn out to be disjoint by primitivity of $A$ (Lemma \ref{lem:trichot}). So the sum of densities in \eqref{eq:sumofden} is trivially at most 1, leading to the bound $f(A)<e^\gamma$ for primitive $A$. This is inspired by the original 1935 argument of Erd\H{o}s \cite{Erdos35}.

There is a loss in the above argument when bounding $a$ by $P(a)$, and this loss is largest when $a$ is far from prime. We can save an additional factor of $\log{P(a)}/\log a$ for any individual $a\in A$, and this would be a significant improvement in the case $P(a)^2<a$, say. Therefore the critical case to handle is when $a\in A$ is composite with $P(a)$ close to $a$ in size. The key new ingredient (Proposition \ref{prop:maindensity}) shows that if $P(a)^{1+v}>a$ uniformly for all $a\in A$ (so the savings factor is $\log P(a)/\log a > 1/(1+v)$), then we can bound the sum of densities in \eqref{eq:sumofden} by $\sqrt{v}$. This refines the trivial bound of 1 in the range $0<v<1$, and quantifies the earlier statement that $A$ contains few elements $a$ with $P(a)$ slightly less than $a$. As the savings $1/(1+v)$ improves with $v$, the worst-case scenario is when the subset of $a\in A$ with $P(a)^{1+v}\approx a$ contributes about $\frac{\dd}{\dd v}[\sqrt{v}]=1/2\sqrt{v}$ to the sum of densities in \eqref{eq:sumofden}. Combining these ingredients ultimately leads to a savings of $\int_0^1\dd v/2\sqrt{v}(1+v)=\pi/4$, as desired.

Lastly, the key Proposition \ref{prop:maindensity} relies on the following observation (Lemma \ref{lem:disjointLac}): not only are the sets ${\rm L}_a$ disjoint, but so too are ${\rm L}_{ac}$ for many choices of integers $c$ (in fact all choices of $c$ with prime factors between $P_2(a)$ and $P_2(a)^{1/\sqrt{v}})$. Thus the sum of densities of these ${\rm L}_{ac}$ must be at most 1. But these sets ${\rm L}_{ac}$ are self-similar to the ${\rm L}_a$, and so the sum of their densities is roughly $1/\sqrt{v}$ times that of the ${\rm L}_a$, giving the desired bound $\sqrt{v}$.

\subsection{{\rm L}-primitive sets}

As outlined above, the subset of multiples of each $a\in A$,
\begin{align}\label{eq:defLa}
{\rm L}_a & := \big\{ba\in \N \;:\; p\mid b \implies p\ge P(a) \big\},
\end{align}
arises naturally in our proof. As such we shall introduce `{\rm L}' refinements of our common notions (here {\rm L} alludes to `lexicographic'). Specifically, if $n\in {\rm L}_a$ we say $n$ is an {\it {\rm L}-multiple} of $a$, and $a$ is an {\it{\rm L}-divisor} of $n$. Most importantly, we introduce the following key definition.
\begin{definition}
A set $A\subset \Z_{>1}$ is {\it {\rm L}-primitive} if $a'\notin {\rm L}_a$ for all distinct $a,a'\in A$.
\end{definition}
That is, $A$ is {\rm L}-primitive if no member of $A$ is an {\rm L}-multiple of another. In particular this definition is weaker than primitive. 

One may apply the basic argument as in \eqref{eq:sumofden} more generally for {\rm L}-primitive sets $A$, leading to the same bound $f(A) < e^\gamma$ (again ignoring small numbers). Moreover, {\rm L}-primitive sets play a central role in the proof of Theorem \ref{thm:EPS}. However, it turns out the bound $e^\gamma$ is essentially best possible for {\rm L}-primitive sets (see Proposition \ref{prop:Lprimsup}), which is markedly larger than $f(\mathcal P)$. This further highlights the subtlety of Conjecture \ref{conj:EPS}.

\subsection{Density and divisibility chains}
Recall the natural (asymptotic) density ${\rm d}(S)=\lim_{x\to\infty} |S\cap [1,x]|/x$ of a set $S\subset \N$. We also consider $\log$ density $\delta(S)$ and $\log\log$ density $\Delta(S)$, given by
\begin{align}\label{eq:densitydefs}
\delta(S) & = \lim_{x\to\infty} \frac{1}{\log x}\sum_{n\in S, n\le x}\frac{1}{n},\qquad\text{and}\quad
&\Delta(S)  = \lim_{x\to\infty} \frac{1}{\log\log x}\sum_{n\in S, 1<n\le x}\frac{1}{n\log n},
\end{align}
provided these limits exist. Recall the corresponding upper densities $\overline{\rm d}(S)$, $\overline{\delta}(S)$, $\overline{\Delta}(S)$ always exist, by replacing $\lim_{x\to\infty}$ with $\limsup_{x\to\infty}$ (and similarly $\liminf_{x\to\infty}$ for lower densities).

Taking an abstract view, a primitive set is an antichain for the partial ordering of integers by divisibility. As such this naturally leads to the dual notion of a chain in this context. Namely, an infinite sequence of integers $1<d_1<d_2<\cdots$ is a {\it divisibility chain} if $d_j\mid d_{j+1}$ for all $j\ge1$. A classical 1937 theorem of Davenport and Erd\H{os} \cite{DE37} asserts that if set $A\subset \N$ has upper $\log$ density $\overline{\delta}(A)>0$, then it contains an infinite divisibility chain $D\subset A$.

Analogously, we introduce the following refinement.

\begin{definition}
An infinite sequence of integers $1<d_1<d_2<\cdots$ is an {\it{\rm L}-divisibility chain} if $d_{j+1}\in {\rm L}_{d_j}$ for all $j\ge1$.
\end{definition}

That is, $d_{j+1}$ is an {\rm L}-multiple of $d_j$ for all $j\ge1$. In particular this definition is stronger than a (mere) divisibility chain. 

We refine the Davenport--Erd\H{os} theorem to {\rm L}-divisibility chains.
\begin{theorem}\label{thm:LDavenErdos}
If a set $A\subset \N$ has upper $\log$ density $\overline{\delta}(A)>0$, then $A$ contains an infinite {\rm L}-divisibility chain.
\end{theorem}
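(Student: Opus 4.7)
The plan is to construct the infinite ${\rm L}$-divisibility chain in $A$ greedily, maintaining a positive ``relative'' upper log density within successive ${\rm L}$-multiple classes. For $a\in\N$ define
\[
\mu_a \;:=\; \frac{\overline{\delta}(A\cap {\rm L}_a)}{{\rm d}({\rm L}_a)},
\]
adopting the convention ${\rm L}_1=\N$, ${\rm d}({\rm L}_1)=1$, so that $\mu_1=\overline{\delta}(A)>0$ by hypothesis and $\mu_a\in[0,1]$ in general. The crux is the following inductive step: whenever $a\in\{1\}\cup A$ has $\mu_a>0$, there exists $a'\in A\cap {\rm L}_a$ with $a'>a$ and $\mu_{a'}>0$. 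Iterating from $a_0=1$ yields a strictly increasing sequence $a_1<a_2<\cdots$ in $A$ with $a_{k+1}\in {\rm L}_{a_k}$ for each $k\ge 0$, i.e.\ an infinite ${\rm L}$-divisibility chain.

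To prove the inductive step, let $M_a'$ denote the set of ${\rm L}$-minimal elements of $A\cap {\rm L}_a\setminus\{a\}$. Transitivity of ${\rm L}$-divisibility combined with the trichotomy of Lemma \ref{lem:trichot} imply both that the sets ${\rm L}_{a'}$ for $a'\in M_a'$ are pairwise disjoint, and that every $n\in A\cap {\rm L}_a\setminus\{a\}$ has an ${\rm L}$-divisor in $M_a'$; this yields the disjoint decomposition
\[
A\cap {\rm L}_a\setminus\{a\} \;=\; \bigsqcup_{a'\in M_a'}(A\cap {\rm L}_{a'}).
\]
Setting $f_{a'}(x):=({\rm d}({\rm L}_{a'})\log x)^{-1}\sum_{n\in A\cap {\rm L}_{a'},\,n\le x}1/n$, so $\mu_{a'}=\limsup_{x\to\infty}f_{a'}(x)$, the decomposition gives
\[
\frac{1}{\log x}\sum_{n\in A\cap {\rm L}_a,\,n\le x}\frac{1}{n} \;=\; \sum_{a'\in M_a'} {\rm d}({\rm L}_{a'})\,f_{a'}(x) \;+\; O\!\Big(\frac{1}{\log x}\Big).
\]
Disjointness forces $\sum_{a'\in M_a'}{\rm d}({\rm L}_{a'})\le {\rm d}({\rm L}_a)$, and a Mertens-type bound on $P(a')$-rough integers supplies $f_{a'}(x)\le C$ uniformly in $a'$ once $\log x$ is large. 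Reverse Fatou then gives
\[
\mu_a\,{\rm d}({\rm L}_a) \;\le\; \sum_{a'\in M_a'}{\rm d}({\rm L}_{a'})\,\mu_{a'} \;\le\; \Big(\sup_{a'\in M_a'}\mu_{a'}\Big)\,{\rm d}({\rm L}_a),
\]
hence $\sup_{a'\in M_a'}\mu_{a'}\ge\mu_a>0$, furnishing the desired $a'\in M_a'\subset A\cap {\rm L}_a\setminus\{a\}$.

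The main obstacle I anticipate is justifying the reverse Fatou interchange, which requires the uniform-in-$a'$ domination $f_{a'}(x)\le C$ for all $x\ge x_0$; after parametrizing ${\rm L}_{a'}$ by the $P(a')$-rough integers, this reduces to a standard Mertens-type estimate on $\sum_{b\le y,\,p\mid b\Rightarrow p\ge P}1/b$ uniform in the range $P\le y$, which is automatic since $P(a')\le a'\le x$ whenever $f_{a'}(x)\ne 0$. With this technical input in hand, the rest of the argument is soft: the disjoint decomposition is a formal consequence of the trichotomy and the well-foundedness of divisibility on $\N$, and the greedy iteration never stalls because positivity of $\mu_{a_k}$ is preserved at each step.
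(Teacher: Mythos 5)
Your argument is correct, and it shares the paper's overall skeleton: iterate the claim that positive upper log density of $A\cap {\rm L}_a$ is inherited by some $A\cap {\rm L}_{a'}$ with $a'\in A\cap {\rm L}_a$, $a'>a$, where $a'$ ranges over the ${\rm L}$-primitive generating set of $A\cap {\rm L}_a\setminus\{a\}$ (your $M_a'$ is exactly the paper's $\langle\,\cdot\,\rangle$ from Lemma \ref{lem:genset}, and its disjointness is Lemma \ref{lem:trichot}). Where you genuinely diverge is in the proof of that claim. The paper first establishes, as a standalone lemma, that the log density $\delta({\rm L}_B)$ of a set of ${\rm L}$-multiples of an ${\rm L}$-primitive set exists and equals $\sum_{b\in B}{\rm d}({\rm L}_b)$ exactly (a two-scale Mertens computation with parameters $x\ge y$), and then derives the claim by contradiction after truncating $B$ at a level $z$ and using the convergence of the tail. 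You instead run a direct averaging argument: the disjoint decomposition together with the crude uniform bound
\begin{align*}
\sum_{\substack{n\le x\\ n\in {\rm L}_{a'}}}\frac{1}{n} \;\le\; \frac{1}{a'}\prod_{P(a')\le p\le x}\Big(1-\frac{1}{p}\Big)^{-1} \;\le\; C\,{\rm d}({\rm L}_{a'})\log x
\end{align*}
(one application of Mertens/Rosser--Schoenfeld, valid since the sum is empty unless $a'\le x$) supplies the domination needed for the reverse Fatou interchange, and you land on the cleaner quantitative inequality $\mu_a\le\sup_{a'\in M_a'}\mu_{a'}$. What this buys you is independence from the exact-log-density lemma — an absolute constant $C$ suffices where the paper's route needs the precise asymptotic — at the cost of justifying the interchange of $\limsup$ with an infinite sum, which you correctly identify as the crux and which does go through because $\sum_{a'\in M_a'}{\rm d}({\rm L}_{a'})\le {\rm d}({\rm L}_a)<\infty$ by disjointness. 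The remaining supporting steps (every $n\in A\cap{\rm L}_a\setminus\{a\}$ having a unique ${\rm L}$-minimal ${\rm L}$-divisor in $M_a'$, and $a'>a$ for $a'\in{\rm L}_a\setminus\{a\}$) all check out.
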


In 1966 Erd\H{o}s, S\'ark\"ozy, and Szemer\'edi \cite[Theorem 1]{ESS66} quantified the Davenport--Erd\H{os} theorem by showing such a divisibility chain $D$ satisfies $\limsup_{y\to\infty}\sum_{d\in D,d\le y}1/\sqrt{\log\log y} > 0$, and proved such growth rate is best possible.

They also studied the analogous question for upper $\log\log$ density, which they write ``seems more interesting to us.'' Namely, in \cite[Theorem 2]{ESS66} they established the following quantitative result.

\begin{theorem}[Erd\H{o}s--S\'ark\"ozy--Szemer\'edi] \label{thm:ESSchain}
If $A\subset \N$ has upper $\log\log$ density $\overline{\Delta}(A)>0$, then there is an infinite divisibility chain $D\subset A$ of growth
\begin{align}\label{eq:ESSloglog}
\limsup_{y\to\infty}\sum_{\substack{d\in D\\d\le y}}\frac{1}{\log\log y} \ \ge \ \frac{\overline{\Delta}(A)}{e^\gamma}.
\end{align}
\end{theorem}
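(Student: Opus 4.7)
I would combine a layer decomposition of $A$ into primitive subsets with the Erd\H os bound for primitive sets to obtain arbitrarily long divisibility chains in $A\cap[1,y]$ of length at least $(\overline{\Delta}(A)/e^\gamma - o(1))\log\log y$, and then extract an infinite chain by a compactness argument.

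For each $a\in A$, let $\lambda(a)$ denote the length of the longest divisibility chain in $A$ terminating at $a$; this is finite (in fact $\le\log_2 a+1$) since consecutive chain elements differ by a factor $\ge 2$. The level sets $A_k:=\{a\in A:\lambda(a)=k\}$ partition $A$, and each $A_k$ is primitive, since $a\mid a'$ with $a\ne a'$ in $A_k$ would force $\lambda(a')\ge k+1$. By the Erd\H os 1935 bound in its truncated form (namely $\sum_{b\in B,\,b\le y}1/(b\log b)\le e^\gamma+o_{y\to\infty}(1)$ for any primitive $B$, cf.\ \eqref{eq:sumofden}), summing over the layers intersecting $[1,y]$ yields
\begin{align*}
(\overline{\Delta}(A)-o(1))\log\log y \;\le\; \sum_{a\in A,\,a\le y}\frac{1}{a\log a} \;=\; \sum_{k=1}^{K(y)}\sum_{a\in A_k,\,a\le y}\frac{1}{a\log a} \;\le\; K(y)\,(e^\gamma+o(1)),
\end{align*}
where $K(y)$ is the length of the longest chain in $A\cap[1,y]$. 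Rearranging gives $K(y) \ge (\overline{\Delta}(A)/e^\gamma-o(1))\log\log y$.

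To obtain a single infinite chain, I would take $y_n\to\infty$ realizing the $\limsup$ defining $\overline{\Delta}(A)$, producing chains $C_n\subset A\cap[1,y_n]$ of length $K_n\ge(\overline{\Delta}(A)/e^\gamma-o(1))\log\log y_n$, and extract $D$ by a diagonal argument — iteratively passing to a subsequence on which longer and longer initial segments of $C_n$ stabilize. The key supporting fact is that the deep layers $\bigcup_{k\ge K}A_k$ retain upper $\log\log$ density $\ge\overline{\Delta}(A)$ for every $K$, since removing the first $K-1$ primitive layers removes only $O(K)$ from the partial sum of $1/(a\log a)$. Hence the stabilized prefixes can always be extended inside a subset of $A$ of positive density, and the quantitative growth $\limsup_y |D\cap[1,y]|/\log\log y\ge\overline{\Delta}(A)/e^\gamma$ descends to $D$ from the $C_n$.

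The main obstacle lies in this last extraction: because the divisibility poset has infinite branching at each node, K\"onig's lemma does not apply directly, and one must carefully exploit the positive $\log\log$ density of the deep-layer tails to guarantee that the diagonalization yields a genuinely infinite chain (rather than merely arbitrarily long finite ones) while preserving the extremal rate $\overline{\Delta}(A)/e^\gamma$.
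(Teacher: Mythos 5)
Your first half is sound and matches the paper's strategy in spirit: the layer decomposition into primitive level sets $A_k$, the bound $f(A_k\cap[1,y])<e^\gamma$ per layer, and the conclusion that $A\cap[1,y]$ contains divisibility chains of length at least $(\overline{\Delta}(A)/e^\gamma-o(1))\log\log y$ along a suitable sequence of $y$'s are all correct (the paper works with an L-primitive layering $A^i=\langle A\setminus\bigcup_{j<i}A^j\rangle$ in its proof of the stronger Theorem \ref{thm:ESSLchain}, but the counting is the same). The genuine gap is exactly the step you flag as ``the main obstacle'': the passage from arbitrarily long finite chains to a single infinite chain realizing the rate. The diagonal argument as described does not work. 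Nothing forces initial segments of the chains $C_n$ to stabilize: the minimal element of $C_n$ may tend to infinity with $n$, and even with a fixed prefix there are infinitely many admissible continuations, so no pigeonhole or compactness applies; moreover, even if some infinite chain were extracted as a limit, its counting function at scale $y_n$ would not automatically inherit the length $K_n$ of $C_n$. Your observation that the deep tails $\bigcup_{k\ge K}A_k$ retain the upper $\log\log$ density is true, but by itself it only reproves that long finite chains exist at all scales; it does not produce the infinite chain.

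The missing idea, which is how the paper proceeds, is to apply the Davenport--Erd\H{o}s theorem to the set of \emph{deep} elements, $B=\bigcup_j\big(A\cap[1,x_j]\setminus\bigcup_{i<r_j}A^i\big)$ with $r_j\approx\frac{\Delta-2\eps}{e^\gamma+\eps}\log\log x_j$: your counting shows $B$ has positive upper $\log\log$ density, hence positive upper $\log$ density, hence contains an infinite chain $d_1,d_2,\dots$, each element of which terminates a finite chain of length at least $r_{J(d_k)}$. One then splices the portion of the $k$-th finite chain lying in $(d_{k-1},d_k]$ onto the chain $(d_k)_k$. Note that even this splicing requires an argument: for ordinary divisibility, two divisors of $d_k$ need not be comparable, so $d_{k-1}\mid d_k$ and $c\mid d_k$ do not give $d_{k-1}\mid c$. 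The paper resolves this by working with L-divisibility throughout, where the L-divisors of a fixed integer are totally ordered (Lemma \ref{lem:trichot}), so $c>d_{k-1}$ forces $c\in{\rm L}_{d_{k-1}}$. As written, your proposal establishes the finite quantitative statement but not the theorem.
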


Analogously, we quantify Theorem \ref{thm:LDavenErdos} in the case of $\log\log$ density, thereby refining Theorem \ref{thm:ESSchain} of Erd\H{o}s--S\'ark\"ozy--Szemer\'edi to {\rm L}-divisibility chains.

\begin{theorem}\label{thm:ESSLchain}
If $A\subset \N$ has upper $\log\log$ density $\overline{\Delta}(A)>0$, then there is an infinite {\rm L}-divisibility chain $D\subset A$ of growth
\begin{align*}
\limsup_{y\to\infty}\sum_{\substack{d\in D\\d\le y}}\frac{1}{\log\log y} \ \ge \ \frac{\overline{\Delta}(A)}{e^\gamma}.
\end{align*}
\end{theorem}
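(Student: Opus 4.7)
The plan is to transpose the proof of Theorem~\ref{thm:ESSchain} of Erdős--Sárközy--Szemerédi to the L-divisibility setting, with divisibility replaced throughout by L-divisibility. The only new ingredient needed is the L-primitive analogue of Erdős's 1935 $f$-bound: \emph{for every L-primitive set $B \subset \Z_{>1}$ we have $f(B) \le e^\gamma + o(1)$}. As noted in the paper's discussion of L-primitive sets, this follows from the same disjointness/Mertens argument as in \eqref{eq:sumofden}, since the pairwise disjointness of $\{L_a\}_{a \in B}$ (the input that Erdős's argument requires) is precisely the definition of L-primitivity.

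For each $a \in A$ let $g(a)$ denote the length of the longest L-divisibility chain in $A$ ending at $a$, and set $A_k = \{a \in A : g(a) = k\}$. Each $A_k$ is L-primitive: if distinct $a, a' \in A_k$ satisfy $a \in L_{a'}$, then appending $a$ to a length-$k$ L-chain ending at $a'$ produces a length-$(k{+}1)$ L-chain ending at $a$, contradicting $g(a) = k$. Letting $K(y)$ denote the length of the longest L-chain in $A \cap [1,y]$, the L-primitive bound applied to each $A_k \cap [1, y]$ yields
\begin{align*}
\sum_{\substack{a \in A \\ a \le y}} f(a) \ = \ \sum_{k=1}^{K(y)}\sum_{\substack{a \in A_k \\ a \le y}} f(a) \ \le \ K(y)\,(e^\gamma + o(1)).
\end{align*}
By hypothesis there is a sequence $y_n \to \infty$ with $\sum_{a \in A,\ a \le y_n} f(a) \ge (\overline{\Delta}(A)-o(1))\log\log y_n$, whence
\begin{align*}
K(y_n) \ \ge \ \Big(\frac{\overline{\Delta}(A)}{e^\gamma} - o(1)\Big)\log\log y_n.
\end{align*}

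Finally, a single infinite L-chain $D \subset A$ realizing these depths along a subsequence of $(y_n)$ is extracted by the same compactness/diagonalization device as in ESS's proof of Theorem~\ref{thm:ESSchain}. For each $n$ I would fix a maximum L-chain $C_n \subset A \cap [1, y_n]$ of length $K(y_n)$, view the sequence $(C_n)$ as points in the compact product space $(A \cup \{\infty\})^{\N}$ (the one-point compactification of the countable discrete set $A$), and extract a convergent subsequence whose limit is shown to lie in $A^{\N}$, thus defining an infinite L-chain $D \subset A$ with $|D \cap [1, y_{n_j}]| \ge K(y_{n_j})$ along some $n_j \to \infty$; this immediately gives the desired $\limsup$.

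The main obstacle is precisely this last step. The L-multiple tree in $A$ branches infinitely at every node, so König's lemma is not directly available; moreover the chain elements are not a priori bounded as $n$ grows, since the L-chain growth $d_{j+1}\ge 2d_j$ only forces $d^{(n)}_1 \le y_n/2^{K(y_n)-1}$, which itself tends to infinity for $K(y_n)\sim c\log\log y_n$. Carrying out the compactness must therefore follow the ESS-style delicate diagonalization to ensure no coordinate of the limit escapes to the compactification point $\infty$; this is the one place where the argument is not a direct line-by-line translation of the earlier ingredients in the paper.
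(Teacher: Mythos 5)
Your first half is essentially the paper's argument: you stratify $A$ into L-primitive layers by chain-depth (the paper does this via iterated generating sets $A^0=\langle A\rangle$, $A^l=\langle A\setminus\bigcup_{i<l}A^i\rangle$ from Lemma \ref{lem:genset}, which is the same decomposition as your $A_k=\{a: g(a)=k\}$), applies the bound $f(A^i)\le e^\gamma+\eps$ valid for L-primitive subsets of $[x_\eps,\infty)$ (Proposition \ref{prop:Lprimsup}; note the bound for \emph{arbitrary} L-primitive sets is about $1.805$, so the reduction to $A\subset[x_\eps,\infty)$ is needed, though it costs only $O(1)=o(\log\log y)$), and concludes that $A\cap[1,x_j]$ contains elements of depth at least $r_j\approx\frac{\Delta-2\eps}{e^\gamma+\eps}\log\log x_j$. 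Up to here you and the paper agree.

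The gap is exactly where you flag it: passing from long finite chains in $A\cap[1,y_n]$ to a single infinite chain realizing the growth. The compactness/diagonalization you sketch does not work — in $(A\cup\{\infty\})^\N$ a subsequential limit of the maximal chains $C_n$ exists, but nothing prevents every coordinate from escaping to $\infty$, precisely because (as you observe) even the first element $d_1^{(n)}$ is unbounded in $n$; and an infinite chain produced as such a limit would in any case not be guaranteed to have $\gtrsim r_{j}$ elements below $x_j$ for infinitely many $j$. The paper's mechanism is different and is the actual content of the proof. It forms the set $B=\bigcup_j B_j$ with $B_j=A\cap[1,x_j]\setminus\bigcup_{i<r_j}A^i$, i.e.\ the elements of depth at least $r_j$ inside $[1,x_j]$, shows $f(B\cap[1,x_j])\ge \eps\log\log x_j$ so that $B$ has positive upper $\log$ density, and then invokes the L-refinement of Davenport--Erd\H{o}s (Theorem \ref{thm:LDavenErdos}, proved earlier via the generating-set machinery) to extract an \emph{infinite} L-chain $D=(d_k)\subset B$. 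Each $d_k$ then hangs at the bottom of a finite L-chain of length $\ge r_{J(d_k)}$, and the infinite chain $C$ is obtained by splicing the portion of the $k$-th finite chain lying in $(d_{k-1},d_k]$; the splice is legitimate because $c_i^{(k)}$ and $d_{k-1}$ are both L-divisors of $d_k$, so the trichotomy of Lemma \ref{lem:trichot} together with $c_i^{(k)}>d_{k-1}$ forces $c_i^{(k)}\in{\rm L}_{d_{k-1}}$. A small further argument (the trivial bound $c_i^{(k)}>i$) shows the truncation at $d_{k-1}$ discards at most $\eps\, r_{J(d_k)}$ elements. To complete your proof you would need to replace the compactness step with something of this kind — in particular you need Theorem \ref{thm:LDavenErdos} (or its classical analogue) as an input, which your outline never uses.
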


In view of Proposition \ref{prop:Lprimsup} we believe that the lower bound $\overline{\Delta}(A)/e^\gamma$ above is best possible for {\rm L}-divisibility chains, though we are unable to settle this. Notably, this contrasts the situation in Theorem \ref{thm:ESSchain}, as Erd\H{o}s--S\'ark\"ozy--Szemer\'edi conjectured $\overline{\Delta}(A)/e^\gamma$ in \eqref{eq:ESSloglog} might be improved to $\overline{\Delta}(A)$, which would be best possible for divisibility chains, if true \cite[eq. (5)]{ESS66}.

\section{Preliminaries on {\rm L}-primitive sets}

Recall the set of {\rm L}-multiples ${\rm L}_a:=\{ba\in\N \,:\,p\mid b \Rightarrow p\ge P(a)\}$ from \eqref{eq:defLa}. In particular $a\in {\rm L}_a$ for $b=1$, and $p(b) \ge P(a)$ for $b>1$. For $A\subset \N$ define ${\rm L}_A :=\bigcup_{a\in A}{\rm L}_a$. Also let $A_a =A\cap {\rm L}_a$ so that $\N_a={\rm L}_a$ and $A_q = \{a\in A : p(a)=q\}$ for prime $q$.\footnote{Note the notation for $A_q$ differs slightly from what is used in \cite{EZ}, \cite{LPprim}}

Observe that $a\in {\rm L}_{a'}$ if and only if ${\rm L}_a\subset {\rm L}_{a'}$, as well as the following trichotomy.

\begin{lemma}\label{lem:trichot}
For any integers $a,a'>1$, if ${\rm L}_a\cap {\rm L}_{a'}\neq\emptyset$ then $a\in {\rm L}_{a'}$ or $a'\in {\rm L}_a$. Thus ${\rm L}_a\cap {\rm L}_{a'}=\emptyset$ or ${\rm L}_a\subset {\rm L}_{a'}$ or ${\rm L}_a\supset {\rm L}_{a'}$.
\end{lemma}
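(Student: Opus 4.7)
The plan is to prove the first statement (``$a \in {\rm L}_{a'}$ or $a' \in {\rm L}_a$'') directly, after which the trichotomy of sets of ${\rm L}$-multiples follows at once from the elementary equivalence $a \in {\rm L}_{a'} \iff {\rm L}_a \subset {\rm L}_{a'}$ noted just before the lemma.

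First I would pick any element $n \in {\rm L}_a \cap {\rm L}_{a'}$ and write $n = ab = a'b'$, where $p(b) \geq P(a)$ when $b > 1$, and $p(b') \geq P(a')$ when $b' > 1$. The desired conclusion is symmetric in $a$ and $a'$, so I may assume $P(a) \leq P(a')$, with the goal of establishing $a' \in {\rm L}_a$.

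The argument proceeds by comparing $p$-adic valuations of the two factorizations of $n$. For every prime $p < P(a)$, neither $b$ nor $b'$ is divisible by $p$, so $v_p(a) = v_p(n) = v_p(a')$; in particular any prime dividing $a'/a$ must be at least $P(a)$. For primes $P(a) \leq p < P(a')$, we still have $p \nmid b'$, giving $v_p(a') = v_p(n) \geq v_p(a)$. For primes $p > P(a)$ lying beyond this range, $v_p(a) = 0 \leq v_p(a')$ trivially. Assembling these valuation inequalities shows $a \mid a'$ with $a'/a$ supported on primes $\geq P(a)$, i.e.\ $a' \in {\rm L}_a$.

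The only remaining subtlety is the edge case $P(a) = P(a') =: P$, where $p = P$ is not covered by the strict inequality $P(a) < P(a')$ used above. I would handle this by a further case split on the $P$-adic valuations: if $v_P(a) \leq v_P(a')$, the preceding argument still concludes $a' \in {\rm L}_a$, whereas if $v_P(a) > v_P(a')$, swapping the roles of $a$ and $a'$ yields $a \in {\rm L}_{a'}$ instead. The proof is essentially bookkeeping through prime exponents, and I expect this equal-largest-prime scenario to be the only easy-to-miss step.
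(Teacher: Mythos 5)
Your proof is correct and rests on the same structural fact as the paper's: because the cofactors $b$ and $b'$ occupy the top of the sorted prime factorization of $n = ab = a'b'$, the two factorizations nest. The paper expresses this as ``$b \mid b'$ or $b' \mid b$'' and divides through, whereas you compare $p$-adic valuations of $a$ and $a'$ directly; your separate treatment of the edge case $P(a)=P(a')$ is handled correctly and corresponds in the paper to the choice of which cofactor divides the other.
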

\begin{proof}
Suppose $ba=b'a' \in {\rm L}_a\cap {\rm L}_{a'}$. If $b=1$ or $b'=1$ then $a\in {\rm L}_{a'}$ or $a'\in {\rm L}_a$. Otherwise $b,b'>1$, so $P(a) \le p(b)$ and $P(a')\le p(b')$ imply $b\mid b'$ or $b'\mid b$. Thus $a' = a(b/b')\in {\rm L}_a$ or $a = a'(b'/b)\in {\rm L}_{a'}$ as well.
\end{proof}

As such we see $A$ is {\rm L}-primitive if and only if the sets $\{{\rm L}_a\}_{a\in A}$ are pairwise disjoint.

\begin{corollary}\label{cor:Lsplit}
If $A$ is an {\rm L}-primitive set, then ${\rm L}_a$ and ${\rm L}_{a'}$ are disjoint for distinct $a,a'\in A$.
\end{corollary}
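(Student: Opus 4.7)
The plan is to deduce this directly from Lemma \ref{lem:trichot} combined with the definition of L-primitive; the statement is essentially a contrapositive restatement of the trichotomy.

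First I would fix two distinct elements $a, a' \in A$. By the definition of L-primitive, $a' \notin L_a$. Since L-primitivity is symmetric in $a$ and $a'$ (applying the definition with the roles reversed), we also have $a \notin L_{a'}$. Next I would invoke Lemma \ref{lem:trichot}, which states that if $L_a \cap L_{a'} \neq \emptyset$, then $a \in L_{a'}$ or $a' \in L_a$. The contrapositive says: if both $a \notin L_{a'}$ and $a' \notin L_a$, then $L_a \cap L_{a'} = \emptyset$. Combining with the previous step yields the claim.

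There is essentially no obstacle here, since the work has been done in Lemma \ref{lem:trichot}. The only small thing to watch is to make sure to apply the definition of L-primitive symmetrically (both $a' \notin L_a$ and $a \notin L_{a'}$), since the statement ``$a' \notin L_a$ for all distinct $a, a' \in A$'' already includes the symmetric version by simply swapping the dummy labels in the quantifier. The proof is thus a one-line consequence of Lemma \ref{lem:trichot}, and I would present it as such.
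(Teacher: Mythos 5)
Your proof is correct and matches the paper's (implicit) argument exactly: the corollary is stated as an immediate consequence of Lemma \ref{lem:trichot} together with the symmetry of the L-primitivity condition, which is precisely the contrapositive you spell out. No issues.
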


Recall ${\rm L}_a$ has natural density ${\rm d}({\rm L}_a) = \frac{1}{a}\prod_{p<P(a)}(1-\frac{1}{p})$. And by Mertens' product theorem $\prod_{p<x}(1-\frac{1}{p})\sim 1/e^\gamma \log x$, where $\gamma=.57721\cdots$ is the Euler-Mascheroni constant. By a show argument below, we relate $f(A)$ to density of {\rm L}-multiples. This is essentially based on Erd\H{o}s \cite{Erdos35} (also see \cite[Lemma 1]{ESS67}, \cite[Proposition 2.1]{LPprim}).

\begin{lemma}\label{lem:LMertfA}
For an {\rm L}-primitive set $A$ and an integer $1<n\notin A$, we have $f(A_n) < e^\gamma\,{\rm d}({\rm L}_n)$.
\end{lemma}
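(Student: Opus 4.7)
The plan is to combine a density inequality coming from ${\rm L}$-primitivity with a pointwise Mertens-type bound. For the first, Corollary \ref{cor:Lsplit} applied to the ${\rm L}$-primitive subset $A_n\subseteq {\rm L}_n$ gives that $\{{\rm L}_a\}_{a\in A_n}$ is a pairwise disjoint family of subsets of ${\rm L}_n$ (each ${\rm L}_a\subseteq{\rm L}_n$ since $a\in {\rm L}_n$), and summing natural densities yields
\begin{align*}
\sum_{a\in A_n}{\rm d}({\rm L}_a)\ \le\ {\rm d}({\rm L}_n).
\end{align*}
For the second, I would show the pointwise bound $f(a)< e^\gamma\,{\rm d}({\rm L}_a)$ for every $a\in A_n$. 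Granting these, the lemma follows from
\begin{align*}
f(A_n)\ =\ \sum_{a\in A_n}f(a)\ \le\ e^\gamma\sum_{a\in A_n}{\rm d}({\rm L}_a)\ \le\ e^\gamma\,{\rm d}({\rm L}_n),
\end{align*}
the first inequality being strict whenever $A_n\neq\emptyset$ (else $f(A_n)=0<e^\gamma\,{\rm d}({\rm L}_n)$ trivially).

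The hypothesis $n\notin A$ enters the pointwise bound crucially. Any $a\in A_n$ has $a=bn$ with $b>1$ and $p(b)\ge P(n)$, so $P(a)=P(b)\ge P(n)$ and $a/P(a)=(b/P(b))\,n\ge n\ge 2$; in particular $a$ is composite and $\log a\ge \log 2+\log P(a)$. Hence the desired inequality $f(a)<e^\gamma\,{\rm d}({\rm L}_a)$ reduces, via this lower bound on $\log a$, to the Mertens-type estimate
\begin{align*}
e^\gamma\prod_{p<q}\Big(1-\tfrac{1}{p}\Big)\,\bigl(\log 2+\log q\bigr)\ >\ 1,
\end{align*}
to be applied at $q=P(a)$ for every prime $q$.

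The main obstacle is justifying this last estimate. Mertens' third theorem gives $e^\gamma\prod_{p<q}(1-1/p)\sim 1/\log q$, so the left side is asymptotic to $1+\log 2/\log q$, tending to $1$ from above; however, the convergence is oscillatory, and the cleaner statement $e^\gamma\prod_{p<q}(1-1/p)\log q>1$ is known to fail for infinitely many primes $q$ (indeed it already fails at $q=2$, as recalled around \eqref{eq:Mertprim}). The slack $\log 2$ coming from compositeness of $a$ is precisely what rescues the bound: it contributes a term of size $\log 2/\log q$, which dominates the $O(1/\log^2 q)$ Mertens error. I would verify the inequality for small primes $q$ by direct numerical computation, and for large $q$ invoke the Rosser--Schoenfeld explicit lower bound $\prod_{p\le q}(1-1/p)>\tfrac{e^{-\gamma}}{\log q}\bigl(1-\tfrac{1}{2\log^2 q}\bigr)$, after passing from $\prod_{p\le q}$ to $\prod_{p<q}$ by the factor $(1-1/q)^{-1}$.
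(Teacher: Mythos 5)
Your proposal is correct and follows essentially the same route as the paper: disjointness of the $\{{\rm L}_a\}_{a\in A_n}$ from Corollary \ref{cor:Lsplit} gives $\sum_{a\in A_n}{\rm d}({\rm L}_a)\le{\rm d}({\rm L}_n)$, and the pointwise bound $f(a)<e^\gamma{\rm d}({\rm L}_a)$ comes from $a\ge 2P(a)$ (valid since $n\notin A$ forces every $a\in A_n$ to be a proper ${\rm L}$-multiple of $n$, hence composite) together with the Mertens-type inequality $\prod_{p<q}\tfrac{p}{p-1}<e^\gamma\log(2q)$. The only cosmetic difference is that the paper quotes this last inequality directly from Rosser--Schoenfeld \cite[Theorem 7]{RS1}, whereas you rederive it from their explicit lower bound plus a finite check; both are fine.
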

\begin{proof}
We may assume $A=A_n$ is finite, since $f(A) = \lim_{x\to\infty} f(A\cap[1,x])$. As $n\notin A$ all elements of $A$ are composite. Also $A$ is {\rm L}-primitive so ${\rm d}({\rm L}_A)=\sum_{a\in A}{\rm d}({\rm L}_a)$ by Corollary \ref{cor:Lsplit}. 
Next, Theorem 7 in \cite{RS1} implies $\prod_{p<x}\frac{p}{p-1} < e^\gamma\log(2x)$ for all $x>1$. Thus for any composite integer $a>1$, we have $a>2P(a)$ so that
\begin{align*}
f(a) = \frac{1}{a\log a} \ \le \ \frac{1}{a\log 2P(a)} \ < \ \frac{e^\gamma}{a}\prod_{p<P(a)}\Big(1-\frac{1}{p}\Big) \ = \ e^\gamma\,{\rm d}({\rm L}_a).
\end{align*}
Hence $f(A) = \sum_{a\in A}f(a) < e^\gamma\,{\rm d}({\rm L}_A) \le e^\gamma\,{\rm d}({\rm L}_n)$ since $A\subset {\rm L}_n$.
\end{proof}

We shall also need a technical refinement of Lemma \ref{lem:LMertfA}. For this, we rewrite Mertens' product theorem as $\mu_x\sim 1$, where we denote
\begin{align}\label{eq:muxdef}
\mu_x := e^\gamma \log x\prod_{p<x}\Big(1-\frac{1}{p}\Big).
\end{align}
In particular, for a prime $q$ we have
\begin{align}\label{eq:fqdLq}
f(q) = \frac{1}{q\log q} = \frac{1}{q}\frac{e^\gamma}{\mu_q}\prod_{p<q}\Big(1-\frac{1}{p}\Big) =\frac{e^\gamma}{\mu_q}{\rm d}({\rm L}_q).
\end{align}
We have the following explicit bounds for $\mu_x$, which critically are monotonic. We give upper bounds which hold on real $x\in \R$, but for lower bounds it turns out it suffices to restrict to the subsequence of primes $q\in\mathcal P$.
\begin{lemma}[Monotonic bounds] \label{lem:monotonic}
For $q\in\mathcal P$ and $x\in \R$, define
\begin{align*}
m_q := \inf_{\substack{p\ge q\\ p\in \mathcal P}}\mu_p, \quad\text{and}\quad
M_x := \sup_{\substack{y\ge x\\ y\in \R}}\mu_y.
\end{align*}
Then we have
\[m_q \ge \begin{cases}
\mu_7 = 0.9242\cdots & q \le 7\\
\mu_{19} = 0.9467\cdots & 7< q \le 300\\
1- \frac{1}{2(\log q)^2} & q> 300.
\end{cases}
\quad\text{and}\quad
M_x \le \begin{cases}
\mu_2 = 1.235\cdots & x \le 2\\
1 + \frac{1}{2\log(2\cdot10^9)^2} & 2< x \le 2\cdot10^9\\
1 + \frac{1}{2(\log x)^2} & x> 2\cdot10^9.
\end{cases}
\]
\end{lemma}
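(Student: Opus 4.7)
The plan is to obtain the four estimates in two passes: an \emph{asymptotic} step covering large $q$ and large $x$ via explicit Mertens estimates, and a \emph{bounded} step covering small $q$ and intermediate $x$ via direct numerical verification on finitely many primes.

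For the asymptotic cases, I would invoke an explicit version of Mertens' third theorem---e.g.\ Rosser--Schoenfeld \cite{RS1} or a sharper refinement due to Dusart---of the general form $1 - \tfrac{1}{2\log^2 x} < \mu_x < 1 + \tfrac{1}{2\log^2 x}$ on an effective range. Since $\log y \ge \log q$ whenever $y \ge q$, the infimum of $\mu_p$ over primes $p \ge q$ is then at least $1 - 1/(2\log^2 q)$ for $q > 300$ (the cutoff $300$ also ensures $1 - 1/(2\log^2 q) > \mu_{19}$, so the two cases of $m_q$ glue together), and the supremum of $\mu_y$ over reals $y \ge x$ is at most $1 + 1/(2\log^2 x)$ for $x > 2\cdot 10^9$.

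For the bounded cases, the key observation is that on an open interval $(p, p')$ between consecutive primes, $\mu_y = \mu_p (1 - 1/p)\log y/\log p$ is strictly increasing from $\mu_p(1-1/p)$ up to $\mu_{p'}$, with $\mu_y = \mu_p$ attained at the endpoint $y = p$ itself. Hence on any finite range the extrema of $\mu_y$ occur at primes. The middle cases thus reduce to computing $\mu_p$ at each prime in the corresponding range: for the small-$q$ cases one verifies $\mu_7 = \min_{p \in \mathcal P}\mu_p$ and $\mu_{19} = \min_{p \ge 11}\mu_p$ by evaluating $\mu$ at the finitely many primes up to $300$, with the asymptotic lower bound closing off the tail $p > 300$ with room to spare. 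For the upper bound, one verifies $\mu_p \le 1 + 1/(2\log(2\cdot 10^9))^2$ for all primes $p \le 2\cdot 10^9$, and analogously $\mu_p < \mu_2$ for all primes $p \ge 3$, which establishes $M_x \le \mu_2$ for $x \le 2$.

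The principal obstacle is the computational certification that $\mu_p$ remains below a constant of order $1 + 10^{-3}$ uniformly across the roughly $10^8$ primes $p \le 2\cdot 10^9$, which requires tracking the partial Mertens product $\prod_{p<q}(1-1/p)$ with adequate numerical precision and a certified prime enumeration. The threshold $2\cdot 10^9$ is presumably the natural crossover between the range amenable to brute-force enumeration and the range in which the effective Mertens estimate already implies the stated asymptotic form.
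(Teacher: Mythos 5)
Your proposal is correct and follows essentially the same route as the paper: the Rosser--Schoenfeld explicit Mertens bounds handle $q>300$ and $x>2\cdot10^9$, the piecewise monotonicity of $\mu_y$ on intervals between consecutive primes reduces the suprema to values at primes, and a finite computation (the paper checks $\mu_p<1$ for the first $10^8$ odd primes, noting $p_{10^8}\ge 2\cdot10^9$, plus a small table identifying where $\inf_{p\ge q}\mu_p$ is attained) settles the bounded ranges. One small caution: the real-variable \emph{infimum} of $\mu_y$ is not attained at primes --- on $(p,p')$ it is the unattained right-limit $\mu_p(1-1/p)<\mu_p$ --- which is precisely why the lemma defines $m_q$ as an infimum over primes only; your argument is unaffected since you only take suprema over reals.
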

\begin{proof}
First, Rosser-Schoenfeld \cite[Theorem 7]{RS1} implies the product over primes $p< x$ is bounded in between
\begin{align*}
1-\frac{1}{2(\log x)^2} \ \overset{(x>285)}{\le} \ e^\gamma \log x\prod_{p< x}\Big(1-\frac{1}{p}\Big)
\ \overset{(x>1)}{\le} \ 1+\frac{1}{2(\log x)^2}.
\end{align*}

Note $\mu_x$ is increasing on $x\in(p,p']$ for consecutive primes $p,p'$. So the upper bound follows by computing $\mu_p<1$ for the first $10^8$ odd primes $p$ (note $p_{10^8} \ge 2\cdot10^9$). Hence $\mu_x<1$ for real $2< x\le 2\cdot10^9$. Below we display $\mu_q$ for the first few primes $q$, rounded to 4 significant digits.
\[\begin{array}{cc|cc|cc|cc|cc}
q & \mu_q & q & \mu_q & q & \mu_q & q & \mu_q & q & \mu_q\\
\hline
2  &  1.235  & 31  &{\bf0.9660}& 73  &  0.9766  & 127  &  0.9902  & 179  &  0.9909  \\
3  &  0.9784  & 37  &  0.9831  & 79  &  0.9809  & 131  &  0.9887  & 181  &  0.9874  \\ 
5  &  0.9555  & 41  &  0.9836  & 83  &  0.9795  & 137  &  0.9902  & 191  &  0.9921  \\
7 &{\bf0.9242}& 43  &  0.9720  & 89  &  0.9829  & 139  &  0.9858  & 193  &  0.9889  \\ 
11  &  0.9762 & 47  &{\bf0.9718}  & 97  &  0.9906  & 149  &  0.9925  & 197  &  0.9876  \\ 
13  &  0.9492 & 53  &  0.9808  & 101  &  0.9890  & 151  &  0.9885  & 199  &{\bf 0.9844}  \\ 
17  &  0.9679 & 59  &  0.9883  & 103  &  0.9834  & 157  &  0.9896  & &  \\
19  &{\bf0.9467} & 61  &  0.9795  & 107  &  0.9818  & 163  &  0.9906  & & \\
23 &{\bf0.9551}& 67 &  0.9854  & 109  &  0.9765  & 167  &  0.9892  &  & \\
29  &  0.9811 & 71  &  0.9841  & 113 &{\bf0.9749}& 173  &  0.9900  &  & 
\end{array}\]
The lower bound follows by identifying the primes $q$ for which $\mu_q = \inf_{p\ge q} \mu_p$ (in bold above), and then computing $\mu_{199} < \mu_p$ for $199<p\le 300$, as well as checking $\mu_{199} < 0.9846 < 1-\frac{1}{2(\log x)^2}$ for $x>300$. (In practice we shall only need $\mu_q$ for $q=7,19$.)
\end{proof}

We may now prove a technical refinement of Lemma \ref{lem:LMertfA} using $\mu_q$.

\begin{lemma}\label{lem:fMertPnu}
Let $A$ be an {\rm L}-primitive set. Take $v\ge0$, an integer $n\notin A$, and denote $q=P(n)$. If $P(a)^{1+v}\le a$ for all $a\in A_n$, then
\begin{align*}
f(A_n) = \sum_{a\in A_n}\frac{1}{a\log a} \ &\le \ \frac{e^{\gamma}}{m_q}\,\frac{{\rm d}({\rm L}_{A_n})}{1+v}.
\end{align*}
\end{lemma}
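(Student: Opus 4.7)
The plan is to follow the skeleton of Lemma \ref{lem:LMertfA}, replacing the crude composite bound $a > 2P(a)$ (which came from Rosser--Schoenfeld) with the sharper pointwise hypothesis $\log a \ge (1+v)\log P(a)$. First I would reduce to the case that $A_n$ is finite by writing $f(A_n) = \lim_{x\to\infty} f(A_n\cap[1,x])$, exactly as in the proof of Lemma \ref{lem:LMertfA}.

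The heart of the argument is a pointwise bound on $f(a)$ for each $a \in A_n$. The hypothesis $P(a)^{1+v}\le a$ gives
\[
f(a) \;=\; \frac{1}{a\log a} \;\le\; \frac{1}{(1+v)\,a\log P(a)}.
\]
Using the definition $\mu_x = e^{\gamma}\log x\prod_{p<x}(1-1/p)$ from \eqref{eq:muxdef}, I would rewrite $\frac{1}{a\log P(a)}$ as $\frac{e^{\gamma}}{\mu_{P(a)}}{\rm d}({\rm L}_a)$, mimicking the manipulation in \eqref{eq:fqdLq}. Since $n\notin A$, every $a\in A_n$ is a \emph{proper} ${\rm L}$-multiple of $n$, so $a=bn$ with $p(b)\ge P(n)=q$; hence $P(a)\ge q$, and Lemma \ref{lem:monotonic} then supplies the uniform monotonic lower bound $\mu_{P(a)}\ge m_q$. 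Chaining these inequalities yields the pointwise estimate
\[
f(a) \;\le\; \frac{e^{\gamma}}{(1+v)\,m_q}\,{\rm d}({\rm L}_a).
\]

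The last step is to sum over $a\in A_n$: since $A$ is ${\rm L}$-primitive, so is $A_n$, and Corollary \ref{cor:Lsplit} makes the sets $\{{\rm L}_a\}_{a\in A_n}$ pairwise disjoint, so $\sum_{a\in A_n}{\rm d}({\rm L}_a) = {\rm d}({\rm L}_{A_n})$. Adding the pointwise bound produces the claimed inequality. I do not anticipate a real obstacle here; the argument is essentially a quantitative sharpening of Lemma \ref{lem:LMertfA} in which $\log(2P(a))$ is replaced by $(1+v)\log P(a)$ and the implicit constant is tracked through the explicit ratio $\mu_{P(a)}/m_q$. The only mild care point is that a prime $a\in A_n$ forces $v=0$ (otherwise $P(a)^{1+v}=a^{1+v}>a$ would violate the hypothesis), but in this case the same estimate goes through verbatim, so no separate case analysis is required.
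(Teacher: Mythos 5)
Your argument is correct and matches the paper's proof essentially step for step: reduce to the finite case, apply the pointwise bound $\log a \ge (1+v)\log P(a)$, rewrite via $\mu_{P(a)}$ as $e^{\gamma}\,{\rm d}({\rm L}_a)/(\mu_{P(a)}(1+v))$, use $P(a)\ge q$ with the monotonic bound $\mu_{P(a)}\ge m_q$, and sum over the disjoint sets ${\rm L}_a$ via Corollary \ref{cor:Lsplit}. The only cosmetic difference is that the paper simply observes all elements of $A_n$ are composite (since $n\notin A$ forces $a=bn$ with $b>1$), so your aside about prime $a$ is moot.
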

\begin{proof}
We may assume $A = A_n$ is finite, since $f(A) = \lim_{x\to\infty} f(A\cap[1,x])$. As $n\notin A$ all elements of $A$ are composite. Also $A$ is {\rm L}-primitive so ${\rm d}({\rm L}_A)=\sum_{a\in A}{\rm d}({\rm L}_a)$ by Corollary \ref{cor:Lsplit}. Moreover $(1+v)\log P(a) \le \log a$ for all $a\in A$. Thus by definition of $\mu_{P(a)}$ in \eqref{eq:muxdef},
\begin{align*}
\frac{1}{a\log a} \ \le \ \frac{1}{1+v}\frac{1}{a\log P(a)} = \frac{e^\gamma}{\mu_{P(a)}}\frac{1}{(1+v)a}\prod_{p<P(a)}\Big(1-\frac{1}{p}\Big) = \frac{e^\gamma}{\mu_{P(a)}}\frac{{\rm d}({\rm L}_a)}{1+v}.
\end{align*}

By monotonicity $\mu_{P(a)}\ge m_{P(a)}\ge m_q$ for $a\in A\subset {\rm L}_n$. Hence we conclude
\begin{align*}
f(A) = \sum_{a\in A}\frac{1}{a\log a} \ \le \ \frac{e^\gamma}{m_q}\frac{1}{1+v}\sum_{a\in A}{\rm d}({\rm L}_a) = \frac{e^\gamma}{m_q}\frac{{\rm d}({\rm L}_A)}{1+v}.
\end{align*}
\end{proof}

\section{Primitive sets}
Given $v\in(0,1)$, we shall be interested in elements $a\in A$ for which $P(a)^{1+v}>a$, and their multiples $ac$, where $c\in C_a^v$ for
\begin{align}\label{eq:defCa}
C_a^v \ := \ \big\{c\in\N \;: \;  p\mid c \implies p\in [P(a^*), P(a^*)^{1/\sqrt{v}})\big\}.
\end{align}
Note $c=1\in C_a^v$. Recall $a^*=a/P(a)$, so $P(a^*)$ is the second largest prime of $a$. Also if $1<c\in C_a^v$ then $P(c)\le P(a^*)^{1/\sqrt{v}}$ is markedly smaller than $P(a) \ge P(a^*)^{1/v}$.

The following key lemma provides an upgrade to Corollary \ref{cor:Lsplit} in the case when $A$ is primitive, not just ${\rm L}$-primitive. Namely, the ${\rm L}_{ac}$ are disjoint, and so the larger set $\{ac : a\in A, c\in C_a^v\}$ is ${\rm L}$-primitive.
\begin{lemma}\label{lem:disjointLac}
Let $A$ be a primitive set of composite numbers, and take $v\in (0,1)$. If $P(a)^{1+v}>a$ for all $a\in A$, then the collection of sets ${\rm L}_{ac}$, ranging over $a\in A, c\in C_a^v$, are pairwise disjoint.
\end{lemma}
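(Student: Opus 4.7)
The plan is to suppose ${\rm L}_{ac}\cap {\rm L}_{a'c'}\ne \emptyset$ for two pairs $(a,c)\ne(a',c')$ with $a,a'\in A$, $c\in C_a^v$, $c'\in C_{a'}^v$, and apply Lemma \ref{lem:trichot} to reduce (WLOG) to the equation $a'c' = b\cdot ac$ for some integer $b\ge 1$ all of whose prime factors are $\ge P(ac)$. I then aim to show that either $(a,c)=(a',c')$ or the equation forces $a\mid a'$ (or $a'\mid a$), contradicting primitivity of $A$.

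First I would extract the structural consequences of the hypothesis $P(a)^{1+v}>a$. Since $a\ge P(a)P(a^*)$, this yields $P(a^*)<P(a)^{v}<P(a)$, so $P(a)$ divides $a$ exactly once and $P(a)>P(a^*)^{1/v}>P(a^*)^{1/\sqrt v}>P(c)$ for every $c\in C_a^v$. Hence $P(ac)=P(a)$ with multiplicity one, and analogously for $a'c'$. Matching the largest prime of $a'c'=bac$ then gives $P(a')=\max(P(b),P(a))\ge P(a)$.

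Next I would handle the case $b=1$: then $ac=a'c'$, forcing $P(a)=P(a')$, and dividing both sides by this common top prime yields $a^*c=a'^*c'$. A prime-by-prime multiplicity comparison --- splitting on whether $P(a^*)=P(a'^*)$ or not, and using that the primes of $a^*,a'^*$ are bounded by $P(a^*),P(a'^*)$ while the primes of $c,c'$ lie in the narrow intervals $[P(a^*),P(a^*)^{1/\sqrt v})$ and $[P(a'^*),P(a'^*)^{1/\sqrt v})$ respectively --- shows in each subcase that one of $a^*,a'^*$ divides the other. Primitivity of $A$ then forces $a^*=a'^*$, hence $a=a'$, and the equation gives $c=c'$.

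Finally I would rule out $b\ge 2$. If $P(a')=P(a)$, then $P(b)=P(a)$ is forced from $P(a)\le P(b)\le P(a')$; but comparing $v_{P(a)}$ on both sides of $a'c'=bac$ gives $1=v_{P(a)}(b)+1$, a contradiction. If $P(a')>P(a)$, then $P(b)=P(a')$ with $v_{P(a')}(b)=1$; writing $b=P(a')\tilde b$ with primes of $\tilde b$ in $[P(a),P(a'))$ and dividing by $P(a')$, the reduced equation is
\[
a'^*\,c' \;=\; \tilde b\cdot a\,c.
\]
An analogous prime-by-prime analysis on this reduced equation, still exploiting that primes of $c'$ are confined to $[P(a'^*),P(a'^*)^{1/\sqrt v})$, should yield $a\mid a'^*$ and hence $a\mid a'$, again contradicting primitivity of $A$.

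\textbf{Main obstacle.} The most delicate step will be the case $b\ge 2$ with $P(a)<P(a')$: in the reduced equation $a'^*c'=\tilde b ac$ the auxiliary factor $\tilde b$ contributes primes in the range $[P(a),P(a'))$, which may overlap the range $[P(a'^*),P(a'^*)^{1/\sqrt v})$ occupied by the primes of $c'$. Bookkeeping which factor accounts for each prime, and showing that no ``leftover'' can prevent $a\mid a'^*$, is where care is needed.
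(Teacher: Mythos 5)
Your setup and the easy cases are fine: the reduction via Lemma \ref{lem:trichot} to $a'c'=b\cdot ac$ with $p(b)\ge P(ac)=P(a)$, the observation that $P(a)\|a$ and $P(ac)=P(a)$, the case $b=1$ (where $P(a)=P(a')$ and a multiplicity comparison in $a^*c=(a')^*c'$ does force $(a,c)=(a',c')$), and the case $b\ge2$ with $P(a')=P(a)$ (killed by comparing the exponent of $P(a)$ on both sides) all go through. But the endgame you propose for the remaining case --- $b\ge 2$ and $P(a')>P(a)$, reduced to $(a')^*c'=\tilde b\,ac$ --- has a genuine gap. You aim to conclude $a\mid (a')^*$ and invoke primitivity, but this divisibility is simply not forced: the prime $P(a)$, which divides the right side exactly once through the factor $a$, can be supplied on the left side entirely by $c'$, since the primes of $c'$ range over $[P((a')^*),P((a')^*)^{1/\sqrt v})$ and nothing in your bookkeeping excludes $P((a')^*)\le P(a)<P((a')^*)^{1/\sqrt v}$. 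In that configuration $P(a)\nmid a'$, so $a\nmid a'$ (and $a'\nmid a$ since $P(a')>P(a)$), and primitivity of $A$ gives no contradiction at all. This is precisely the ``main obstacle'' you flag, and it cannot be resolved by divisibility bookkeeping alone.

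What actually closes this case --- and it is the heart of the paper's proof, equations \eqref{eq:qPastar1} and \eqref{eq:qPastar2} --- is a quantitative sandwich on the top primes, using \emph{both} exponents in play. In the bad configuration above one gets, on one side, $P(a)<P((a')^*)^{1/\sqrt v}$ (because $P(a)$ is a prime of $c'$), and on the other side the remaining primes of $(a')^*$ and of $c'$ must be accounted for by $a^*$, $c$, and $\tilde b$, which forces $P((a')^*)$ below $P(a^*)^{1/\sqrt v}$; chaining these gives $P(a)<P(a^*)^{1/v}$, contradicting the hypothesis $P(a)^{1+v}>a\ge P(a)P(a^*)$, i.e.\ $P(a)>P(a^*)^{1/v}$. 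This is exactly why the exponent $1/\sqrt v$ appears in the definition \eqref{eq:defCa} of $C_a^v$: it is the largest exponent for which $1/\sqrt{v}\cdot 1/\sqrt{v}$ still beats $1/v$ coming from the hypothesis on $A$. Your proposal never uses the numerical value of that exponent (only that the intervals are ``narrow''), so it cannot recover this step; without it the lemma's critical case remains open.
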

\begin{proof}
Suppose ${\rm L}_{ac}\cap {\rm L}_{a'c'}\neq\emptyset$ for some $a, a'\in A$ and $c\in C_a^v$, $c'\in C_{a'}^v$. Without loss, by Lemma \ref{lem:trichot} we may assume $ac\in {\rm L}_{a'c'}$. Note if $c=1$ then $a\in {\rm L}_{a'c'}$ implies $a'\mid a'c'\mid a$, which forces $a=a'$ and $c'=1$ by primitivity of $A$. So assuming $(a,c)\neq (a',c')$ we deduce $c>1$. 

We factor $ac=p_1\cdots p_k$ into primes $p_1\ge\cdots\ge p_k$, so $ac\in {\rm L}_{a'c'}$ implies $a'c'=p_j\cdots p_k$ for some index $1<j<k$. Since $P(a)>P(c),p(c)\ge P(a^*)$ we also have $a^* = p_i\cdots p_k$ for some $2<i\le k$. If $i\le j$ then $a'c'\mid a^*$ so $a'\mid a$, contradicting $A$ as primitive. Hence $i>j$ so $a^*\mid a'c'$. Write $da^*=a'c'$ where $d=p_j\cdots p_{i-1}$, and note $P(d)=p_j=P(a')$. By definition of $1<c\in C_a^v$, we have
\begin{align}\label{eq:qPastar1}
p_j = P(d) \le P(c)<  P(a^*)^{1/\sqrt{v}}.
\end{align}

Recall $P(a')^{v}> (a')^* \ge P((a')^*)$ for $a'\in A$. Now consider cases $c'>1$ and $c'=1$. When $1<c'\in C_{a'}^v$, we have $P(c')=p_{j+1}\ge p_i= P(a^*)$. Thus
\begin{align}\label{eq:qPastar2}
p_j = P(a') > P((a')^*)^{1/v} > P(c')^{1/\sqrt{v}} \ge P(a^*)^{1/\sqrt{v}}.
\end{align}
But \eqref{eq:qPastar2} contradicts \eqref{eq:qPastar1}, so ${\rm L}_{ac}$ and ${\rm L}_{a'c'}$ are disjoint. 

Similarly when $c'=1$, we have $P((a')^*) = p_{j+1} \ge p_i = P(a^*)$ and so
\begin{align*}
p_j = P(a') > P((a')^*)^{1/v} \ge P(a^*)^{1/v}.
\end{align*}
This also contradicts \eqref{eq:qPastar1} (indeed $v<\sqrt{v}$). Hence ${\rm L}_{ac}$ and ${\rm L}_{a'}$ are disjoint in both cases.
\end{proof}

\begin{remark}
The exponent $1/\sqrt{v}$ in the definition of $C_a^v$ in \eqref{eq:defCa} is chosen as large as possible, constrained by the final steps \eqref{eq:qPastar1}, \eqref{eq:qPastar2} above. If one established a larger exponent in Lemma \ref{lem:disjointLac}, this would improve the final savings factor $\int_0^1\frac{\dd}{\dd v}[v^{1/2}]\frac{\dd{v}}{1+v} = \pi/4$. 
\end{remark}

In the following proposition, we use Lemma \ref{lem:disjointLac} in order to bound the density of ${\rm L}_{A_n}$ by essentially a savings factor $\sqrt{v}$ from the trivial bound ${\rm d}({\rm L}_{n})$, when $P(a)^{1+v}>a$ for all $a\in A_n$.

\begin{proposition}\label{prop:maindensity}
Let $A$ be a finite primitive set. Take $v\in (0,1)$, an integer $n>1$ with $n\notin A$, and denote $q=P(n)$. If $P(a)^{1+v}>a$ for all $a\in A_n$ then
\begin{align}\label{eq:maindensity}
{\rm d}({\rm L}_{A_n}) \ \le \ \sqrt{v}\; r_q\;{\rm d}({\rm L}_n)
\end{align}
for the ratio $r_q := M_q/m_q$ when $q\ge3$, and $r_2:=r_3$.
\end{proposition}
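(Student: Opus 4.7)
The strategy is to apply Lemma \ref{lem:disjointLac} to the subset $A_n \subset A$. Since $n \notin A$, every $a \in A_n$ is a proper ${\rm L}$-multiple of $n>1$, hence composite, and $A_n$ inherits primitivity from $A$. The hypothesis $P(a)^{1+v}>a$ for $a\in A_n$ matches Lemma \ref{lem:disjointLac}, so the family $\{{\rm L}_{ac} : a\in A_n,\,c\in C_a^v\}$ is pairwise disjoint, and each ${\rm L}_{ac}$ sits inside ${\rm L}_n$ (every relevant prime is $\ge q=P(n)$). Summing natural densities yields
\begin{equation*}
\sum_{a\in A_n}\sum_{c\in C_a^v}{\rm d}({\rm L}_{ac}) \ \le \ {\rm d}({\rm L}_n).
\end{equation*}

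Next I reduce ${\rm d}({\rm L}_{ac})$ to a scalar multiple of ${\rm d}({\rm L}_a)$. From $P(a)^{1+v}>a$ one gets $a^*<P(a)^v$, hence $P(a^*)^{1/\sqrt{v}}<P(a)^{\sqrt{v}}<P(a)$ (using $v<1$); thus every prime of $c\in C_a^v$ is below $P(a)$, so $P(ac)=P(a)$ and ${\rm d}({\rm L}_{ac})={\rm d}({\rm L}_a)/c$. The inner sum $\sum_{c\in C_a^v}1/c$ is an Euler product over primes in $[P(a^*),P(a^*)^{1/\sqrt{v}})$. Writing it as a ratio of full Euler products and invoking the definition \eqref{eq:muxdef} of $\mu_x$ in numerator and denominator (the $e^\gamma$ cancels and the ratio of logarithms contributes $1/\sqrt{v}$) gives
\[
\sum_{c\in C_a^v}\frac{1}{c} \ = \ \frac{1}{\sqrt{v}}\cdot\frac{\mu_{P(a^*)}}{\mu_{P(a^*)^{1/\sqrt{v}}}}.
\]

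The main remaining step is to lower-bound this ratio by $1/r_q$. A key observation is $P(a^*)\ge q$: writing $a=bn$ with $b>1$ and $p(b)\ge q$, either $b$ has $\ge 2$ prime factors (with multiplicity), in which case $P(a^*)\ge p(b)\ge q$, or $b$ is a single prime and then $P(a^*)=P(n)=q$. For $q\ge 3$, the monotonic bounds of Lemma \ref{lem:monotonic} give $\mu_{P(a^*)}\ge m_{P(a^*)}\ge m_q$ and $\mu_{P(a^*)^{1/\sqrt{v}}}\le M_{P(a^*)^{1/\sqrt{v}}}\le M_q$ (using $v<1$ so $P(a^*)^{1/\sqrt{v}}\ge q$), so the ratio is $\ge 1/r_q$. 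Since $A_n$ is primitive the sets ${\rm L}_a$ are pairwise disjoint and ${\rm d}({\rm L}_{A_n})=\sum_a{\rm d}({\rm L}_a)$; substituting into the first displayed inequality delivers ${\rm d}({\rm L}_{A_n})\le \sqrt{v}\,r_q\,{\rm d}({\rm L}_n)$ when $q\ge 3$.

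The technical obstacle is the edge case $q=2$, where $P(a^*)$ may equal $2$ (for instance when $a$ has a single odd prime factor of multiplicity one), and $M_2=\mu_2\approx 1.235$ is too large for the crude bound above. If $P(a^*)\ge 3$ the previous argument applies verbatim with $3$ in place of $q$, giving ratio $\ge 1/r_3$. If $P(a^*)=2$, then exactly because $\mu_2=M_2$ from Lemma \ref{lem:monotonic}, one has $\mu_2/\mu_{2^{1/\sqrt{v}}}\ge \mu_2/M_2=1\ge 1/r_3$. Either way the ratio is at least $1/r_3$, which is precisely the convention $r_2:=r_3$ and completes the proof.
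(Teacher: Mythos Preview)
Your proof is correct and follows essentially the same route as the paper: apply Lemma~\ref{lem:disjointLac} to obtain disjointness of the ${\rm L}_{ac}$ inside ${\rm L}_n$, rewrite ${\rm d}({\rm L}_{ac})={\rm d}({\rm L}_a)/c$, evaluate $\sum_{c\in C_a^v}1/c$ via Mertens in the form~\eqref{eq:muxdef}, and bound the resulting ratio $\mu_{P(a^*)}/\mu_{P(a^*)^{1/\sqrt v}}$ using the monotonic envelopes $m_q,M_q$. Your explicit verification that $P(a^*)\ge q$ (by splitting $a=bn$ according to whether $b$ is prime or not) is a point the paper leaves implicit, and it is needed both for the monotonicity step and for the containment ${\rm L}_{ac}\subset {\rm L}_n$ that you assert in your first paragraph; you may wish to move that verification earlier so the containment is justified where it is first used.
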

\begin{proof}
Without loss assume $A = A_n$. Then $ac\in {\rm L}_n$ for all $a\in A$, $c\in C_a^v$ (recall $p(ac)=p(a)$), and so ${\rm L}_{ac} \subset {\rm L}_n$. Note the condition $P(a)^{1+v} > a$ is equivalent to $P(a)^v > a^*$, and $v<1$ implies $P(a)\nmid a^*$. By Lemma \ref{lem:disjointLac}, we have the following (finite) disjoint union,
\begin{align}\label{eq:disjointunion}
{\rm L}_n \ \supset \ \bigcup_{a\in A}\bigcup_{c\in C_a^v}{\rm L}_{ac}.
\end{align}
Thus taking the density of \eqref{eq:disjointunion}, we obtain
\begin{align}\label{eq:denprop}
{\rm d}({\rm L}_n) & \ge {\rm d}\bigg(\bigcup_{a\in A} \bigcup_{c\in C_a^v}{\rm L}_{ac}\bigg) 
= \sum_{a\in A}\sum_{c\in C_a^v}{\rm d}({\rm L}_{ac}) = \sum_{a\in A}{\rm d}({\rm L}_a)\,\sum_{c\in C_a}\frac{1}{c},
\end{align}
noting $P(a)>P(c)$ for $1<c\in C_a^v$, so ${\rm L}_{ac} = \{bac: p(b) \ge P(a)\} = c\cdot{\rm L}_{a}$. Then by definitions of $C_a^v$ and $\mu_q$ in \eqref{eq:defCa} and \eqref{eq:muxdef},
\begin{align}\label{eq:sumCa}
\sum_{c\in C_a^v}\frac{1}{c}\  & = \prod_{p\in [P(a^*), P(a^*)^{1/\sqrt{v}})}\Big(1-\frac{1}{p}\Big)^{-1} = \prod_{p<P(a^*)^{1/\sqrt{v}}}\Big(1-\frac{1}{p}\Big)^{-1}\prod_{p<P(a^*)}\Big(1-\frac{1}{p}\Big) \nonumber\\
&= \frac{\log P(a^*)^{1/\sqrt{v}}}{\mu_{P(a^*)^{1/\sqrt{v}}}} \frac{\mu_{P(a^*)}}{\log P(a^*)} = \frac{\mu_{P(a^*)}}{\mu_{P(a^*)^{1/\sqrt{v}}}}\frac{1}{\sqrt{v}}.
\end{align}

When $q\ge3$, we use $\mu_{P(a^*)}/\mu_{P(a^*)^{1/\sqrt{v}}} \ge m_q/M_q=1/r_q$, which follows by monotonicity of $m_q,M_q$ in Lemma \ref{lem:monotonic}, and that $P(a^*),q\in\mathcal P$. Hence plugging \eqref{eq:sumCa} back into \eqref{eq:denprop}, 
\begin{align*}
{\rm d}({\rm L}_n) & \ge \frac{1}{\sqrt{v}\,r_q}\sum_{a\in A}{\rm d}({\rm L}_a) = \frac{1}{\sqrt{v}\,r_q}\,{\rm d}({\rm L}_A)
\end{align*}
as desired.

The result similarly holds when $q=2$: if $P(a^*)\ge 3$ then $\mu_{P(a^*)}/\mu_{P(a^*)^{1/\sqrt{v}}} \ge m_3/M_3=1/r_3$ as before. And if $P(a^*)= 2$ then $\mu_{2}/\mu_{2^{1/\sqrt{v}}} \ge 1$ also suffices.
\end{proof}


\section{Deduction of Theorems \ref{thm:EPS}, \ref{thm:Estrongodd}, \ref{thm:ESS}}

We now apply our analysis of the density of L-multiples to our original sum of interest $f(A) = \sum_{a\in A}\frac{1}{a\log a}$. First we need a simple lemma on bounding certain monotonic sequences.

\begin{lemma}\label{lem:mass}
For $k\ge1$, let $c_0\ge c_1\ge\cdots\ge c_k\ge 0$ and $0=D_0\le D_1\le \cdots \le D_k$. If $d_1,\ldots,d_k\ge0$ satisfy $\sum_{j\le i}d_j \le D_i$ for all $i\le k$, then we have
\begin{align*}
\sum_{i\le k}c_i d_i \ \le \ \sum_{i\le k}c_i(D_i - D_{i-1}).
\end{align*}
\end{lemma}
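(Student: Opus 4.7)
\medskip

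\textbf{Proof plan.} The statement is a discrete rearrangement inequality, and the natural tool is Abel summation (summation by parts). Write $S_i := \sum_{j\le i}d_j$ with $S_0 := 0$, so that $d_i = S_i - S_{i-1}$ for $1\le i\le k$, and the hypothesis reads $S_i \le D_i$ for every $i$. My plan is to rewrite both sides of the desired inequality in the common form
\begin{align*}
c_k T_k + \sum_{i=1}^{k-1}(c_i - c_{i+1})\,T_i
\end{align*}
by Abel summation, with $T_i = S_i$ on the left and $T_i = D_i$ on the right.

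Concretely, I would first note the identity
\begin{align*}
\sum_{i=1}^{k} c_i(T_i - T_{i-1}) \ = \ c_k T_k \;-\; c_1 T_0 \;+\; \sum_{i=1}^{k-1}(c_i - c_{i+1})\,T_i,
\end{align*}
valid for any real sequences $\{c_i\}$ and $\{T_i\}$ with $T_0 = 0$. Applying this with $T_i = S_i$ yields $\sum_{i\le k} c_i d_i = c_k S_k + \sum_{i=1}^{k-1}(c_i - c_{i+1})S_i$, and applying it with $T_i = D_i$ (using $D_0 = 0$) yields $\sum_{i\le k} c_i(D_i - D_{i-1}) = c_k D_k + \sum_{i=1}^{k-1}(c_i - c_{i+1})D_i$.

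The conclusion then follows by comparing the two representations termwise: since $c_k \ge 0$ and $c_i - c_{i+1}\ge 0$ by the monotonicity hypothesis on $\{c_i\}$, and $S_i \le D_i$ for each $i$ by hypothesis, each summand on the left is dominated by the corresponding summand on the right. There is no real obstacle here; the only thing to be careful about is the index range (so that $D_0 = 0$ is used to eliminate the boundary term $c_1 T_0$) and the fact that $c_0$ plays no role in the sums but is merely part of the hypothesis ensuring the full monotone chain. An alternative route would be a short induction on $k$, shifting any excess mass from $d_k$ back to earlier indices and using $c_{k-1}\ge c_k$, but Abel summation gives the cleanest one-line argument.
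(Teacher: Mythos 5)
Your proposal is correct and is essentially the same argument as the paper's: the paper also performs Abel summation to write $\sum_{i\le k}c_id_i=\sum_{i\le k-1}(c_i-c_{i+1})\sum_{j\le i}d_j+c_k\sum_{i\le k}d_i$, bounds each partial sum $\sum_{j\le i}d_j$ by $D_i$ using $c_i-c_{i+1}\ge 0$ and $c_k\ge 0$, and then re-sums to obtain $\sum_{i\le k}c_i(D_i-D_{i-1})$. Your handling of the boundary terms via $S_0=D_0=0$ matches the paper's implicit use of the same fact, so there is nothing to add.
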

\begin{proof}
By rearranging sums,
\begin{align*}
\sum_{i\le k}c_id_i=\sum_{i\le k}c_i\Big(\sum_{j\le i}d_j-\sum_{j\le i-1}d_j\Big) =\sum_{i\le k-1}(c_i-c_{i+1})\sum_{j\le i}d_j \ + \ c_k\sum_{i\le k}d_i.
\end{align*}
Since $c_i \ge c_{i+1}$ and $\sum_{j\le i} d_j \le D_i$, we conclude
\begin{align*}
\sum_{i\le k}c_id_i \le \sum_{i\le k-1}(c_i-c_{i+1})D_i \ + \ c_kD_k = \sum_{i\le k}c_i (D_i-D_{i-1}).
\end{align*}
\end{proof}

To motivate the remainder of the proof, we offer a probabilistic interpretation of Proposition \ref{prop:maindensity}: for $v\ge0$, consider $D(v):=\sup_A{\rm d}({\rm L}_{A_n})/{\rm d}({\rm L}_n)$, ranging over primitive sets $A$ such that $P(a)^{1+v} > a$ for all $a\in A$. Note $D(v)$ may be viewed as a `cumulative distribution function', since $D(0)=0$ and $D(v)\to 1$ as $v\to\infty$. Now Proposition \ref{prop:maindensity} essentially bounds $D(v)$ by $\sqrt{v}$. Using the corresponding bound $1/2\sqrt{v}$ for the `probability density function', we establish quantitative bounds below.

\begin{proposition}\label{prop:oddstrongdLn}
For any primitive set $A$, and any integer $n\notin A$ with $q=P(n)\ge3$,
\begin{align*}
f(A_n) \ \le \ \frac{\pi}{4}\frac{M_q}{m_q^2}\;e^\gamma {\rm d}({\rm L}_n).
\end{align*}
\end{proposition}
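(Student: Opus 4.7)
The plan is to refine Lemma \ref{lem:fMertPnu} by partitioning $A_n$ according to the parameter $v(a):=\log a/\log P(a)-1$, so that $P(a)^{1+v(a)}=a$. As in the proof of Lemma \ref{lem:fMertPnu}, the hypothesis $n\notin A$ forces every $a\in A_n$ to be composite, hence $v(a)>0$. Combining the Mertens-type identity ${\rm d}({\rm L}_a)=\mu_{P(a)}/(ae^\gamma \log P(a))$ with the lower bound $\mu_{P(a)}\ge m_{P(a)}\ge m_q$ from Lemma \ref{lem:monotonic} (using $P(a)\ge P(n)=q$ for $a\in{\rm L}_n$), I would first pass to a weighted density sum:
\[
f(A_n) \ = \ \sum_{a\in A_n}\frac{1}{a(1+v(a))\log P(a)} \ \le \ \frac{e^\gamma}{m_q}\sum_{a\in A_n}\frac{{\rm d}({\rm L}_a)}{1+v(a)}.
\]

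Next, I would introduce the nondecreasing step function $D(v):=\sum_{a\in A_n,\,v(a)<v}{\rm d}({\rm L}_a)$, which equals ${\rm d}({\rm L}_{B(v)})$ for the primitive subset $B(v):=\{a\in A_n:v(a)<v\}\subset A$ by Corollary \ref{cor:Lsplit}. Each $a\in B(v)$ satisfies $P(a)^{1+v}>a$, so Proposition \ref{prop:maindensity} yields $D(v)\le r_q\sqrt v\,{\rm d}({\rm L}_n)$ on $(0,1)$; while for $v\ge 1$ the trivial bound $D(v)\le{\rm d}({\rm L}_{A_n})\le{\rm d}({\rm L}_n)\le r_q{\rm d}({\rm L}_n)$ suffices (since $r_q\ge 1$). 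Both estimates are captured by the continuous envelope $D^*(v):=r_q{\rm d}({\rm L}_n)\min(\sqrt v,1)$, which vanishes at $v=0$.

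To convert the weighted sum into a Riemann--Stieltjes integral against $D^*$, I would discretize: partition $[0,V]$ as $0=v_0<v_1<\cdots<v_k=V$ of small mesh, set $d_i=\sum_{a\in A_n,\,v(a)\in[v_{i-1},v_i)}{\rm d}({\rm L}_a)$, and $c_i=1/(1+v_{i-1})$. Since $1/(1+v(a))\le c_i$ whenever $v(a)\in[v_{i-1},v_i)$, and the hypotheses of Lemma \ref{lem:mass} hold with $D_i=D^*(v_i)$ (namely $c_i$ nonincreasing, $D^*(v_0)=0\le D^*(v_1)\le\cdots$, and $\sum_{j\le i}d_j=D(v_i)\le D^*(v_i)$), we obtain
\[
\sum_{a\in A_n,\,v(a)<V}\frac{{\rm d}({\rm L}_a)}{1+v(a)} \ \le \ \sum_{i=1}^k c_i d_i \ \le \ \sum_{i=1}^k c_i\bigl(D^*(v_i)-D^*(v_{i-1})\bigr).
\]
Letting the mesh shrink and $V\to\infty$ (harmless since $D^*$ is constant on $[1,\infty)$), the right-hand side converges to $\int_0^1 (1+v)^{-1}dD^*(v)$. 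Using $dD^*(v)/dv=r_q{\rm d}({\rm L}_n)/(2\sqrt v)$ on $(0,1)$ and the substitution $u=\sqrt v$,
\[
\int_0^1\frac{dD^*(v)}{1+v} \ = \ r_q{\rm d}({\rm L}_n)\int_0^1\frac{dv}{2\sqrt v(1+v)} \ = \ r_q{\rm d}({\rm L}_n)\int_0^1\frac{du}{1+u^2} \ = \ \frac{\pi}{4}\,r_q\,{\rm d}({\rm L}_n).
\]
Combining with the first display gives $f(A_n)\le (e^\gamma/m_q)(\pi/4)r_q{\rm d}({\rm L}_n)=(\pi/4)(M_q/m_q^2)e^\gamma{\rm d}({\rm L}_n)$, as desired.

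The main obstacle I anticipate is handling the case when $A_n$ is infinite, since Proposition \ref{prop:maindensity} is stated for finite primitive sets; this is routinely resolved by first proving the bound for the truncation $A_n\cap[1,X]$ and sending $X\to\infty$, using $f(A_n)=\lim_X f(A_n\cap[1,X])$ as in Lemmas \ref{lem:LMertfA} and \ref{lem:fMertPnu}. The only other delicate point is the passage from the finite Abel summation to the Stieltjes integral, but the monotonicity built into Lemma \ref{lem:mass} together with the continuity of $D^*$ and the weight $(1+v)^{-1}$ ensures the discretization error is absorbed in the limit.
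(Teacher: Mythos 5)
Your proposal is correct and follows essentially the same route as the paper: a pointwise Mertens-type bound reducing $f(a)$ to $\frac{e^\gamma}{m_q}\frac{{\rm d}({\rm L}_a)}{1+v}$, the key density estimate of Proposition \ref{prop:maindensity} applied to the sublevel sets $\{a: P(a)^{1+v}>a\}$, Abel summation via Lemma \ref{lem:mass}, and the limiting integral $\int_0^1\frac{\dd v}{2\sqrt v(1+v)}=\frac{\pi}{4}$. The only (harmless) cosmetic difference is that you work with the exact exponent $v(a)$ and a Stieltjes-integral formulation, while the paper partitions into bands $P(a)^{1+v_i}\le a<P(a)^{1+v_{i+1}}$ and takes a Riemann-sum limit.
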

\begin{proof}
Without loss, we may assume $A=A_n$ is finite, since $f(A) = \lim_{x\to\infty} f(A\cap[1,x])$. Also $n\notin A$ implies all elements of $A$ are composite.

Take $k\ge1$ and any sequence $0=v_0<v_1<\cdots<v_k=1$ and partition the set $A = \bigcup_{0\le i\le k} A_{(i)}$, where $A_{(k)}=\{a\in A: P(a)^2 \le a\}$ and for $0\le i\le k$,
\begin{align*}
A_{(i)} = \{a\in A :  P(a)^{1+v_{i}} \le a < P(a)^{1+v_{i+1}}\}.
\end{align*}
Then applying Lemma \ref{lem:fMertPnu} to each $A_{(i)}$,
\begin{align}\label{eq:fAdLAweight}
f(A) & = \sum_{0\le i\le k} f(A_{(i)}) \ \le \ \frac{e^\gamma}{m_q} \sum_{0\le i\le k} \frac{{\rm d}({\rm L}_{A_{(i)}})}{1+v_i}.
\end{align}
Note since $A$ is primitive, $\{{\rm L}_{A_{(i)}}\}_{i\le k}$ are pairwise disjoint. Also for each $j< k$, the first $j$ components are $\bigcup_{0\le i\le j}A_{(i)}=\{a\in A:a<P(a)^{1+v_{j+1}}\}=:A^{(j)}$, so by Proposition \ref{prop:maindensity} they have density
\begin{align*}
\sum_{0\le i\le j}{\rm d}({\rm L}_{A_{(i)}}) = {\rm d}({\rm L}_{A^{(j)}})\le \sqrt{v_{j+1}} \,r_q\,{\rm d}({\rm L}_n).
\end{align*}
Also for $j=k$ we have $\sum_{0\le i\le k}{\rm d}({\rm L}_{A_{(i)}}) = {\rm d}({\rm L}_{A}) \le {\rm d}({\rm L}_n)$, which is trivially less than $r_q{\rm d}({\rm L}_n)$. Let $c_i = \frac{1}{1+v_{i}}$, $d_i = {\rm d}({\rm L}_{A_{(i)}})$, $D_i = \sqrt{v_{i+1}} \,r_q\,{\rm d}({\rm L}_n)$ (here we let $v_{k+1}=v_k$ so that $D_k - D_{k-1}=0$). Thus by Lemma \ref{lem:mass} we have
\begin{align*}
\sum_{0\le i\le k} \frac{{\rm d}({\rm L}_{A_{(i)}})}{1+v_{i}} = \sum_{0\le i\le k}c_id_i \ \le \ \sum_{0\le i\le k}c_i(D_i - D_{i-1})
=r_q\,{\rm d}({\rm L}_n)\sum_{0\le i\le k} \frac{\sqrt{v_{i+1}}-\sqrt{v_{i}}}{1+v_{i}}.
\end{align*}
Hence the weighted sum in \eqref{eq:fAdLAweight} is bounded by
\begin{align}\label{eq:Riemannsum}
f(A) & \le \frac{r_q}{m_q}\,e^\gamma\,{\rm d}({\rm L}_n)\sum_{1\le i\le k} \frac{\sqrt{v_{i}}-\sqrt{v_{i-1}}}{1+v_{i-1}}.
\end{align}
As \eqref{eq:Riemannsum} holds for any partition $0=v_0<v_1<\cdots<v_k=1$, we may set $v_i=\frac{i}{k}$ and obtain the corresponding integral,
\begin{align*}
\lim_{k\to\infty}\sum_{1\le i\le k} \frac{\sqrt{v_i}-\sqrt{v_{i-1}}}{1+v_{i-1}} = \lim_{k\to\infty}\sum_{1\le i\le k}\int_{v_{i-1}}^{v_i} \frac{\dd}{\dd v}\Big[\sqrt{v}\Big]\frac{\dd{v}}{1+v_{i-1}} 
= \int_0^1 \frac{\dd{v}}{2\sqrt{v}(1+v)} = \frac{\pi}{4}.
\end{align*}
Hence we conclude $f(A) \le \frac{\pi}{4}\,\frac{r_q}{m_q}\,e^\gamma\,{\rm d}({\rm L}_n)$ as desired.
\end{proof}

We illustrate the value of these bounds by deducing Theorem \ref{thm:Estrongodd} in quantitative form.
\begin{corollary}
Let $A$ be a primitive set, and take an odd prime $p$. If $p\notin A$ then we have $f(A_p) <.901\,f(p)$, and moreover $f(A_p) \le (\frac{\pi}{4}+o(1))f(p)$ as $p\to\infty$. In addition, if $p>23$ and $2p\notin A$ then $f(A_{2p}) < f(2p)$.
\end{corollary}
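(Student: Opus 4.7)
My plan is to apply Proposition \ref{prop:oddstrongdLn} in each case, converting ${\rm d}({\rm L}_n)$ into the harmonic value $f(n)$ via \eqref{eq:fqdLq}, and then verify the resulting explicit constants using the tabulated and monotonic bounds of Lemma \ref{lem:monotonic}.

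For the first two claims, take $n = p$, so $q = P(p) = p \ge 3$ and Proposition \ref{prop:oddstrongdLn} yields $f(A_p) \le (\pi/4)(M_p/m_p^2)\,e^\gamma {\rm d}({\rm L}_p)$. By \eqref{eq:fqdLq} we have $e^\gamma {\rm d}({\rm L}_p) = \mu_p f(p)$, so this simplifies to
\[
f(A_p) \ \le \ \frac{\pi}{4}\,\frac{M_p\,\mu_p}{m_p^2}\,f(p).
\]
The asymptotic claim $f(A_p) \le (\pi/4 + o(1)) f(p)$ is then immediate, since $\mu_p, m_p, M_p \to 1$ as $p \to \infty$ by Mertens. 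For the explicit bound $.901\,f(p)$, I would verify $(\pi/4)\,M_p \mu_p/m_p^2 < .901$ in two regimes: small odd primes are handled directly from the table in Lemma \ref{lem:monotonic}, with the binding case $p = 3$ giving $(\pi/4)(1.00109)(0.9784)/(0.9242)^2 \approx .9007$; for large $p$, the uniform bounds $M_p \le 1 + 1/(2(\log p)^2)$ and $m_p \ge 1 - 1/(2(\log p)^2)$ produce values well below $.901$.

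For the final assertion take $n = 2p$, so $q = P(2p) = p$. Then
\[
{\rm d}({\rm L}_{2p}) \ = \ \frac{1}{2p}\prod_{r<p}\Big(1-\tfrac{1}{r}\Big) \ = \ \frac{\mu_p}{2p\,e^\gamma\log p},
\]
so $e^\gamma {\rm d}({\rm L}_{2p})/f(2p) = \mu_p\log(2p)/\log p = \mu_p(1 + \log 2/\log p)$, and Proposition \ref{prop:oddstrongdLn} now reads
\[
f(A_{2p}) \ \le \ \frac{\pi}{4}\,\frac{M_p\,\mu_p}{m_p^2}\Big(1 + \frac{\log 2}{\log p}\Big) f(2p).
\]
I would then check that the coefficient on the right is $<1$ for every prime $p > 23$: at $p = 29$ the tabulated values $\mu_{29} = .9811$, $m_{29} = \mu_{31} = .9660$, $M_{29} \le 1.001$ give approximately $.997$; subsequent small primes up to $p \le 300$ are handled from the table in the same way; and for $p > 300$ the monotonic estimates of Lemma \ref{lem:monotonic}, together with $\log 2/\log p < \log 2/\log 300$, ensure the coefficient stays below $1$ (in fact tending to $\pi/4$).

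The main obstacle is purely numerical tightness. The first bound is essentially saturated at $p = 3$ (with margin $\sim 10^{-4}$ below $.901$), and the $2p$ bound at $p = 29$ (with margin $\sim 3\cdot10^{-3}$ below $1$); the restriction $p > 23$ is forced because $p = 23$ gives coefficient $\approx 1.005 > 1$. No new ingredient beyond Proposition \ref{prop:oddstrongdLn} is required---only careful bookkeeping with the estimates of Lemma \ref{lem:monotonic}.
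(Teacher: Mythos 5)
Your proposal is correct and follows essentially the same route as the paper: apply Proposition \ref{prop:oddstrongdLn} with $n=p$ and $n=2p$, convert $e^\gamma{\rm d}({\rm L}_n)$ to $f(p)$ via \eqref{eq:fqdLq} (the paper packages the resulting coefficient as $b_q=\frac{\pi}{4}M_q\mu_q/m_q^2$), and verify the numerics with the table and monotonic bounds of Lemma \ref{lem:monotonic}, with the same binding cases $p=3$ and $p=29$. The only minor point is that the lower bound $m_p\ge 1-1/(2(\log p)^2)$ is available only for $p>300$ (with $m_p\ge\mu_{19}$ in between), but this is exactly the bookkeeping you flag and it does not affect the argument.
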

\begin{proof}
For an odd prime $q$ define $b_q:=\frac{\pi}{4}\frac{M_q}{m_q^2}\mu_q$. Then Proposition \ref{prop:oddstrongdLn} shows that if $n\notin A$ we have
\begin{align*}
f(A_n) \le \frac{\pi}{4}\frac{M_q}{m_q^2}e^\gamma {\rm d}({\rm L}_n) = \frac{q}{n}\,b_q f(q)
\end{align*}
with $q=P(n)\ge3$, recalling ${\rm d}({\rm L}_n) = \frac{q}{n}{\rm d}({\rm L}_q)$ and \eqref{eq:fqdLq}. In particular for $n=q,2q$ we have $f(A_q)\le b_qf(q)$ and $f(A_{2q})\le \frac{1}{2}b_qf(q)$. Note $\mu_q,m_q,M_q\sim 1$ implies $b_q \sim\frac{\pi}{4}$ as claimed. Also the first few values of $b_q$ are displayed below.
\[\begin{array}{cc|cc}
q & b_q & q & b_q\\
\hline
3  &  0.9006 & 23 &  0.8232\\
5  &  0.8795 & 29 &  0.8266\\
7  &  0.8507 & 31 &  0.8139\\ 
11 &  0.8564 & 37 &  0.8184\\ 
13 &  0.8327 & 41 &  0.8189\\ 
17 &  0.8491 & 43 &  0.8092\\
19 &  0.8305 & 47 &  0.8090
\end{array}\]

Observe for $q>7$, we have
\begin{align*}
f(A_q) \le \frac{\pi}{4} \Big(\frac{M_q}{m_{11}}\Big)^2 f(q) \le \frac{\pi}{4} \Big(\frac{1+1/2\log(2\cdot 10^9)^2}{\mu_{19}}\Big)^2 f(q) < .879 f(q).
\end{align*}
In particular, with the table, we see $f(A_q) < .901 f(q)$ for all $q>2$ as claimed.

Finally, we note $f(A_{2q}) < f(2q)$ whenever $b_q < \frac{\log q}{\log(2q)}$. The result then follows since
\begin{align}\label{eq:logp2p}
b_q = \frac{\pi}{4}\frac{M_q}{m_q^2} \mu_q \ > \ \frac{\log q}{\log(2q)}  \quad\qquad\text{iff} \quad q\le 23.
\end{align}
Indeed, this may be checked directly for $q< 47$. And for $q\ge47$ we observe that $\log q/\log(2q)\ge\log47/\log94\ge.847$ exceeds $b_q \le \frac{\pi}{4}(M_{2\cdot 10^9}/m_{47})^2 \le .834$.
\end{proof}

Importantly $b_q<1$ for all odd $q$, which means every odd prime is Erd\H os strong. However it remains an open question whether $q=2$ is Erd\H os strong. Now if $2\in A$ we immediately deduce $f(A) \le f(\mathcal P)=1.6366\cdots$. Thus to complete the proof of Theorem \ref{thm:EPS}, it suffices to assume $2\notin A$. 

We achieve this in the result below. The argument is somewhat similar in spirit to that of Theorem 1.1 and Lemma 2.4 in \cite{LPprim}.

\begin{theorem}\label{thm:evenPS}
For any primitive set $A$ with $2\notin A$, we have $f(A) < 1.60$.
\end{theorem}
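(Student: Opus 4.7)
The plan is to decompose $f(A)=f(A_2)+\sum_{q\text{ odd prime}}f(A_q)$. By Theorem \ref{thm:Estrongodd}, one has $f(A_q)\le f(q)$ for every odd prime $q$, so the odd contribution is at most $f(\mathcal P)-f(2)\approx 0.9153$. Hence the task reduces to showing $f(A_2)<1.60-(f(\mathcal P)-f(2))\approx 0.685$.

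To bound $f(A_2)$, observe that since $2\notin A$, every element of $A_2$ is composite. I would stratify by the exponent $\alpha=v_2(a)\ge 1$ and, when $a$ is not a pure power of $2$, by the smallest odd prime factor $q$ of $a$:
\[
A_2=\big(A\cap\{2^k:k\ge 2\}\big)\;\cup\;\bigsqcup_{\alpha\ge 1,\;q\text{ odd prime}}A_{2^\alpha q},
\]
where $A_{2^\alpha q}=A\cap L_{2^\alpha q}$ consists of elements $2^\alpha q^\beta d$ with $\beta\ge 1$ and $d$ having all prime factors greater than $q$. The powers-of-$2$ piece contains at most one element by primitivity, contributing at most $f(4)$. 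For each stratum, apply the already-proved Proposition \ref{prop:oddstrongdLn} at $n=2^\alpha q$ (so $P(n)=q\ge 3$) to obtain $f(A_{2^\alpha q})\le \frac{b_q}{2^\alpha}f(q)$ when $2^\alpha q\notin A$, and $f(A_{2^\alpha q})=f(2^\alpha q)\le \frac{1}{2^\alpha}f(q)$ when $2^\alpha q\in A$ (at most one such $\alpha$ by primitivity). Summing over $\alpha\ge 1$ gives $f(A_{2,q}):=\sum_{\alpha}f(A_{2^\alpha q})\le b_qf(q)$ in every case, since $b_q\ge 1/2$ forces $f(2^\alpha q)\le b_qf(q)$ too.

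This immediately yields $f(A_2)\le f(4)+\sum_{q\text{ odd}}b_qf(q)$, which evaluates to roughly $0.97$ and is not sharp enough. The main obstacle is closing the remaining gap of about $0.28$. The strategy is to exploit primitivity jointly across strata: if $q\in A$ then $A_{2,q}$ is empty, whereas if $q\notin A$ then $f(A_q)\le b_qf(q)$ improves on the crude $f(A_q)\le f(q)$ used in the odd sum. Trading these off via a monotone rearrangement in the spirit of Lemma \ref{lem:mass}, and applying a refinement of Proposition \ref{prop:maindensity} to the union $\bigcup_{\alpha\ge 0}L_{2^\alpha q}$ (of total density $2d(L_q)$) so as to harvest the $\pi/4$ savings across the whole chain at once, combined with explicit numerical bounds for small $q$ (extending the $b_q$ table from the Corollary) and asymptotic bounds for $q\ge q_0$, one obtains $f(A_2)<0.685$ and hence $f(A)<1.60$. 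The hardest part will be the small-prime contributions, especially $q=3,5,7$, where the numerical constants are least favorable and the explicit table-based bookkeeping is required to certify the inequality.
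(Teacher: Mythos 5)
Your stratification of $A_2$ by the power of $2$ and the smallest odd prime factor, together with the bound $f(A_{2^\alpha q})\le 2^{-\alpha}b_qf(q)$ from Proposition \ref{prop:oddstrongdLn}, is exactly the paper's starting point. But the overall architecture does not close, and the final paragraph does not repair it. The intermediate target $f(A_2)<0.685$ is both out of reach of this method and unnecessary: summing $b_qf(q)$ over odd $q$ gives $C_1+C_2\approx 0.797$ (the paper's remark after the theorem records precisely $f(A_2)<0.80$), and the ``trade-off'' you describe cannot lower a bound on $f(A_2)$ \emph{alone}, since it is a trade-off between $f(A_q)$ and the $q$-stratum of $A_2$ and therefore only helps when you bound their \emph{sum}. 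That is what the paper does: for each odd $q$, either $q\in A$ (contributing $f(q)$, with the $q$-stratum of $A_2$ empty by primitivity) or $q\notin A$ (contributing at most $b_qf(q)$ from $A_q$ plus at most $b_qf(q)$ from the strata $\alpha\ge1$); since $2b_q>1$ the worst case is $2b_qf(q)$, whence $f(A)\le f(2^K)+2(C_1+C_2)$ with $2(C_1+C_2)\le 1.595$. No refinement of Proposition \ref{prop:maindensity} to the union $\bigcup_\alpha {\rm L}_{2^\alpha q}$ is needed (applying it stratum by stratum and summing the geometric series gives the same thing), and Lemma \ref{lem:mass} plays no role here.

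Second, even the corrected trade-off is not enough as you have set it up: writing $2^K$ for the unique power of $2$ in $A$, the term $f(2^K)$ added to $1.595$ exceeds $1.60$ for every $K\le 5$ (e.g.\ $f(4)\approx 0.18$), so treating the pure power of $2$ as a separate additive $f(4)$ and discarding it breaks the bound. The paper absorbs this term by observing that $2^K\in A$ forces $A^k=\emptyset$ for all $k\ge K$, so for $q\le 23$ the strata contribute only $(1-2^{1-K})b_qf(q)$ rather than $b_qf(q)$, and $f(2^K)\le 2^{-K}/\log 4<2^{1-K}C_1$ makes the total monotone in $K$. You would also need the explicit comparison $b_q>\log q/\log(2q)$ iff $q\le 23$ (the paper's \eqref{eq:logp2p}) to decide, prime by prime, whether the subcase $2^Jq\in A$ or the subcase $2^kq\notin A$ for all $k$ is the worse one. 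These two quantitative ingredients are the substance of the proof and are absent from the proposal.
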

\begin{proof}
As $2\notin A$, denote by $K\ge 2$ the exponent for which $2^K \in A$. Note $K$ is unique by primitivity (Also if $2^k\notin A$ for all $k$ let $K=\infty$, in which case let $f(2^K)=0$). Partition $A$ into sets $A^0=\{a\in A : 2\nmid a\}$ and $A^k = \{a\in A: 2^k\| a\}$ for $k\ge1$, and let $B^k = \{a/2^{k}: a\in A^k\}$. We have
\begin{align}\label{eq:fAsplit2kp}
f(A) & = f(2^K)+\sum_{p\in A}f(p) +\sum_{p\notin A}f(A_{p}) \nonumber\\
&\le f(2^K) + \sum_{\substack{p>2\\p\in A}} f(p) + \sum_{\substack{p>2\\p\notin A}} b_pf(p) +  \sum_{\substack{p>2\\p\notin A}}\sum_{k=1}^{K-1} f((A^k)_{2^k p}),
\end{align}
since $f((A^0)_p) \le b_pf(p)$ if $p\notin A$ by Proposition \ref{prop:oddstrongdLn}. More generally, if $2^kp\notin A$ then
\begin{align*}
f((A^k)_{2^k p}) \le 2^{-k}f((B^k)_p) \le 2^{-k} b_p f(p).
\end{align*}
By comparison if $2^kp\in A$ then $f((A^k)_{2^k p}) = f(2^kp) \le 2^{-k}f(p)\frac{\log p}{\log(2p)}$.

Observe that either $2^kp\notin A$ for all $k\ge1$, or $2^Jp\in A$ for a (unique) $J=J_p\in [1,K)$, in which case $(A^k)_{2^k p}=\emptyset$ for all $k>J$ by primitivity. Thus by \eqref{eq:logp2p}, it suffices to assume $2^kp\notin A$ for all $k\ge1$ when $p\le 23$, and $2^Jp\in A$ for some $J\in [1,K)$ when $p> 23$, so
\begin{align*}
\sum_{\substack{p>2\\p\notin A}}\sum_{k=1}^{K-1} f((A^k)_{2^k p}) & \le  (1-2^{1-K})\sum_{\substack{2<p\le 23\\p\notin A}} b_pf(p)  + \sum_{\substack{p> 23\\p\notin A}}f(p)\Big((1-2^{1-J})b_p + 2^{-J}\frac{\log p}{\log(2p)} \Big)\\
& \le  (1-2^{1-K})\sum_{\substack{2<p\le 23\\p\notin A}} b_pf(p) + \sum_{\substack{p> 23\\p\notin A}}b_p f(p),
\end{align*}
since $2b_p > 1 > \frac{\log p}{\log(2p)}$ for all $p>2$. Moreover $(2-2^{1-K})b_p \ge (2-1/2)\frac{\pi}{4} > 1.1$, so \eqref{eq:fAsplit2kp} becomes
\begin{align}\label{eq:fAC12}
f(A)
& \le f(2^K) + \sum_{\substack{p>2\\p\in A}} f(p) + (2-2^{1-K})\sum_{\substack{2<p\le 23\\p\notin A}} b_pf(p) + 2\sum_{\substack{p> 23\\p\notin A}}b_p f(p) \nonumber\\
& \le f(2^K) + (2-2^{1-K})\sum_{2<p\le 23} b_pf(p) + 2\sum_{p> 23}b_p f(p)\nonumber\\
& =: \ f(2^K) + (2-2^{1-K})C_1 + 2C_2.
\end{align}

Now we compute the constants $C_1,C_2$. First, let $M=M_{2\cdot10^9}=1.001\cdots$. Recalling $\mu_p f(p) = e^\gamma {\rm d}({\rm L}_p)$,
\begin{align}\label{eq:C2}
C_2 :=\sum_{p>23}b_p f(p) = \frac{\pi}{4} e^\gamma \sum_{p> 23}\frac{M_p}{m_p^2}{\rm d}({\rm L}_p) \le \frac{\pi}{4}\frac{Me^\gamma}{\mu_{23}^2} \prod_{p\le 23}\big(1-\tfrac{1}{p}\big) = 0.251135\cdots,
\end{align}
since $\sum_{p> q}{\rm d}({\rm L}_{p}) = \prod_{p\le q}(1-\frac{1}{p})$. Similarly we have
\begin{align}\label{eq:C1}
C_1 :=\sum_{2<p\le 23}b_p f(p)= \sum_{2< p\le 23} \frac{\pi}{4}\frac{M}{m_p^2} e^\gamma{\rm d}({\rm L}_p) = \frac{\pi}{4}\,M e^\gamma\cdot 0.39012\cdots = 0.5463\cdots
\end{align}
Here we computed
\begin{align*}
\sum_{2< p\le 23} \frac{1}{m_p^2} {\rm d}({\rm L}_{p}) & = \frac{1}{\mu_7^2}\sum_{2< p\le 7}{\rm d}({\rm L}_{p}) + \frac{1}{\mu_{19}^2}\sum_{7< p\le 19}{\rm d}({\rm L}_{p}) + \frac{1}{\mu_{23}^2} {\rm d}({\rm L}_{23}) 
= 0.390126\cdots,
\end{align*}
using $\sum_{q< p\le q'}{\rm d}({\rm L}_{p}) = \prod_{p\le q}(1-\frac{1}{p})-\prod_{p\le q'}(1-\frac{1}{p})$.

Hence plugging \eqref{eq:C2} and \eqref{eq:C1} back into \eqref{eq:fAC12},
\begin{align}
f(A) & \le f(2^K) + (2-2^{1-K})C_1 + 2C_2 \nonumber\\
& \le 2^{-K}\Big(\frac{1}{\log 4} - 2C_1\Big) + 2(C_1 + C_2)
\le 2(C_1 + C_2) \le 1.595.
\end{align}
Here we used $2C_1 > .722 > 1/\log 4$. This completes the proof.
\end{proof}
\begin{remark}
A similar argument as in Theorem \ref{thm:evenPS} shows $f(A_2) < C_1+C_2 < 0.80$ when $2\notin A$. We leave this to the interested reader. Note this bound improves on $f(A_2) < e^\gamma/2\approx 0.89$ from \cite[Proposition 2.1]{LPprim}, but unfortunately still exceeds $f(2) \approx 0.72$.
\end{remark}

\subsection{Proof of Theorem \ref{thm:ESS}}
Take $\eps>0$. We shall introduce large parameters $y=y_\eps$, $k=k_{\eps,y}$, and $x=x_{\eps,k}$.

By Lemma \ref{lem:LMertfA}, we have $f(A_n) \le e^\gamma{\rm d}({\rm L}_n)$ for any integer $n\notin A$, $n>1$, and when $y=y_\eps\in\R$ is sufficiently large by Proposition \ref{prop:oddstrongdLn} we have the sharper bound
\begin{align}\label{eq:AnPny}
f(A_n) \ \le \ (\frac{\pi}{4}e^\gamma + \eps){\rm d}({\rm L}_n) \qquad\text{provided}\quad P(n) > y.
\end{align}

Next by \cite[Lemma 2]{ErdSark}, for $k=k_{\eps}=k_{\eps,y}\in\N$ sufficiently large,
\begin{align}\label{eq:NkPnley}
\sum_{\substack{n\in \N_{k}\\P(n)\le y}}{\rm d}({\rm L}_n) \ll \frac{1}{k}\sum_{\substack{\Omega(n)=k\\P(n)\le y}}\frac{1}{n}  \ll (\log y)^2\,2^{-k} < \eps.
\end{align}

Finally, since $f(\N_j)<2$ crudely for all $j$ there exists $x=x_{\eps,k}\in\R$ sufficiently large so that $f\big(\bigcup_{j \le k}\N_j\cap [x,\infty)\big)<\eps$.

Now take a primitive set $A\subset [x,\infty)$, and consider the partition $A = A'\cup \bigcup_{n\in\N_k \setminus A}A_n$, where $A'$ consists of elements $a\in A$ with at most $k$ prime factors, and each other element $a\in A$ (with at least $k+1$ prime factors) then lies in $A_n=A\cap {\rm L}_n$, where $n\notin A$ is the product of the smallest $k$ primes of $a$. Hence we conclude
\begin{align*}
f(A) & = f(A') + \sum_{\substack{n\in \N_{k}\setminus A}}f(A_n)\\
& \le f\big(\bigcup_{j \le k}\N_j\cap [x,\infty)\big) + \sum_{\substack{n\in \N_{k}\setminus A\\P(n)\le y}}f(A_n)+ \sum_{\substack{n\in \N_{k}\setminus A\\P(n)> y}}f(A_n)\\
& \le \eps + e^\gamma\sum_{\substack{n\in \N_{k}\\P(n)\le y}}{\rm d}({\rm L}_n)+ (\frac{\pi}{4}e^\gamma + \eps)\sum_{\substack{n\in \N_{k}\\P(n)>y}}{\rm d}({\rm L}_n) \ \le \ \eps + e^\gamma\,\eps+ (\frac{\pi}{4}e^\gamma + \eps),
\end{align*}
by \eqref{eq:AnPny}, \eqref{eq:NkPnley}, and noting $\sum_{n\in \N_{k}, P(n)>y}{\rm d}({\rm L}_n) \le 1$. Hence letting $\eps\to 0$ completes the proof of Theorem \ref{thm:ESS}.

\section{{\rm L}-primitive sets revisited}

\subsection{Upper density} As mentioned in the introduction, one of the striking early results in the study of primitive sets was due to Besicovitch \cite{Besicovitch}, who showed
\begin{align*}
\sup_{A\textnormal{ primitive}}\overline{{\rm d}}(A) \;=\; \frac{1}{2}.
\end{align*} 
This came as quite a surprise at the time, in particular disproving a conjecture of Davenport. We shall extend this phenomenon further to {\rm L}-primitive sets, in Proposition \ref{prop:LBesicovitch}.

To proceed, we recall a result of Erd\H{o}s \cite{ErdosBes}, which bounds the density of the set of multiples of an interval. Also see Hall--Tenenbaum \cite[Theorem 21]{HTdivisor} for quantitatively stronger results. Denote the set of (all) multiples of $A\subset \N$ as ${\rm M}_A = \{na : n\in\N, a\in A\}$.

\begin{proposition}[Erd\H{o}s, 1936] \label{prop:ErdBes}
Let $\varepsilon(x)$ be any function with $\varepsilon(x)\to 0$ as $x\to\infty$. Then the upper density of ${\rm M}_{(x^{1-\varepsilon(x)},x]}$ tends to zero as $x\to\infty$.
\end{proposition}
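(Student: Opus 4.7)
The plan is to show $\overline{\rm d}({\rm M}_{I_x}) \to 0$ where $I_x = (x^{1-\varepsilon(x)}, x]$. The obstacle that makes the result non-trivial is that the direct union bound
\[
\overline{\rm d}({\rm M}_{I_x}) \ \le \ \sum_{d \in I_x} \frac{1}{d} \ \approx \ \log\bigl(x/x^{1-\varepsilon(x)}\bigr) \ = \ \varepsilon(x)\log x
\]
may tend to infinity. So I must exploit the substantial overlap among the sets of multiples ${\rm M}_{\{d\}}$ for $d \in I_x$: intuitively, any integer possessing one divisor in a narrow logarithmic window should typically possess several.

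The cleanest route is to invoke (or re-derive) a bound of the form
\[
H(N, y, z)/N \ \ll \ \bigl(\log(z/y)/\log y\bigr)^{\alpha}
\]
for some absolute $\alpha>0$, uniform in $1 \le y < z \le N$, where $H(N, y, z) := \#\{n \le N : \exists\,d \mid n,\ d \in (y, z]\}$. Applying this with $y = x^{1-\varepsilon(x)}$, $z = x$ makes the ratio $\log(z/y)/\log y = \varepsilon(x)/(1-\varepsilon(x)) \to 0$, delivering the claim. Such bounds were developed by Tenenbaum, Maier--Tenenbaum, and sharpened by Ford; for the qualitative statement at hand, a weaker $o(1)$ rate in place of a polynomial one already suffices, and was essentially obtained by Erd\H{o}s himself.

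Conceptually, the proof of $H(N, y, z) = o(N)$ in the regime $\log(z/y)/\log y \to 0$ proceeds by a Tur\'an--Kubilius-type analysis of the prime-factor structure of a typical $n$. For almost all $n \le N$, the sorted prime factors $p_1 \le \cdots \le p_k$ with $k = \omega(n) \sim \log\log N$ produce partial products whose logarithms lie on a prescribed mesh of typical spacing $\sim \log n/\log\log n$, and the divisors of $n$ cluster near these partial products. A logarithmic window of width $\varepsilon(x)\log x = o(\log y)$ is much narrower than the mesh spacing and so typically misses it entirely, forcing any $n$ with a divisor in $I_x$ to lie in a small exceptional set. The main obstacle is controlling this exceptional set, where prime factors of $n$ cluster or partial products pile up atypically; Erd\H{o}s's original argument handles it by an iterated sieve, applying Tur\'an--Kubilius to the strongly additive counts of prime factors below successive thresholds $x^t$. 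Since we only need $\overline{\rm d}({\rm M}_{I_x}) = o_{x\to\infty}(1)$ uniformly in $N$, even a soft version of this analysis closes out the proof.
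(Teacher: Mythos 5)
Your proposal is correct and is essentially what the paper does: the paper gives no proof of this proposition, importing it as a classical theorem of Erd\H{o}s (1936) with a pointer to Hall--Tenenbaum for sharper quantitative forms, and your reduction to the bound on $H(N,y,z)$ with $u=\log(z/y)/\log y=\varepsilon(x)/(1-\varepsilon(x))\to 0$ is exactly the standard formulation of that result. The heuristic sketch of why $H(N,y,z)=o(N)$ in this regime is accurate but is of course not a self-contained proof; since the paper itself treats the proposition as a black box, this is consistent with its treatment.
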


We prove a Besicovitch-type result for {\rm L}-primitive sets, notably with full upper density.

\begin{proposition}\label{prop:LBesicovitch}
We have $\sup_A \overline{\rm d}(A) = 1$ over {\rm L}-primitive sets $A$.
\end{proposition}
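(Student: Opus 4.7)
The plan is to show that for every $\delta>0$ one can construct an L-primitive set $A$ with $\overline{\rm d}(A)\ge 1-\delta$; combined with the trivial bound $\overline{\rm d}(A)\le 1$, this yields $\sup_A \overline{\rm d}(A)=1$.

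The construction proceeds across many scales. Fix a sequence $\varepsilon_k\to 0$ slowly. Using Proposition \ref{prop:ErdBes}, one can inductively choose $x_k\to\infty$ sufficiently fast so that the intervals $B_k:=(x_k^{1-\varepsilon_k},x_k]$ satisfy $\overline{\rm d}({\rm M}_{B_k})<\delta/2^k$, and moreover $|{\rm L}_{B_j}\cap [1,x_k]|<(\delta/2^j)x_k$ for every $j<k$ (the latter is possible since $x_k$ is chosen after $B_j$ is fixed and ${\rm L}_{B_j}\subseteq {\rm M}_{B_j}$ has upper density $<\delta/2^j$). Then define
\[
A_k := \{n\in B_k : P(n) > x_k^{\varepsilon_k}\} \,\setminus\, {\rm L}_{B_1\cup\cdots\cup B_{k-1}},
\]
and set $A:=\bigcup_{k\ge 1} A_k$.

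First I would verify L-primitivity of $A$. Within $A_k$: if $a,a'\in A_k$ with $a<a'$ and $a'\in {\rm L}_a$, then $a'=ba$ for some $b>1$ with $p(b)\ge P(a)>x_k^{\varepsilon_k}$, forcing $b\ge p(b)>x_k^{\varepsilon_k}$, so $a'=ba>x_k^{\varepsilon_k}\cdot x_k^{1-\varepsilon_k}=x_k$, contradicting $a'\le x_k$. Across scales $j<k$: the construction excises all L-multiples of $B_j$ from $A_k$, and since $A_j\subseteq B_j$ gives ${\rm L}_{A_j}\subseteq {\rm L}_{B_j}$, no element of $A_k$ is an L-multiple of any element of $A_j$. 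Next I would bound $|A_k|/x_k$ from below. The starting set $\{n\in B_k : P(n)>x_k^{\varepsilon_k}\}$ has size at least $x_k(1-x_k^{-\varepsilon_k}-\rho(1/\varepsilon_k)+o(1))$ by the classical smooth-number estimate $\Psi(x_k,x_k^{\varepsilon_k})\sim x_k\rho(1/\varepsilon_k)$, where $\rho$ is Dickman's function. The excised set ${\rm L}_{B_1\cup\cdots\cup B_{k-1}}\cap [1,x_k]$ has size at most $\sum_{j<k}(\delta/2^j)x_k<\delta x_k$ by subadditivity. Choosing $\varepsilon_k\to 0$ with $\varepsilon_k\log x_k\to\infty$ (e.g.\ $\varepsilon_k=1/\log\log x_k$) sends both $x_k^{-\varepsilon_k}$ and $\rho(1/\varepsilon_k)$ to $0$, giving $|A_k|/x_k\ge 1-\delta-o(1)$, and hence $\overline{\rm d}(A)\ge \limsup_k |A_k|/x_k \ge 1-\delta$.

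The main obstacle is arranging cross-scale L-primitivity without destroying the near-full density at each scale. Naively, elements at scale $k$ can be L-multiples of earlier-scale elements, and these collisions must be excised while keeping $|A_k|/x_k$ close to $1$. This delicate balance is precisely controlled by Proposition \ref{prop:ErdBes}: it ensures the sets ${\rm M}_{B_j}$ have arbitrarily small upper density along suitable scales $(x_j,\varepsilon_j)$, so the total density of cross-scale L-divisibility collisions stays below $\delta$, even though each individual interval $B_k$ occupies almost all of $[1,x_k]$.
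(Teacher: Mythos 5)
Your construction follows the same skeleton as the paper's proof: a union over widely separated scales of near-full blocks, a largest-prime-factor condition forcing L-primitivity within each block, excision of multiples of earlier blocks, and Erd\H{o}s's generalization of Besicovitch (Proposition \ref{prop:ErdBes}) to show the excised set is sparse. Your way of converting the upper-density bound into a counting bound at $x=x_k$ (by choosing $x_k$ after $B_j$ is fixed) is a clean substitute for the paper's periodicity argument, and the unverified direction of cross-scale L-primitivity ($a\notin{\rm L}_{a'}$ for $a$ at an earlier scale) is immediate from the ordering of the blocks.

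One step fails as literally written: you fix $\varepsilon_k\to 0$ ``slowly'' in advance and then claim that taking $x_k$ large enough forces $\overline{\rm d}({\rm M}_{B_k})<\delta/2^k$. For a \emph{fixed} $\varepsilon>0$, the quantity $\overline{\rm d}({\rm M}_{(x^{1-\varepsilon},x]})$ does \emph{not} tend to $0$ as $x\to\infty$; it is bounded below by a positive constant depending only on $\varepsilon$, uniformly in $x$ (this is exactly the Besicovitch phenomenon, made precise by Ford's theorem on the density of integers with a divisor in $(y,y^{1+u}]$). Proposition \ref{prop:ErdBes} only applies when $\varepsilon=\varepsilon(x)\to 0$, so to reach the target $\delta/2^k$ you must choose $\varepsilon_k$ small \emph{depending on} $\delta/2^k$ --- hence decaying rather fast, not slowly --- and only afterwards pick $x_k$ large. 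This is a harmless reordering of quantifiers: the rest of your argument needs only $\varepsilon_k\to 0$ and $\varepsilon_k\log x_k\to\infty$, both of which survive since $x_k$ is chosen after $\varepsilon_k$. It is worth noting how the paper sidesteps the issue: it uses blocks $(x/h,x]$ of \emph{fixed} ratio $h$, for which $\varepsilon(x)=\log h/\log x\to 0$ automatically, so Proposition \ref{prop:ErdBes} applies directly; the price is capturing only $1-1/h$ of each scale, recovered by letting $h\to\infty$ at the very end, whereas your longer blocks capture $1-o(1)$ per scale at the cost of the smooth-number removal and this more delicate invocation of Proposition \ref{prop:ErdBes}.
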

\begin{proof}
Take $h\in\Z_{>1}$, $\eps>0$ and let $S = \{n\in\N : P(n)\le h\}$ be the set of $h$-smooth numbers. For a sequence of indices $k_1,k_2,\ldots$ to be determined, define intervals $I_i = (h^{k_i-1}, h^{k_i}]$. Let $S_i:=I_i\setminus S$ and note for $a\in S_i$ and $n\in {\rm L}_a$ we have $n\ge P(a)a >h^{k_i}$, so $n\notin I_i\supset S_i$. In particular $a'\notin {\rm L}_a$ for distinct $a,a'\in S_i$, so each set $S_i$ is {\rm L}-primitive. Now define the {\rm L}-primitive set
\begin{align}
A = \bigcup_{j\ge1}\bigg(S_j\setminus\bigcup_{1\le i<j}{\rm M}_{I_i}\bigg).
\end{align}
Note for each fixed $h>1$ the set $S$ has zero density, so $|S\cap [1,x]| < \epsilon x$ for $x\ge x_{h,\epsilon}$ sufficiently large. Also by Proposition \ref{prop:ErdBes} we see $\overline{\rm d}({\rm M}_{(x/h,x]})\to0$ as $x\to\infty$. So for $k_i$ large enough, we may assume $\overline{\rm d}({\rm M}_{I_i}) < \eps/2^i$. 

For each $i$ the set of multiples ${\rm M}_{I_{i}}$ is a periodic set with period (dividing) $(h^{k_i})!$. So assuming 
$k_{i+1} \ge (h^{k_{i}})!$ the relative density of ${\rm M}_{I_i}$ inside $I_{i+1}$ is at most $2\overline{\rm d}({\rm M}_{I_i})$. Hence
\begin{align*}
|A\cap [1,h^{k_j}]| &\ge |I_j| - \big|S\cap [1,h^{k_j}]\big| - 2h^{k_j}\sum_{1\le i<j}\overline{\rm d}({\rm M}_{I_i})\\
&\ge (h^{k_j}-h^{k_j-1}) - \epsilon h^{k_j} - 2\eps h^{k_j}\sum_{i\ge1}2^{-i}.
\end{align*}
Thus dividing by $x=h^{k_j}$ we see $\overline{\rm d}(A)=\limsup_{x\to\infty}|A\cap [1,x]|/x \ge 1 - 1/h - 3\eps$. Taking $h\to\infty$ and $\eps\to0$ completes the proof.
\end{proof}

\subsection{The Erd\H{o}s {\rm L}-primitive set conjecture}

Sets of {\rm L}-multiples play a central role in our proof of Theorem \ref{thm:EPS}, as the mathematical structures arising from a probabilistic interpretation of \eqref{eq:sumofden},\footnote{A variant in \cite{ESS67}, a set $A$ `possesses property I' if there is no solution to $a'=ba$ for $a,a'\in A$ with $p(b)>P(a)$. This is similar to $A$ as {\rm L}-primitive, but the latter imposes the inclusive inequality $p(b)\ge P(a)$, which arises naturally from a probabilistic viewpoint. This inclusivity leads to key structural properties, notably the trichotomy in Lemma \ref{lem:trichot}.} and implicit in the original 1935 argument of Erd\H{o}s \cite{Erdos35}.\footnote{The author was also recently shown `prefix-free sets' in \cite{AKS}, which coincides with {\rm L}-primitive for sets of squarefree numbers.}
As such it is natural to pose the {\rm L}-primitive analogue of Conjecture \ref{conj:EPS}, namely that $f(A) \le f(\mathcal P)$ for all {\rm L}-primitive sets $A$. 

However, this conjecture turns out to be false.

\begin{proposition}\label{prop:Lprimsup}
We have
\begin{align}\label{eq:Lprimsupmax}
\sup_{A\textnormal{ L-primitive}} f(A) \ &= \ \sum_p\max\{f(p),e^\gamma{\rm d}({\rm L}_p)\},
\quad \text{and} \qquad
\lim_{x\to\infty}\sup_{\substack{A\subset [x,\infty)\\A\textnormal{ L-primitive}}} f(A) \ = \ e^\gamma.
\end{align}
\end{proposition}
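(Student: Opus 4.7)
Both equalities follow from a partition of $A$ by smallest prime factor paired with Lemma~\ref{lem:LMertfA}, matched by an explicit construction of approximating {\rm L}-primitive sets.

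\textit{Upper bounds.} Let $A$ be {\rm L}-primitive and decompose $A=\bigcup_p A_p$ with $A_p=A\cap {\rm L}_p$. If $p\in A$ then {\rm L}-primitivity forces $A_p=\{p\}$ and $f(A_p)=f(p)$; otherwise Lemma~\ref{lem:LMertfA} gives $f(A_p)<e^\gamma {\rm d}({\rm L}_p)$. Summing yields $f(A)\le\sum_p\max\{f(p),e^\gamma{\rm d}({\rm L}_p)\}$. For the second equality, $A\subset[x,\infty)$ excludes every prime $p<x$ from $A$, so $f(A_p)<e^\gamma{\rm d}({\rm L}_p)$ for all $p<x$; while $\sum_{p\ge x}\max\{f(p),e^\gamma{\rm d}({\rm L}_p)\}\le f(\mathcal{P}\cap[x,\infty))+e^\gamma\sum_{p\ge x}{\rm d}({\rm L}_p)=o(1)$. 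By Mertens' theorem, $\sum_{p<x}{\rm d}({\rm L}_p)=1-\prod_{p<x}(1-1/p)\to 1$, so the upper bound tends to $e^\gamma$.

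\textit{Construction.} For each prime $p$ and each $Y>p$, define
\begin{align*}
B_p^Y := \{qm : q\text{ prime},\; q>Y,\; m\in S_p^Y\},\qquad S_p^Y:=\{m : p\mid m,\; \text{all primes of } m \text{ lie in }[p,Y]\}.
\end{align*}
The set $B_p^Y$ is {\rm L}-primitive: for distinct $a=qm, a'=q'm'\in B_p^Y$ with $q=q'$, any divisibility $qm\mid q'm'$ forces $m\mid m'$ with $m'/m$ being $Y$-smooth, yet L-multiplication requires all its prime factors to be $\ge P(qm)=q>Y$; and if $q\ne q'$ then $q$ would need to divide the $Y$-smooth factor $m'$, impossible as $q>Y$. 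Moreover $B_p^Y$ {\rm L}-covers ${\rm L}_p$: computing via Euler products,
\begin{align*}
\sum_{a\in B_p^Y}{\rm d}({\rm L}_a) = \Big(\frac{1}{p-1}\prod_{p<p'\le Y}(1-1/p')^{-1}\Big)\prod_{p'\le Y}(1-1/p') = \frac{1}{p}\prod_{p'<p}(1-1/p') = {\rm d}({\rm L}_p).
\end{align*}
To compute $f(B_p^Y)$, expand $\log(qm)=\log q+\log m$ and apply partial summation from PNT to obtain $\sum_{q>Y}(q(\log q+\log m))^{-1}\sim (\log m)^{-1}\log(1+\log m/\log Y)$; combined with the Mertens-type asymptotic $\sum_{m\in S_p^Y}1/m\sim e^\gamma\log Y\cdot{\rm d}({\rm L}_p)$, one verifies that $f(B_p^Y)\to e^\gamma{\rm d}({\rm L}_p)$ as $Y\to\infty$.

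\textit{Conclusion and obstacle.} For the first equality, set $A^Y=\bigcup_p C_p^Y$ with $C_p^Y=B_p^Y$ if $f(p)<e^\gamma{\rm d}({\rm L}_p)$ and $C_p^Y=\{p\}$ otherwise. The union is {\rm L}-primitive because ${\rm L}_a\subset {\rm L}_{p(a)}$, so elements with different smallest prime factors cannot be {\rm L}-related; hence $f(A^Y)\to\sum_p\max\{f(p),e^\gamma{\rm d}({\rm L}_p)\}$ as $Y\to\infty$. For the second equality, take $A^x=\bigcup_{p\le x}B_p^x$; each $B_p^x\subset(x,\infty)$, and first restricting to $p\le P$ then letting $x\to\infty$ followed by $P\to\infty$ gives $f(A^x)\to e^\gamma\sum_p{\rm d}({\rm L}_p)=e^\gamma$. \emph{The main obstacle} lies in establishing the asymptotic $f(B_p^Y)\to e^\gamma{\rm d}({\rm L}_p)$ rigorously---this requires combining the Mertens-type estimate for $\sum_{m\in S_p^Y}1/m$ with a PNT-based partial summation for the prime sum, with error terms uniform enough in $p$ to execute the diagonal limit in the second equality.
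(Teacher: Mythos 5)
Your two upper-bound arguments are correct and essentially identical to the paper's: decompose $A=\bigcup_p A_p$ by smallest prime factor, note that {\rm L}-primitivity forces $A_p=\{p\}$ when $p\in A$, apply Lemma \ref{lem:LMertfA} otherwise, and use Mertens to see the bound tends to $e^\gamma$ when $A\subset[x,\infty)$. The {\rm L}-primitivity of $B_p^Y$ and the exact identity $\sum_{a\in B_p^Y}{\rm d}({\rm L}_a)={\rm d}({\rm L}_p)$ are also correct.

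The gap is the asymptotic $f(B_p^Y)\to e^\gamma{\rm d}({\rm L}_p)$, which you flag as the ``main obstacle''; it is in fact \emph{false}, so no amount of care with error terms will close it. Your own two displayed estimates already show why. They give
\begin{align*}
f(B_p^Y)\ \approx\ \sum_{m\in S_p^Y}\frac{1}{m\log m}\log\Big(1+\frac{\log m}{\log Y}\Big)\ =\ \frac{1}{\log Y}\sum_{m\in S_p^Y}\frac{1}{m}\cdot\frac{\log(1+u_m)}{u_m},\qquad u_m:=\frac{\log m}{\log Y},
\end{align*}
and to reach $e^\gamma{\rm d}({\rm L}_p)=\frac{1+o(1)}{\log Y}\sum_{m\in S_p^Y}\frac1m$ you would need $\frac{\log(1+u_m)}{u_m}\approx1$, i.e.\ $\log m=o(\log Y)$, for almost all of the mass of $\sum_m 1/m$. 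That mass is \emph{not} concentrated near $u=0$: under the $1/m$-weighting, $u_m$ equidistributes according to the Dickman density $\rho$ (this is precisely the phenomenon behind Proposition \ref{prop:Bovey}), so the limit is $e^\gamma{\rm d}({\rm L}_p)\cdot e^{-\gamma}\int_0^\infty\rho(u)\frac{\log(1+u)}{u}\,{\rm d}u$, with constant strictly below $1$ (already $\int_0^1\frac{\log(1+u)}{u}\,{\rm d}u=\pi^2/12<1=\int_0^1\rho(u)\,{\rm d}u$). Equivalently, a positive proportion of the density ${\rm d}({\rm L}_p)$ is carried by elements $a=qm$ with $\log m\asymp\log q$, for which $f(a)=\frac1{a\log a}$ falls short of $e^\gamma{\rm d}({\rm L}_a)$ by the bounded factor $\log q/\log(qm)$. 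Nor can you repair this by taking $q>Y^{T}$ with $T$ large: then $\sum_{q>Y^T}{\rm d}({\rm L}_q)$ shrinks by a factor $\asymp 1/T$ and the covering identity $\sum_a{\rm d}({\rm L}_a)={\rm d}({\rm L}_p)$ is destroyed. What the extremal construction genuinely requires is an {\rm L}-primitive family covering ${\rm L}_p$ up to density $o(1)$ \emph{all of whose elements $n$ satisfy $n\le P(n)^{1+v}$}; producing such a family is the real content of the lower bound, and the paper obtains it from Bovey's theorem via the sets $B(v)=\langle\beta(N_v)\rangle$ in Proposition \ref{prop:limsupLfAq}. Your upper bounds stand, but the construction must be replaced by one of this type.
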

Note the prime sum in \eqref{eq:Lprimsupmax} above is at least (and well-approximated by) $f(\mathcal P)-f(2) + e^\gamma/2 \approx 1.805$. In particular it exceeds $f(\mathcal P) \approx 1.636$. As such Proposition \ref{prop:Lprimsup}, along with Conjecture \ref{conj:BM} and related work in the literature, highlights how the Erd\H{o}s primitive set conjecture is quite fragile under certain seemingly natural directions of generalization.

We now proceed to set up the proof of Proposition \ref{prop:Lprimsup}. First, the trichotomy in Lemma \ref{lem:trichot} leads to the following.
\begin{lemma}\label{lem:genset}
Every set $S\subset \N$ has a unique {\rm L}-primitive subset $\langle S\rangle$ with ${\rm L}_{\langle S\rangle} = {\rm L}_S$. In particular $\langle S\rangle=S$ if $S$ is {\rm L}-primitive.
\end{lemma}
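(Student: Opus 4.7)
The plan is to define $\langle S\rangle$ as the set of \emph{L-minimal} elements of $S$, namely
\[
\langle S\rangle \;:=\; \{a\in S : a\notin {\rm L}_{a'} \text{ for any } a'\in S\setminus\{a\}\}.
\]
Then I would verify in turn that $\langle S\rangle$ is L-primitive, that ${\rm L}_{\langle S\rangle}={\rm L}_S$, and finally that any L-primitive subset of $S$ with the same set of L-multiples must coincide with $\langle S\rangle$.

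For L-primitivity, I would take distinct $a,a'\in\langle S\rangle$ and suppose for contradiction $a'\in {\rm L}_a$; then $a$ is an element of $S\setminus\{a'\}$ witnessing that $a'\notin \langle S\rangle$, a contradiction. For the L-multiple identity, the inclusion ${\rm L}_{\langle S\rangle}\subseteq {\rm L}_S$ is immediate. For the reverse direction, given $n\in {\rm L}_S$ I would pick $a\in S$ with $n\in {\rm L}_a$ and then show the finite set $T_a=\{a''\in S : a\in {\rm L}_{a''}\}$ contains an element of $\langle S\rangle$. Finiteness follows because every $a''\in T_a$ divides $a$. Choosing $a'\in T_a$ minimal under the usual integer ordering, any violation $a'\in {\rm L}_{a''}$ with $a''\in S$, $a''\neq a'$ would force $a''<a'$ (since $a'=ba''$ with $b>1$) together with $a\in {\rm L}_{a'}\subseteq {\rm L}_{a''}$, hence $a''\in T_a$, contradicting minimality. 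Thus $a'\in\langle S\rangle$ and $n\in {\rm L}_a\subseteq {\rm L}_{a'}\subseteq {\rm L}_{\langle S\rangle}$.

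For uniqueness, suppose $A\subseteq S$ is L-primitive with ${\rm L}_A={\rm L}_S$. I would show $A=\langle S\rangle$ by a two-step ``chase'' using the trichotomy of Lemma~\ref{lem:trichot}: for any $a\in A$, since $a\in {\rm L}_S={\rm L}_{\langle S\rangle}$ there exists $a'\in\langle S\rangle$ with ${\rm L}_a\subseteq {\rm L}_{a'}$, and since $a'\in {\rm L}_{\langle S\rangle}\subseteq {\rm L}_S={\rm L}_A$ there exists $a''\in A$ with ${\rm L}_{a'}\subseteq {\rm L}_{a''}$. Then ${\rm L}_a\subseteq {\rm L}_{a''}$ with $a,a''\in A$ L-primitive forces $a=a''$, so ${\rm L}_a={\rm L}_{a'}$, which (by comparing smallest elements) gives $a=a'\in \langle S\rangle$. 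Thus $A\subseteq \langle S\rangle$ and by symmetry equality holds. The final sentence ``$\langle S\rangle=S$ if $S$ is L-primitive'' is then immediate: $S$ itself is an L-primitive subset of $S$ with ${\rm L}_S={\rm L}_S$, so uniqueness yields $\langle S\rangle = S$.

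The argument is mostly structural and should not pose any genuine obstacle; the only place demanding care is the existence of an L-minimal element of $T_a$, but this is straightforward since $T_a$ is finite. The trichotomy Lemma~\ref{lem:trichot} is the key engine behind both existence and uniqueness, since it converts the a priori weak relation ``${\rm L}_a\cap {\rm L}_{a'}\neq\emptyset$'' into the linear comparability needed to chase back and forth between $A$ and $\langle S\rangle$.
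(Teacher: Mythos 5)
Your proposal is correct and follows essentially the same route as the paper: you define $\langle S\rangle$ as the set of L-minimal elements of $S$ (equivalent to the paper's $\{s\in S: s\notin {\rm L}_t\ \forall\, t<s,\ t\in S\}$, since a proper L-divisor is a proper divisor and hence smaller), and use the trichotomy of Lemma~\ref{lem:trichot} plus finiteness of the set of L-divisors exactly as the paper does. Your explicit verification of uniqueness via the back-and-forth chase is a detail the paper leaves implicit, and it is carried out correctly.
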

\begin{proof}
For any $s_1,s_2\in S$, by Lemma \ref{lem:trichot} either ${\rm L}_{s_1}\cap {\rm L}_{s_2}=\emptyset$ or ${\rm L}_{s_1}\subset {\rm L}_{s_2}$ (or vice versa). Thus each $s\in S$ has a (unique) smallest {\rm L}-divisor $s'\in S$, inducing a map $S\to S:s\mapsto s'$. We define $\langle S\rangle$ as the image of this map. Explicitly this is
\begin{align}
\langle S\rangle := \{s\in S : s\notin {\rm L}_t \;\forall\; t<s, t\in S\}.
\end{align}
By minimality ${\rm L}_{s_1}\cap {\rm L}_{s_2}=\emptyset$ for all $s_1,s_2\in \langle S\rangle$, so $\langle S\rangle$ is {\rm L}-primitive. Moreover ${\rm L}_S = \bigcup_{s\in S}{\rm L}_s = \bigcup_{s'\in \langle S\rangle}{\rm L}_{s'} = {\rm L}_{\langle S\rangle}$, where the latter union over $\langle S\rangle$ is disjoint by {\rm L}-primitivity. This completes the proof.
\end{proof}

Next take $v>0$, $n\in\Z_{>1}$, and consider the set $D_v(n)$ of prime divisors of $n$ whose induced {\rm L}-divisor is not smooth, i.e.
\begin{align}
D_v(n) = \Big\{ p\mid n \; :\, \prod_{q^e\| n, q< p} q^e \ \le \ p^v \Big\}.
\end{align}
We cite the following result of Bovey, based on earlier work of Erd\H{o}s \cite[\S1.2]{HTdivisor}.
\begin{proposition}[Bovey, 1977] \label{prop:Bovey}
For each $v>0$ there is a set $N_v\subset \N$ of full density with
\begin{align}\label{eq:Dickman}
\frac{|D_v(n)|}{\log\log n} \ \to \ e^{-\gamma}\int_0^{v} \rho(x)\dd{x}
\end{align}
as $n\to\infty$ on $N_v$. Here $\rho$ is the Dickman--de Bruijn function.
\end{proposition}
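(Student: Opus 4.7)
The plan is to establish the proposition via the classical mean-and-variance strategy of probabilistic number theory, in the spirit of Tur\'an--Kubilius. Write $|D_v(n)| = \sum_p X_p(n)$ as a sum of the nearly-additive indicators
\[
X_p(n) \;:=\; \1\big[p \mid n \text{ and } n_p \le p^v\big], \qquad n_p := \prod_{q^e\|n,\; q<p} q^e.
\]

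First I would compute the mean. For a fixed prime $p$, every $n\le x$ with $p\mid n$ and $n_p\le p^v$ decomposes uniquely as $n=ab$, where $a=n_p$ has all prime factors $<p$ with $a\le p^v$, and $b=n/n_p$ satisfies $p\mid b$ with $p(b)\ge p$. By Mertens' product theorem, the number of such $b\le y$ is asymptotic to $y/(p\,e^\gamma\log p)$, while Abel summation applied to Dickman's smooth-number estimate $\Psi(t,p)\sim t\rho(\log t/\log p)$ gives
\[
\sum_{\substack{a\le p^v\\ p(a)<p}}\frac{1}{a} \;\sim\; (\log p)\int_0^{v}\rho(u)\,\dd{u} \;=:\; I(v)\log p.
\]
Multiplying and summing over primes via $\sum_{p\le x}1/p\sim\log\log x$ yields $\frac{1}{x}\sum_{n\le x}|D_v(n)|\sim e^{-\gamma}I(v)\log\log x$, matching the target constant.

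Next I would bound the variance. Although the $X_p$ are not independent, their covariance sum can be controlled by a Tur\'an--Kubilius-style second-moment estimate, yielding
\[
\sum_{n\le x}\bigl(|D_v(n)|-e^{-\gamma}I(v)\log\log x\bigr)^2 \;\ll\; x\log\log x.
\]
Chebyshev's inequality then forces $|D_v(n)|/\log\log n\to e^{-\gamma}I(v)$ outside a set of density zero within each dyadic block, and summing the exceptional sets via a Borel--Cantelli-type argument produces the desired full-density set $N_v\subset\N$.

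The main obstacle is controlling the covariance of $X_p$ and $X_{p'}$ uniformly over primes $p<p'\le x$. When $p'$ is much larger than $p^{1/v}$ the two events decouple cleanly, since the primes in $[p,p')$ tend to saturate the smoothness constraint at $p'$ and make it approximately independent of the constraint at $p$. For closer primes the constraints $n_p\le p^v$ and $n_{p'}\le (p')^v$ become genuinely entangled, and one must either bound the contribution of such pairs directly using Dickman-type tail estimates, or, following Bovey, recast the correlated contribution in terms of a two-variable kernel where the $\rho$-integrals appear naturally and the cross-correlations cancel against the main terms.
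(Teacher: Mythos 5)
First, a point of reference: the paper does not prove Proposition \ref{prop:Bovey} --- it is quoted from Bovey (1977), with a pointer to Hall--Tenenbaum, \emph{Divisors}, \S 1.2 --- so your attempt is measured against the literature rather than against an argument in the text. Your overall strategy (first and second moments of $|D_v(n)|=\sum_p X_p(n)$, then Chebyshev on dyadic blocks) is the standard route to such normal-order statements, and your mean computation is correct: the factorization $n=ab$ with $a=n_p$, the Mertens evaluation $\frac1p\prod_{q<p}(1-\frac1q)\sim\frac{1}{e^\gamma p\log p}$, and the Abel-summation identity $\sum_{a\le p^v,\,P(a)<p}1/a\sim(\log p)\int_0^v\rho(u)\,du$ do combine to give $\frac1x\sum_{n\le x}|D_v(n)|\sim e^{-\gamma}I(v)\log\log x$ (the primes $p>x^{1/2}$ contribute only $O(1)$ to $\sum 1/p$, and the Dickman asymptotic is only needed for bounded $u\le v$, so the uniformity issues are benign).

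The genuine gap is the variance bound, which you assert but do not establish, and which is where essentially all of the work lies. This is not a routine Tur\'an--Kubilius application: $X_p(n)$ depends on the entire $p$-smooth part of $n$, not on $n$ modulo a bounded power of $p$, so $X_p$ and $X_{p'}$ are strongly correlated. Two points should be made explicit. (i) Pairs with $\log p'\asymp\log p$ are harmless, since trivially $E[X_pX_{p'}]\le 1/(pp')$ and $\sum_p\frac1p\sum_{p<p'\le p^C}\frac1{p'}\ll_C\log\log x=o((\log\log x)^2)$. (ii) For $\log p'/\log p$ large one must show $E[X_pX_{p'}]\le(1+o(1))E[X_p]E[X_{p'}]$, and here a crude bound fails by a \emph{constant} factor: discarding the constraint $n_p\le p^v$ and keeping only $p\mid n_{p'}\le(p')^v$ yields $E[X_pX_{p'}]\lesssim e^{-\gamma}I(v)/(pp')$, which exceeds $E[X_p]E[X_{p'}]=e^{-2\gamma}I(v)^2/(pp')$ and would leave the variance of order $(\log\log x)^2$. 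What is actually needed is a two-parameter estimate for integers supported on primes in $[p,p')$, namely that $\sum_{m\le(p')^v,\ p\le p(m),\ P(m)<p'}1/m\sim e^{-\gamma}I(v)\prod_{q<p}(1-\frac1q)^{-1}\prod_{q<p'}(1-\frac1q)$-type decoupling, so that the smoothness constraints at $p$ and at $p'$ become asymptotically independent. That estimate (a Dickman/Buchstab analysis on the interval $[p,p')$, with enough uniformity to sum over all pairs) is the substance of Bovey's theorem; until it is supplied, the proof is incomplete. With it, your Chebyshev and Borel--Cantelli conclusion goes through as described.
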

\begin{remark}
In probability, the righthand side of \eqref{eq:Dickman} is called the Dickman distribution.
\end{remark}

In particular, $|D_v(n)|\gg_v \log\log n$ for all $n\in N_v$. Now we may define a map $\beta: N_u\to \N$ sending $n$ to its {\rm L}-divisor $\beta(n)=p\prod_{q^e\| n, q< p} q^e$, for the largest prime $p\in D_v(n)$.

Define the {\rm L}-primitive generating set $B(v) := \langle \beta(N_v)\rangle$ as in Lemma \ref{lem:genset}. By construction ${\rm L}_{B(v)} = {\rm L}_{\beta(N_v)} \supset N_u$ has full density. Also, by definition of $\beta, D_v$,
\begin{align}\label{eq:P1u}
B(v)\subset \beta(N_v) \subset \{n\in\N : n \le P(n)^{1+v}\}.
\end{align}

We are now prepared to establish a local version of Proposition \ref{prop:Lprimsup}.
\begin{proposition}\label{prop:limsupLfAq}
For each prime $q$, we have
\begin{align*}
\lim_{y\to\infty}\sup_{\substack{A\subset[y,\infty)\\\textnormal{L-primitive }A\not\ni q}}f(A_q) \ = \ \sup_{\textnormal{L-primitive }A\not\ni q}f(A_q) \ = \ e^\gamma {\rm d}({\rm L}_q).
\end{align*}
\end{proposition}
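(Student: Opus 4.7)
The plan is to split the argument into a straightforward upper bound and a constructive lower bound. The upper bound follows directly from Lemma \ref{lem:LMertfA}: for any L-primitive $A$ with $q\notin A$, we have $f(A_q)<e^\gamma{\rm d}({\rm L}_q)$, uniformly in $y$, so both suprema in the statement are at most $e^\gamma{\rm d}({\rm L}_q)$.

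For the matching lower bound, I would fix small $v>0$ and large $y$ and construct an L-primitive set $A_{v,y}\subset{\rm L}_q\cap[y,\infty)$ on which Lemma \ref{lem:LMertfA} is nearly saturated. By Bovey's Proposition \ref{prop:Bovey}, $|D_v(n)|\to\infty$ along the full-density set $N_v$; since there are only finitely many primes below any fixed $y$, the maximum $\max D_v(n)$ must also tend to infinity along $N_v$, and hence
$$S_{v,y}:=\{n\in N_v\cap{\rm L}_q:\max D_v(n)\ge y\}$$
still has density ${\rm d}({\rm L}_q)$. For each $n\in S_{v,y}$, set
$$\beta_y(n):=p\prod_{\substack{r^e\| n\\ r<p}}r^e,\qquad p:=\max\bigl(D_v(n)\cap[y,\infty)\bigr).$$
Then $\beta_y(n)\in{\rm L}_q\cap[y,\infty)$, $\beta_y(n)\le P(\beta_y(n))^{1+v}$, and $n\in{\rm L}_{\beta_y(n)}$. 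Taking the L-primitive reduction $A_{v,y}:=\langle\beta_y(S_{v,y})\rangle$ via Lemma \ref{lem:genset} gives an L-primitive set with ${\rm L}_{A_{v,y}}\supset S_{v,y}$, and since $A_{v,y}\subset{\rm L}_q$, we obtain ${\rm d}({\rm L}_{A_{v,y}})={\rm d}({\rm L}_q)$.

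The estimate on $f(A_{v,y})$ then proceeds as in Lemma \ref{lem:fMertPnu}: the bound $a\le P(a)^{1+v}$ combined with \eqref{eq:muxdef} gives
$$\frac{1}{a\log a}\ \ge\ \frac{e^\gamma\,{\rm d}({\rm L}_a)}{(1+v)\mu_{P(a)}},$$
and since $a\ge y$ forces $P(a)\ge y^{1/(1+v)}$, Lemma \ref{lem:monotonic} yields $\mu_{P(a)}\le M_{y^{1/(1+v)}}$. Summing over $a\in A_{v,y}$ produces
$$f(A_{v,y})\ \ge\ \frac{e^\gamma\,{\rm d}({\rm L}_q)}{(1+v)\,M_{y^{1/(1+v)}}},$$
which tends to $e^\gamma{\rm d}({\rm L}_q)$ as $y\to\infty$ and then $v\to 0$, matching the upper bound. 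The main obstacle is the density claim ${\rm d}(S_{v,y})={\rm d}({\rm L}_q)$; it rests on Bovey's unbounded-growth conclusion $|D_v(n)|\to\infty$ forcing $\max D_v(n)\to\infty$, which rules out structurally constrained counterexamples such as $n=qp$ with $p<q^{1/v}$, whose only $D_v$-prime is $q$.
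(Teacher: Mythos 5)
Your proof is correct and follows essentially the same route as the paper: the upper bound is read off from Lemma \ref{lem:LMertfA}, and the lower bound is built from Bovey's Proposition \ref{prop:Bovey}, the map $\beta$, the reduction $\langle\cdot\rangle$ of Lemma \ref{lem:genset}, and the $1/(1+v)$ loss plus Mertens. The only difference is the technical device for pushing the construction into $[y,\infty)$: you truncate $D_v(n)$ at $y$ and select its largest prime $\ge y$, whereas the paper keeps $\beta$ unchanged and instead restricts $N_v$ to $[x,\infty)$ with $x>e^{e^{e^y}}$ so that $\beta(n)\ge |D_v(n)|\gg_v \log\log n > y$; both devices work.
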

\begin{proof}
By Lemma \ref{lem:LMertfA} we have $f(A_p) < e^\gamma {\rm d}({\rm L}_p)$ for all {\rm L}-primitive $A$ not containing $p$. It now suffices to provide {\rm L}-primitive sets $B\subset[y,\infty)$ with $f(B_q) \to e^\gamma {\rm d}({\rm L}_q)$ as $y\to\infty$.

Fix $v>0$. The {\rm L}-primitive set $B(v)$ in \eqref{eq:P1u} satisfies
\begin{align}\label{eq:fBuq}
f(B(v)_q) = \sum_{b\in B(v)_q}\frac{1}{b\log b} \ & \ge \ \frac{1}{1+v}\sum_{b\in B(v)_q}\frac{1}{b\log P(b)}.
\end{align}
Next, for $x>e^{e^{e^y}}$ we may assume $N_v\subset [x,\infty)$ and retain full density. Observe then $B(v)\subset [y,\infty)$ is our candidate  {\rm L}-primitive set. Indeed, for each $n\in N_v$ by construction $\beta(n)$ is divisible by all primes $q\in D_v(n)$, so $\beta(n)$ is composite with $\beta(n)\ge |D_v(n)| \gg_v \log\log n \ge \log\log\log x > y$ for each $b\in B(v)$, for $y$ sufficiently large. And note Mertens' product theorem gives
\begin{align*}
{\rm d}({\rm L}_b) = \frac{1}{b}\prod_{p< P(b)}\Big(1-\frac{1}{p}\Big) = \frac{e^{-\gamma}+o_y(1)}{b\log P(b)}.
\end{align*}
Plugging back into \eqref{eq:fBuq}, we obtain
\begin{align*}
f(B(v)_q) \ge \frac{e^\gamma+o_y(1)}{1+v}\sum_{b\in B(v)_q}{\rm d}({\rm L}_b).
\end{align*}
Recall ${\rm L}_{B(v)} = {\rm L}_{\beta(N_v)} \supset N_v$ has full density, which implies $({\rm L}_{B(v)})_q={\rm L}_{B(v)_q}$ has full relative density ${\rm d}({\rm L}_{B(v)_q}) = {\rm d}({\rm L}_q)$. Hence by Corollary \ref{cor:Lsplit} this latter sum is
\begin{align*}
\sum_{b\in B(v)_q}{\rm d}({\rm L}_b) = {\rm d}({\rm L}_{B(v)_q}) = {\rm d}({\rm L}_q).
\end{align*}
Thus taking $y\to\infty$ and $v\to0$ gives $f(B(v)_q)\to e^\gamma{\rm d}({\rm L}_q)$ as desired. 
\end{proof}

\begin{proof}[Proof of Proposition \ref{prop:Lprimsup}]
Take {\rm L}-primitive $A\subset [x,\infty)$, so $p\ge x$ for all $p\in A$. Then by Lemma \ref{lem:LMertfA},
\begin{align*}
f(A) = \sum_p f(A_p) & = \sum_{\substack{p< x\\\text{or }p\notin A}} f(A_p) + \sum_{p\in A, p\ge x}f(p)\\
& \le e^\gamma\sum_{\substack{p< x\\\text{or }p\notin A}} {\rm d}({\rm L}_p) + \sum_{p\in A, p\ge x}f(p)\\
& \le e^\gamma\sum_{p} {\rm d}({\rm L}_p) + \big(e^\gamma+o_x(1)\big)\sum_{p\ge x}{\rm d}({\rm L}_p) \ \le \ e^\gamma+o_x(1)
\end{align*}
by Mertens' theorem, and noting $\sum_p {\rm d}({\rm L}_p)=1$. Thus $\lim_x\sup_{A\subset[x,\infty)}f(A) \le e^\gamma$. Equality in the limsup holds for the choice of $B = \bigcup_{q} B(v)_q$ and taking $v\to0$ as in Proposition \ref{prop:limsupLfAq}. Observe such $B$ inherits {\rm L}-primitivity from the $B(v)_q$. Note in general, a union $B=\bigcup_q B_q$ is {\rm L}-primitive if each $B_q$ is {\rm L}-primitive. (By contrast $B=\bigcup_q B_q$ is not necessarily primitive even if each $B_q$ is primitive, e.g. $B=\{3,6\}$.)

Next, consider the primes $\mathcal Q = \{q : f(q) > e^\gamma {\rm d}({\rm L}_q)\} = \{q : 1/\log q > e^\gamma\prod_{p<q}(1-\frac{1}{p})\}$. By Lemma \ref{lem:LMertfA} $f(A_q) < e^\gamma {\rm d}({\rm L}_q)$ when $q\notin A$, so in general $f(A_q) < \max\{f(q),e^\gamma {\rm d}({\rm L}_q)\}$ for all {\rm L}-primitive $A$ and all primes $q$. Hence
\begin{align*}
f(A) = \sum_q f(A_q) < \sum_q\max\{f(q),e^\gamma {\rm d}({\rm L}_q)\} = f(\mathcal Q) + e^\gamma\big(1-{\rm d}({\rm L}_{\mathcal Q})).
\end{align*}
This bound is attained for the choice of $B' = \mathcal Q\cup\bigcup_{q\notin  \mathcal Q} B(v)_q$, and taking $v\to0$ as in Proposition \ref{prop:limsupLfAq}. Again $B'$ inherits {\rm L}-primitivity from the $B(v)_q$, as desired.
\end{proof}

\section{Deduction of Theorems \ref{thm:LDavenErdos}, \ref{thm:ESSLchain}}

Our study of sets of {\rm L}-multiples leads to Theorem \ref{thm:LDavenErdos}, refining Davenport--Erd\H{o}s. This in turn enables the proof of Theorem \ref{thm:ESSLchain}, by a modification of the argument in \cite[Theorem 2]{ESS66}, with greater care given to the constants involved.

To proceed we first establish some lemmas.

\begin{lemma}\label{lem:densitysumdLa}
For any {\rm L}-primitive $A\subset \N$, we have $\underline{{\rm d}}({\rm L}_A)\ge \sum_{a\in A}{\rm d}({\rm L}_a)$. Moreover if $\sum_{a\in A}1/a < \infty$ then the natural density ${\rm d}({\rm L}_A)$ exists and equals $\sum_{a\in A}{\rm d}({\rm L}_a)$.
\end{lemma}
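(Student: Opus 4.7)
Both halves of the lemma rest on the disjointness of the family $\{{\rm L}_a\}_{a\in A}$ provided by Corollary \ref{cor:Lsplit}, together with the explicit value ${\rm d}({\rm L}_a) = \frac{1}{a}\prod_{p<P(a)}(1-\frac{1}{p})$. The plan is to handle the lower bound by finite truncation (which needs no hypothesis), and then to upgrade to an equality using the summability hypothesis to bound the tail.

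For the lower bound, I would fix an arbitrary finite subset $A_0\subset A$. Since ${\rm L}_{A_0}=\bigsqcup_{a\in A_0}{\rm L}_a$ is a finite disjoint union and each ${\rm L}_a$ has a natural density, ${\rm d}({\rm L}_{A_0})$ exists and equals $\sum_{a\in A_0}{\rm d}({\rm L}_a)$. As ${\rm L}_{A_0}\subset {\rm L}_A$,
\begin{align*}
\underline{{\rm d}}({\rm L}_A)\ \ge\ \underline{{\rm d}}({\rm L}_{A_0})\ =\ \sum_{a\in A_0}{\rm d}({\rm L}_a).
\end{align*}
Taking the supremum over finite $A_0\subset A$ (or letting $A_0$ exhaust $A$ if the sum diverges) gives $\underline{{\rm d}}({\rm L}_A)\ge \sum_{a\in A}{\rm d}({\rm L}_a)$, as desired.

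For the upper bound, assume $\sum_{a\in A}1/a<\infty$ and fix $\eps>0$. By convergence, choose a finite $A_0\subset A$ with $\sum_{a\in A\setminus A_0}1/a<\eps$. Then ${\rm L}_A={\rm L}_{A_0}\cup {\rm L}_{A\setminus A_0}$, where ${\rm L}_{A_0}$ has density $\sum_{a\in A_0}{\rm d}({\rm L}_a)$ as above. For the tail, use the crude containment ${\rm L}_a\subset a\Z$, so $|{\rm L}_a\cap [1,x]|\le x/a$, and hence
\begin{align*}
\bigl|{\rm L}_{A\setminus A_0}\cap [1,x]\bigr| \ \le\ \sum_{a\in A\setminus A_0}\frac{x}{a} \ \le\ \eps\, x.
\end{align*}
This yields $\overline{{\rm d}}({\rm L}_{A\setminus A_0})\le\eps$, and combining the two pieces,
\begin{align*}
\overline{{\rm d}}({\rm L}_A)\ \le\ \sum_{a\in A_0}{\rm d}({\rm L}_a)+\eps\ \le\ \sum_{a\in A}{\rm d}({\rm L}_a)+\eps.
\end{align*}
Letting $\eps\to 0$ and combining with the lower bound forces equality $\overline{{\rm d}}({\rm L}_A)=\underline{{\rm d}}({\rm L}_A)=\sum_{a\in A}{\rm d}({\rm L}_a)$, so the natural density exists.

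The only subtle point is that natural density is not countably additive in general, so one cannot directly exchange the density and the infinite union; the summability hypothesis $\sum 1/a<\infty$ is exactly what is needed to tame the tail uniformly in $x$ via the trivial bound $|{\rm L}_a\cap [1,x]|\le x/a$. No other ingredient beyond Corollary \ref{cor:Lsplit} is required.
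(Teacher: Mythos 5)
Your proof is correct and follows essentially the same route as the paper: the lower bound via finite disjoint truncations of the union (using Corollary \ref{cor:Lsplit}), and the upper bound by controlling the tail with the trivial estimate $|{\rm L}_a\cap[1,x]|\le x/a$ together with the summability of $\sum_{a\in A}1/a$. No gaps.
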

\begin{proof}
For each $a\in A$ we have $\underline{{\rm d}}({\rm L}_a) = {\rm d}({\rm L}_a)$. So taking the lower density of the finite (disjoint) union $\bigcup_{a\in A, a\le x}{\rm L}_a \subset {\rm L}_A$, we have $\sum_{a\in A, a\le x}{\rm d}({\rm L}_a) \le \underline{{\rm d}}({\rm L}_A)$ for all $x>1$. Thus $\sum_{a\in A}{\rm d}({\rm L}_a) \le \underline{{\rm d}}({\rm L}_A)$. Moreover if $\sum_{a\in A}1/a < \infty$, then for all $y>1$
\begin{align*}
\frac{1}{x}\sum_{\substack{n\le x\\ n\in {\rm L}_{A\cap (y,\infty)}}}1 \le \frac{1}{x}\sum_{a\in A,a> y}\left\lfloor \frac{x}{a}\right\rfloor \le \sum_{a\in A,a> y}\frac{1}{a} = o_y(1).
\end{align*}
Thus $\overline{{\rm d}}({\rm L}_{A\cap (y,\infty)}) \to 0$ as $y\to\infty$, and so combining with ${\rm d}({\rm L}_{A\cap [1,y]})=\sum_{a\in A, a\le y}{\rm d}({\rm L}_a)$ completes the proof.
\end{proof}

The following lemma shows that sets of {\rm L}-multiples have a $\log$ density, refining Davenport--Erd\H{o}s' elementary proof for sets of (all) multiples \cite{DE51}. 
\begin{lemma}
For any {\rm L}-primitive $A\subset \N$, the $\log$ density $\delta({\rm L}_A)$ exists and equals $\sum_{a\in A}{\rm d}({\rm L}_a)$.
\end{lemma}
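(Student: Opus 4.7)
The plan is to prove $\delta({\rm L}_A) = s$, where
\[
s := \sum_{a \in A} {\rm d}({\rm L}_a) \in [0, 1]
\]
is finite because L-primitivity makes the $\{{\rm L}_a\}_{a \in A}$ pairwise disjoint subsets of $\N$. This reduces to matching the bounds $\underline{\delta}({\rm L}_A) \ge s$ and $\overline{\delta}({\rm L}_A) \le s$. The lower bound is essentially immediate from the previous lemma applied to the truncation $A^{\le y} := A \cap [1, y]$: since $A^{\le y}$ is finite, trivially $\sum_{a \in A^{\le y}} 1/a < \infty$, so the previous lemma supplies the natural density ${\rm d}({\rm L}_{A^{\le y}}) = s_y := \sum_{a \in A,\, a \le y} {\rm d}({\rm L}_a)$. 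Combining this with the standard chain $\underline{\rm d}(S) \le \underline{\delta}(S)$ and the inclusion ${\rm L}_A \supset {\rm L}_{A^{\le y}}$ gives $\underline{\delta}({\rm L}_A) \ge s_y$, and letting $y \to \infty$ yields $\underline{\delta}({\rm L}_A) \ge s$.

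For the upper bound, I would use L-primitivity to partition ${\rm L}_A = {\rm L}_{A^{\le y}} \sqcup {\rm L}_{A_{>y}}$ (with $A_{>y} := A \cap (y, \infty)$) and correspondingly decompose $R(x) := \sum_{n \in {\rm L}_A,\, n \le x} 1/n = R_1(x, y) + R_2(x, y)$. For fixed $y$, Abel summation from ${\rm d}({\rm L}_{A^{\le y}}) = s_y$ yields $R_1(x, y)/\log x \to s_y$ as $x \to \infty$. The remaining task is a tail bound of the form $R_2(x, y) \le C(s - s_y) \log x$ with an absolute constant $C$, for then $\overline{\delta}({\rm L}_A) \le s_y + C(s - s_y) \to s$ as $y \to \infty$ since $s - s_y \to 0$.

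The tail bound reduces to the uniform estimate
\[
T(x; a) := \sum_{n \in {\rm L}_a,\, n \le x} \frac{1}{n} \ \le \ C\, {\rm d}({\rm L}_a) \log x \quad \text{for all } a \le x,
\]
summed over $a \in A_{>y}$ using disjointness of the ${\rm L}_a$. Writing $T(x; a) = a^{-1} r(x/a, P(a))$ for $r(u, q) := \sum_{b \le u,\, p(b) \ge q} 1/b \le \prod_{q \le p \le u}(1 - 1/p)^{-1}$, I would apply the explicit Rosser--Schoenfeld estimates (essentially Lemma \ref{lem:monotonic}) to obtain $r(u, q) \le C \log u / \log q$ when $u \ge q$, and the trivial $r(u, q) = 1$ when $u < q$. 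Combined with ${\rm d}({\rm L}_a) = a^{-1} \prod_{p < P(a)}(1 - 1/p) \asymp 1/(a \log P(a))$ from Mertens, this gives the claim in the regime $x/a \ge P(a)$; in the complementary regime only $b = 1$ contributes so $T(x; a) = 1/a$, and the inequality $1/a \le C\, {\rm d}({\rm L}_a) \log x$ follows from $\log P(a) \le \log a \le \log x$.

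The main obstacle is verifying that the constant $C$ in the uniform bound is truly absolute---particularly for small $P(a)$, where Mertens' asymptotic has its largest error and $\mu_q = e^\gamma \log q \prod_{p < q}(1 - 1/p)$ deviates most from $1$. This is handled using the tabulated values and explicit lower bounds $m_q$ from Lemma \ref{lem:monotonic}, which give uniform control on $\mu_q$ down to the smallest primes.
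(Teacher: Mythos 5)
Your argument is correct, and both the overall skeleton (lower bound from the natural-density lemma plus $\underline{\rm d}\le\underline{\delta}$; upper bound by truncation plus a tail of size $O((s-s_y)\log x)$) and the conclusion match the paper. Where you genuinely diverge is in the truncation and the mechanism of the tail estimate. You truncate by size, $A^{\le y}=A\cap[1,y]$, and control the tail termwise via the uniform bound $\sum_{n\in{\rm L}_a,\,n\le x}1/n\le C\,{\rm d}({\rm L}_a)\log x$, which forces you to split into the regimes $x/a\ge P(a)$ and $x/a<P(a)$ and to verify an absolute constant via explicit Mertens bounds. The paper instead truncates by smoothness, $A^y=\{a\in A:P(a)\le y\}$, and restricts to the $x$-smooth part $L^x$ of ${\rm L}_A$; the point is that for this truncation the reciprocal sum is an \emph{exact} Euler-product identity, $\sum_{n\in L^x\cap{\rm L}_{A^y}}1/n={\rm d}({\rm L}_{A^y})\prod_{p\le x}(1-1/p)^{-1}$, so the tail $\sum_{n\in L^x\setminus{\rm L}_{A^y}}1/n$ comes out as $({\rm d}({\rm L}_{A^x})-{\rm d}({\rm L}_{A^y}))\prod_{p\le x}(1-1/p)^{-1}\ll(\log x)({\rm d}({\rm L}_{A^x})-{\rm d}({\rm L}_{A^y}))$ with no case analysis and only a soft appeal to Mertens. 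Your version is more elementary and local (it bounds each ${\rm L}_a$ separately, so it would survive without disjointness), at the cost of the explicit-constant bookkeeping you flag at the end; the paper's version buys cleanliness by choosing the truncation that makes the multiplicative structure of ${\rm L}$-multiples compute exactly. Both are complete proofs; just make sure in your write-up that the estimate $r(u,q)\le C\log u/\log q$ is stated for $u\ge q\ge2$ (so $\log u$ is bounded away from $0$) and that the $b=1$ term is accounted for, which your two-regime split already does.
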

\begin{proof}
In general $\underline{{\rm d}}(S) \; \le\; \underline{\delta}(S) \;\le \; \overline{\delta}(S) \; \le \; \overline{{\rm d}}(S)$ for any $S\subset \N$. So for $S={\rm L}_A$, by Lemma \ref{lem:densitysumdLa} it suffices to show
\begin{align}\label{eq:sumdLageDelt}
\sum_{a\in A}{\rm d}({\rm L}_a) \ \ge \ \overline{\delta}({\rm L}_A).
\end{align}
To this, for $y>1$ let $A^y = \{a\in A : P(a)\le y\}$ and $L^y = \{n\in {\rm L}_A : P(n)\le y\}$. Note $L^y \subset {\rm L}_{A^y}$. Also $\sum_{a\in A^y}\frac{1}{a} \le \prod_{p\le y}(1-\frac{1}{p})^{-1} = O_y(1)$, so by Lemma \ref{lem:densitysumdLa} ${\rm d}({\rm L}_{A^y})$ exists and equals $\sum_{a\in A^y}{\rm d}({\rm L}_a)$ for all $y>1$. In particular ${\rm d}({\rm L}_{A^y})\to \sum_{a\in A}{\rm d}({\rm L}_a)$ as $y\to\infty$.

Now observe each $n\in {\rm L}_{A^y}$ is a {\rm L}-multiple of a unique $a\in A^y$, so for $x\ge y>1$ we have
\begin{align}\label{eq:recipLx}
\sum_{n\in L^x\cap {\rm L}_{A^y}}\frac{1}{n} &= \sum_{a\in A^y}\frac{1}{a}\prod_{P(a)\le p\le x}(1-\tfrac{1}{p})^{-1} = \sum_{a\in A^y}\frac{1}{a}\prod_{p<P(a)}(1-\tfrac{1}{p})\prod_{p\le x}(1-\tfrac{1}{p})^{-1} \nonumber\\
&= {\rm d}({\rm L}_{A^y})\prod_{p\le x}(1-\tfrac{1}{p})^{-1}.
\end{align}
In particular for $x=y$ we have $\sum_{n\in L^x}\frac{1}{n}={\rm d}({\rm L}_{A^x})\prod_{p\le x}(1-\tfrac{1}{p})^{-1}$.

Then for all $x\ge y>1$, by \eqref{eq:recipLx} and Mertens' theorem
\begin{align}\label{eq:notrecipLy}
\sum_{n\in L^x \setminus {\rm L}_{A^y}}\frac{1}{n} 
&= \ \sum_{n\in L^x}\frac{1}{n} \ - \sum_{n\in L^x \cap {\rm L}_{A^y}}\frac{1}{n} \nonumber\\
&= \big({\rm d}({\rm L}_{A^x})-{\rm d}({\rm L}_{A^y})\big)\prod_{p\le x}(1-\tfrac{1}{p})^{-1}
\ \ll \ (\log x)\big({\rm d}({\rm L}_{A^x})-{\rm d}({\rm L}_{A^y})\big).
\end{align}
Recall the natural density ${\rm d}({\rm L}_{A^y})$ exists, in which case equals the log density $\delta({\rm L}_{A^y})$. Hence by \eqref{eq:notrecipLy}, for each $y>1$ the upper log density is
\begin{align}
\overline{\delta}({\rm L}_{A}) \ = \ \limsup_{x\to\infty}\frac{1}{\log x}\sum_{\substack{n\le x\\n\in {\rm L}_{A}}}\frac{1}{n} 
& \le \lim_{x\to\infty}\frac{1}{\log x}\sum_{\substack{n\le x\\n\in {\rm L}_{A^y}}}\frac{1}{n} \ + \ \limsup_{x\to\infty}\frac{1}{\log x}\sum_{n\in L^x \setminus {\rm L}_{A^y}}\frac{1}{n} \nonumber\\
& \ = \delta({\rm L}_{A^y}) \ + \ \lim_{x\to\infty}O\big({\rm d}({\rm L}_{A^x})-{\rm d}({\rm L}_{A^y})\big) \nonumber\\
& \ = {\rm d}({\rm L}_{A^y}) \ + \ O\Big(\sum_{a\in A}{\rm d}({\rm L}_a)-{\rm d}({\rm L}_{A^y})\Big).
\end{align}
Hence ${\rm d}({\rm L}_{A^y})\to \sum_{a\in A}{\rm d}({\rm L}_a)$ as $y\to\infty$ implies $\overline{\delta}({\rm L}_{A})\le \sum_{a\in A}{\rm d}({\rm L}_a)$, giving \eqref{eq:sumdLageDelt}.
\end{proof}

\vspace{.5em}
\noindent
{\bf Theorem \ref{thm:LDavenErdos}.} {\it If $\overline{\delta}(A)>0$, then $A$ contains an infinite {\rm L}-divisibility chain.}
\begin{proof}
We claim all such $A\subset \N$ contain an element $a\in A$ such that $A\cap {\rm L}_a$ has positive upper $\log$ density. (In other words, if $\overline{\delta}(A) > 0$ then there exists an element $a\in A$ such that $\overline{\delta}(A\cap {\rm L}_a)>0$.)

Assume this claim holds. Letting $A^1=A$, $a_1=a$, and for $i\ge1$ suppose $\overline{\delta}(A^i)>0$. By the claim there exists $a_i\in A^i$ such that $A^{i+1} := A^i\cap {\rm L}_{a_i}$ has positive upper $\log$ density. Hence by induction, we obtain an {\rm L}-divisibility chain $a_1,a_2,\cdots$, as desired.

Thus it remains to establish the above claim. For sake of contradiction, suppose $A\cap {\rm L}_a$ has zero $\log$ density for all $a\in A$. Next, for the {\rm L}-primitive generating set $B = \langle A\rangle$ by Lemma \ref{lem:densitysumdLa} $\delta({\rm L}_B)=\sum_{b\in B}{\rm d}({\rm L}_b)$ exists. Then for $z>1$ large enough we have $\delta({\rm L}_{B\cap(z,\infty)}) = \sum_{b\in B, b>z}{\rm d}({\rm L}_b)< \overline{\delta}(A)$. 
Now by assumption $\overline{\delta}(A\cap {\rm L}_b)=0$ for all $b\le z$, $b\in B$, and so 
\begin{align*}
\overline{\delta}(A) = \overline{\delta}(A\cap {\rm L}_{B\cap (z,\infty)}) \ \le \ \delta({\rm L}_{B\cap (z,\infty)})   < \overline{\delta}(A),
\end{align*}
a contradiction. Hence there exists $a\in A$ such that $A\cap {\rm L}_a$ has positive upper $\log$ density.
\end{proof}

\vspace{.5em}
\noindent
{\bf Theorem \ref{thm:ESSLchain}.} {\it If $\overline{\Delta}(A)>0$, then there is an infinite {\rm L}-divisibility chain $D\subset A$ of growth}
\begin{align*}
\limsup_{y\to\infty}\sum_{\substack{d\in D\\d\le y}}\frac{1}{\log\log y} \ \ge \ \frac{\overline{\Delta}(A)}{e^\gamma}.
\end{align*}
\begin{proof}
Take $\eps>0$. Without loss, we may suppose $A\subset [x_\eps,\infty)$ for $x_\eps$ sufficiently large, so that by Proposition \ref{prop:Lprimsup} $f(A')\le e^\gamma+\eps$ for all {\rm L}-primitive subsets $A'\subset A$. 

By definition of upper $\log\log$ density $\Delta:=\overline{\Delta}(A)>0$, there exists an unbounded sequence $(x_j)_{j=0}^\infty\subset\R$ such that for all $j\ge0$,
\begin{align}\label{eq:deltaAxj}
f(A\cap [1,x_j]) = \sum_{\substack{a\in A\\ a\le x_j}}\frac{1}{a\log a} > (\Delta-\eps)\log\log x_j.
\end{align}

Recall the {\rm L}-primitive generating set $\langle S\rangle=\{s\in S: s\notin {\rm L}_t \; \forall t<s,t\in S\}$ of a set $S\subset \N$ from Lemma \ref{lem:genset}. We partition $A = \bigcup_{i\ge0} A^i$ into a disjoint collection of {\rm L}-primitive subsets, where $A^0 = \langle A\rangle$ and inductively $A^l = \langle A\setminus \bigcup_{i<l} A^i\rangle$. By construction each $a=a_l\in A^l$ has a (finite) chain of {\rm L}-divisors $a_i\in A^i$ with ${\rm L}_{a_0}\supset \cdots \supset {\rm L}_{a_l}={\rm L}_a$. Also note $f(A^i)\le e^\gamma+\eps$ by assumption, so in particular $A^i$ has zero $\log\log$ density. Hence \eqref{eq:deltaAxj} implies each $A^i$ in $A=\bigcup_{i\ge0} A^i$ is non-empty. Next, define the subset $B = \bigcup_{j\ge0}B_j$ for
\begin{align*}
B_j \ : = \ A\cap [1,x_j]\setminus \bigcup_{1\le i<r_j} A^i, \qquad\text{where}\quad r_j := \frac{\Delta-2\eps}{e^\gamma+\eps}\log\log x_j.
\end{align*}
Note the sets $B_j$ are pairwise disjoint: Indeed, since $A = \bigcup_{i\ge0} A^i$, for each $j$ we have $A\cap [1,x_j]\subset \bigcup_{i<s_j} A^i$ for some finite $s_j$, as determined by $x_j$. Then since $(x_j)_j$ is unbounded, (passing to a subsequence) we have $r_{j+1} > s_j$ and so $A\cap [1,x_j]\subset \bigcup_{i<r_{j+1}}A^i$. Thus $B_j=A\cap [1,x_j]\setminus \bigcup_{i<r_j} A^i \; \subset\; \bigcup_{r_j\le i<r_{j+1}} A^i$ inherits disjointness from the $A^i$, as claimed.

Since $B = \bigcup_{j\ge0}B_j$ forms a disjoint union, for each $b\in B$ there is a unique index $J(b)$ such that $b\in B_{J(b)}$, that is,
\begin{align}
b \ \le \ x_{J(b)} \quad \text{and}\quad b\notin \bigcup_{i<r_{J(b)}} A^i.
\end{align}

In addition, $B$ has positive upper $\log\log$ density, since by definitions of $B$, $r_j$, and \eqref{eq:deltaAxj},
\begin{align*}
f(B\cap [1,x_j]) \ \ge \ f(B_j)
& \ \ge \ f(A\cap [1,x_j]) \ - \ \sum_{i<r_j}f(A^i)\\
& \ > \ (\Delta-\eps)\log\log x_j - r_j(e^\gamma+\eps) \ = \ \eps\log\log x_j.
\end{align*}
In particular $B$ has positive upper $\log$ density, so by Theorem \ref{thm:LDavenErdos} there exists an {\it infinite} {\rm L}--divisibility chain $D\subset B$. Since $D:=(d_k)_{k=0}^\infty$ is unbounded, (by passing to a subchain) we may assume $J(d_k) < J(d_{k+1})$ for all $k\ge0$. 
Recall each $a\in A^i$ is at the end of an ${\rm L}$-divisibility chain of length $i$. As $b\in B_{J(b)}$ and $B_j$ is contained in $\bigcup_{r_j\le i<r_{j+1}}A^i$, we infer each $d\in D\subset B$ is at the end of an ${\rm L}$-divisibility chain of length (at least) $r_{J(d)}$. Write it as $c_0^{(k)}\mid c_1^{(k)}\mid \cdots\mid c_{r_{J(d_k)}}^{(k)}=d_k$, with
\begin{align*}
{\rm L}_{c_0^{(k)}} \supset \cdots \supset {\rm L}_{d_k}.
\end{align*}
Now let $i_k$ be the least index such that $c_{i_k}^{(k)}>d_{k-1}$ and define
\begin{align*}
C := \{d_{k-1}< c_i^{(k)} \le d_k \ : \ k,i\ge0\} \; =\; \bigcup_{k\ge0}\{c_i^{(k)}  \ : \ i\in [i_k, r_{J(d_k)}]\} .
\end{align*}
We may assume $(r_j)_j$ grows fast enough, so that $\lfloor\eps\, r_{J(d_k)}\rfloor > d_{k-1}$.
Then the trivial bound $c_i^{(k)} > i$ implies $c_{\lfloor\eps \,r_{J(d_k)\rfloor}}^{(k)} > d_{k-1}$, and so $\lfloor\eps \,r_{J(d_k)}\rfloor \ge i_k$. Thus
\begin{align}\label{eq:supE}
\big|C\cap [1,x_{j(d_k)}]\big| 
\;\ge\; \big|C\cap [d_{k-1},d_k]\big|
\;\ge\; (1-\eps)r_{J(d_k)} = (1-\eps)\frac{\Delta-2\eps}{e^\gamma+\eps}\log\log x_{j(d_k)}.
\end{align}
Hence taking $\eps\to 0$ in \eqref{eq:supE} above gives $\limsup_{x\to\infty} \sum_{c\in C,c\le x}1/\log\log x \ge \Delta/e^\gamma$ as desired.

Finally note $C$ forms an infinite {\rm L}-divisibility chain: for each $k$ we have $c_j^{(k)}\in{\rm L}_{c_i^{(k)}}$ for all $i_k\le i<j$, in particular $d_k\in{\rm L}_{c_i^{(k)}}$.
Also $d_k\in{\rm L}_{d_{k-1}}$ since $D$ is an {\rm L}-divisibility chain, so there exist factorizations
\begin{align*}
d_k = gc_i^{(k)} = hd_{k-1},
\end{align*}
with $p(g)\ge P(c_i^{(k)})$ and $p(h)\ge P(d_{k-1})$. As $c_i^{(k)}> d_{k-1}$, we deduce $c_i^{(k)}\in{\rm L}_{d_{k-1}}$. Thus the $k$th and $(k-1)$th pieces of $C$ are linked together. Hence $C$ is indeed an {\rm L}-divisibility chain.
\end{proof}

\section{Closing remarks}

In this discussion, we attempt to sample just a few of the multitude of open questions that have quickly arisen in connection with the Erd\H{o}s primitive set conjecture. We have already described a few in the introduction, including Conjecture \ref{conj:ESS}, as well as whether $p=2$ is Erd\H{o}s strong. We also note recent work has studied variants of the problem in function fields $\mathbb{F}_q[x]$, see \cite{funcfield}, \cite{funcfield2}. In addition, it would be interesting to further extend the classical study of sets of (all) multiples and of primitive sets, e.g. see Hall \cite{Hsetmult} or Halberstam--Roth \cite[\S 5]{HalbRoth}, to sets of {\rm L}-multiples and {\rm L}-primitive sets.


We conclude with a related question of Banks and Martin, which offers a potential unified framework to view the results described in this article. For $k\ge1$, recall $\mathbb{N}_k=\{n : \Omega(n)=k\}$, in particular $\mathbb{N}_1=\mathcal{P}$. In 1993, Zhang \cite{zhang2} proved $f(\mathbb{N}_k)<f(\mathcal{P})$ for each $k>1$. Later Bayless, Kinlaw, and Klyve \cite{BKK} showed that $f(\N_2) > f(\N_3)$. Banks and Martin \cite{BM} predicted $f(\mathbb{N}_k)<f(\mathbb{N}_{k-1})$ for each $k>1$. 
In fact, they posed a vast generalization to Conjecture \ref{conj:EPS}.

\begin{conjecture}[odd Banks--Martin] \label{conj:BM}
Let $k\ge1$ and suppose $A$ is a primitive set with $\Omega(n)\ge k$ for all $n\in A$. Then for any set of odd primes $\mathcal Q$, we have
\begin{align}\label{eq:BMAQ}
f(A(\mathcal Q)) \ \le \ f\big(\mathbb{N}_k(\mathcal Q)\big).
\end{align}
Here $A(\mathcal Q)$ denotes the set of members of $A$ composed of primes in $\mathcal Q$.
\end{conjecture}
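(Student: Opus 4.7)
The case $k = 1$ is immediate from Theorem \ref{thm:Estrongodd}: since $\N_1(\mathcal{Q}) = \mathcal{Q}$ and every odd prime is Erd\H{o}s strong,
\begin{align*}
f(A(\mathcal{Q})) \;=\; \sum_{p\in\mathcal{Q}}f(A(\mathcal{Q})_p) \;\le\; \sum_{p\in\mathcal{Q}}f(A_p) \;\le\; \sum_{p\in\mathcal{Q}}f(p) \;=\; f(\mathcal{Q}).
\end{align*}

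For $k\ge 2$ the natural strategy is to partition $A(\mathcal{Q})$ according to the product of the $k$ smallest prime factors. For $a$ with $\Omega(a)\ge k$, let $n_a\in\N_k$ denote this product (with multiplicity); then $a/n_a$ has all prime factors $\ge P(n_a)$, so $a\in {\rm L}_{n_a}$, yielding the disjoint partition
\begin{align*}
\{a\in\Z_{>1}:\Omega(a)\ge k\} \;=\; \bigsqcup_{n\in\N_k}{\rm L}_n.
\end{align*}
Setting $A_{[n]} := A(\mathcal{Q})\cap {\rm L}_n$ for $n\in\N_k(\mathcal{Q})$, the conjecture would follow by showing every $n\in\N_k(\mathcal{Q})$ is \emph{Erd\H{o}s $k$-strong}, i.e.\ $f(A_{[n]}) \le f(n)$. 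When $n\in A$ this is trivial, since $A_{[n]}=\{n\}$ by primitivity; the nontrivial case is $n\notin A$, where $A_{[n]}$ is a primitive subset of ${\rm L}_n\setminus\{n\}$ of odd integers.

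The principal obstacle is establishing Erd\H{o}s $k$-strong for composite $n\in\N_k$. A direct application of Proposition \ref{prop:oddstrongdLn} only yields $f(A_{[n]}) \le b_{P(n)}\cdot(\log n/\log P(n))\cdot f(n)$, which for $n\in\N_k$ can exceed $f(n)$ by a factor as large as $k$. To recover the lost factor, one must exploit the sharper lower bound $a\ge nP(n)$ (whence $\log a\ge\log n+\log P(n)$) valid for every $a\in{\rm L}_n\setminus\{n\}$. The plan is to rerun the proof of Proposition \ref{prop:oddstrongdLn} with this improved estimate replacing $\log P(a)$ in Lemma \ref{lem:fMertPnu}, together with a generalized version of Proposition \ref{prop:maindensity} indexing ${\rm L}_{ac}$ by auxiliary $c$ whose prime factors span a broader range (since $a/n$, not $a$, now plays the role of the original argument). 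This should yield a density savings by a factor close to $\log P(n)/\log n$, precisely compensating the deficit.

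Alternatively, induction on $k$ suggests itself: writing $f(A(\mathcal{Q})_p) = \sum_{b\in A^{(p)}}1/(pb\log(pb))$ for $A^{(p)} := \{a/p : a\in A(\mathcal{Q}),\,p(a)=p\}$ (a primitive set with $\Omega\ge k-1$ contained in $\langle\mathcal{Q}_{\ge p}\rangle$), the step from $k$ to $k-1$ reduces the problem to the shifted inequality $\sum_{b\in A^{(p)}}\frac{1}{b\log(pb)} \le \sum_{m\in\N_{k-1}(\mathcal{Q}_{\ge p})}\frac{1}{m\log(pm)}$. By the integral identity $\frac{1}{b\log(pb)}=\int_1^\infty p^{1-t}b^{-t}\dd{t}$, this would follow from the pointwise bound $f_t(A^{(p)}) \le f_t(\N_{k-1}(\mathcal{Q}_{\ge p}))$ for each $t>1$---a Banks--Martin inequality of the form \eqref{eq:ftA} refined to odd primes and $\Omega\ge k-1$. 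Since the unrestricted analog of \eqref{eq:ftA} is known to fail for $t<\tau\approx 1.14$, any proof must genuinely use the exclusion of $p=2$. I regard this refined shifted $f_t$ inequality as the true core obstacle to Conjecture \ref{conj:BM}.
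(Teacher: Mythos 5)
The statement you are trying to prove is labelled as a \emph{conjecture} in the paper, and the paper does not prove it: the only thing established there is the special case $k=1$, which is deduced from Theorem \ref{thm:Estrongodd} in the closing remarks by exactly the computation in your first paragraph ($A(\mathcal Q)_p\subseteq A_p$, $f(A_p)\le f(p)$ for odd $p$, and $\mathbb{N}_1(\mathcal Q)=\mathcal Q$). So your opening paragraph is correct and coincides with the paper's argument, but everything from ``For $k\ge2$'' onward is a research programme rather than a proof, as you yourself acknowledge.

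Concretely, the gaps are these. Your reduction of the conjecture to the assertion that every $n\in\mathbb{N}_k(\mathcal Q)$ is ``Erd\H{o}s $k$-strong'' (i.e.\ $f(A\cap{\rm L}_n)\le f(n)$) is a valid sufficient condition, since $\{a:\Omega(a)\ge k\}=\bigsqcup_{n\in\mathbb{N}_k}{\rm L}_n$ is indeed a disjoint partition; but that assertion is itself unproven, and is if anything stronger than the conjecture. Your proposed repair --- rerunning Lemma \ref{lem:fMertPnu} with $\log a\ge\log n+\log P(n)$ and a widened version of Proposition \ref{prop:maindensity} --- is only described, not executed; in particular the claimed density savings of $\log P(n)/\log n$ would need a new disjointness lemma in the spirit of Lemma \ref{lem:disjointLac} whose exponent analysis you have not supplied, and it is not clear the constants (the analogues of $b_q<1$) survive when $n$ is composite. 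The inductive route fares no better: the shifted inequality $f_t(A^{(p)})\le f_t(\mathbb{N}_{k-1}(\mathcal Q_{\ge p}))$ for all $t>1$ is a strengthening of \eqref{eq:ftA}, which is known to fail for $t<\tau\approx1.14$ without the restriction to odd primes, and no mechanism is offered for how excluding $p=2$ rescues it. None of this is a criticism of your understanding --- you have correctly identified the obstacles --- but the general case $k\ge2$ remains open, in your write-up as in the literature.
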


Banks and Martin managed to show \eqref{eq:BMAQ} in the special case when the set of primes $\mathcal Q$ is quite sparse, namely $\sum_{p\in \mathcal Q}1/p <1.74$ (even when $2\in \mathcal Q$). We note the original formulation of Conjecture \ref{conj:BM} included the cases $2\in \mathcal Q$, but this turns out to be false. Indeed, when $\mathcal Q=\mathcal P$ it was shown $f(\mathbb{N}_k) > f(\mathbb{N}_6)$ for each $k\neq6$ \cite{Lalmost}. In fact, numerical evidence suggests that in fact the reverse holds $f(\mathbb{N}_k)>f(\mathbb{N}_{k-1})$ for $k>6$. Nevertheless for $\mathcal Q=\mathcal P\setminus \{2\}$, the desired inequality $f(\mathbb{N}_k(\mathcal Q))<f(\mathbb{N}_{k-1}(\mathcal Q))$ holds up to at least $k=20$.

Observe that Theorem \ref{thm:Estrongodd} implies Conjecture \ref{conj:BM} in the special case $k=1$. Indeed, if $p\notin \mathcal Q$ then $A(\mathcal Q)_p=\emptyset$, so we deduce $f(A(\mathcal Q)) = \sum_{p\in \mathcal Q}f(A(\mathcal Q)_p) \le \sum_{p\in \mathcal Q}f(p) = f(\mathcal Q)$.
Moreover, if true, Conjecture \ref{conj:BM} implies Conjecture \ref{conj:ESS} of Erd\H{o}s--S\'ark\"ozy--Szemer\'edi. This follows by an argument similar to Theorem \ref{thm:ESS}, and using $f(\N_k(\mathcal Q))\to 1/2$ as $k\to\infty$ when $\mathcal Q=\mathcal P\setminus \{2\}$, see \cite[Corollary 4.2]{Lalmost}. We leave this to the interested reader.

\section*{Acknowledgments}
The author expresses deep gratitude to Carl Pomerance for many discussions, and to Paul Kinlaw and James Maynard for careful readings and feedback. The author more broadly thanks Tsz Ho Chan and Scott Neville for engaging conversations over the years. The author also thanks Andr\'as S\'ark\"ozy for bringing \cite{MathErdosII} and references therein to his attention, as well as Michel Balazard, Fran\c{c}ois Morain for the reference \cite{ErdosLimoges} on the Erd\H{o}s primitive set conjecture. 
The author was supported by the Clarendon Scholarship and Balliol College at the University of Oxford, as well as the European Research Council (ERC) under the European Union’s Horizon 2020 research and innovation programme (grant agreement No. 851318).

\bibliographystyle{amsplain}

\end{document}